\DeclareMathAlphabet{\pazocal}{OMS}{zplm}{m}{n}
\numberwithin{equation}{section}
\newtheorem{theorem}{Theorem}[section]
\newtheorem{lemma}[theorem]{Lemma}
\newtheorem{assumption}[theorem]{Assumption}
\newtheorem{algorithm}[theorem]{Algorithm}
\numberwithin{equation}{section}
\theoremstyle{definition}
\newtheoremstyle{myremarkstyle}{}{}{}{}{\bfseries}{.}{ }{}
\theoremstyle{myremarkstyle}
\declaretheorem[name=Remark,qed=$\blacksquare$,numberlike=theorem]{remark}
\newcommand*{\intavg}{%
  \mint@l{-}{}%
}
\newcommand*{\mint@l}[2]{%
  \@ifnextchar\limits{%
    \mint@l{#1}%
  }{%
    \@ifnextchar\nolimits{%
      \mint@l{#1}%
    }{%
      \@ifnextchar\displaylimits{%
        \mint@l{#1}%
      }{%
        \mint@s{#2}{#1}%
      }%
    }%
  }%
}
\newcommand*{\mint@s}[2]{%
  \@ifnextchar_{%
    \mint@sub{#1}{#2}%
  }{%
    \@ifnextchar^{%
      \mint@sup{#1}{#2}%
    }{%
      \mint@{#1}{#2}{}{}%
    }%
  }%
}
\def\mint@sub#1#2_#3{%
  \@ifnextchar^{%
    \mint@sub@sup{#1}{#2}{#3}%
  }{%
    \mint@{#1}{#2}{#3}{}%
  }%
}
\def\mint@sup#1#2^#3{%
  \@ifnextchar_{%
    \mint@sub@sup{#1}{#2}{#3}%
  }{%
    \mint@{#1}{#2}{}{#3}%
  }%
}
\def\mint@sub@sup#1#2#3^#4{%
  \mint@{#1}{#2}{#3}{#4}%
}
\def\mint@sup@sub#1#2#3_#4{%
  \mint@{#1}{#2}{#4}{#3}%
}
\newcommand*{\mint@}[4]{%
  \mathop{}%
  \mkern-\thinmuskip
  \mathchoice{%
    \mint@@{#1}{#2}{#3}{#4}%
        \displaystyle\textstyle\scriptstyle
  }{%
    \mint@@{#1}{#2}{#3}{#4}%
        \textstyle\scriptstyle\scriptstyle
  }{%
    \mint@@{#1}{#2}{#3}{#4}%
        \scriptstyle\scriptscriptstyle\scriptscriptstyle
  }{%
    \mint@@{#1}{#2}{#3}{#4}%
        \scriptscriptstyle\scriptscriptstyle\scriptscriptstyle
  }%
  \mkern-\thinmuskip
  \int#1%
  \ifx\\#3\\\else_{#3}\fi
  \ifx\\#4\\\else^{#4}\fi  
}
\newcommand*{\mint@@}[7]{%
  \begingroup
    \sbox0{$#5\int\m@th$}%
    \sbox2{$#5\int_{}\m@th$}%
    \dimen2=\wd0 %
    \let\mint@limits=#1\relax
    \ifx\mint@limits\relax
      \sbox4{$#5\int_{\kern1sp}^{\kern1sp}\m@th$}%
      \ifdim\wd4>\wd2 %
        \let\mint@limits=\nolimits
      \else
        \let\mint@limits=\limits
      \fi
    \fi
    \ifx\mint@limits\displaylimits
      \ifx#5\displaystyle
        \let\mint@limits=\limits
      \fi
    \fi
    \ifx\mint@limits\limits
      \sbox0{$#7#3\m@th$}%
      \sbox2{$#7#4\m@th$}%
      \ifdim\wd0>\dimen2 %
        \dimen2=\wd0 %
      \fi
      \ifdim\wd2>\dimen2 %
        \dimen2=\wd2 %
      \fi
    \fi
    \rlap{%
      $#5%
        \vcenter{%
          \hbox to\dimen2{%
            \hss
            $#6{#2}\m@th$%
            \hss
          }%
        }%
      $%
    }%
  \endgroup
}
\def\XXint#1#2#3{{\setbox0=\hbox{$#1{#2#3}{\int}$ }
		\vcenter{\hbox{$#2#3$ }}\kern-.6\wd0}}
\renewcommand{\geq}{\geqslant}
\renewcommand{\leq}{\leqslant}
\renewcommand{\epsilon}{\varepsilon}
\renewcommand{\phi}{\varphi}
\newcommand{\nl}{{\mathbf n}}
\newcommand{\IP}{\mathbb{P}}
\newcommand{\Cont}{\EuScript{C}}
\newcommand{\R}{\mathbb{R}}
\newcommand{\N}{\mathbb{N}}
\newcommand{\bu}{{\bf u}}		
\newcommand{\n}{\mathbf{n}}
\newcommand{\f}{\mathbf{f}}
\newcommand{\IE}{\mathbb{E}}
\newcommand{\train}{\EuScript{S}}
\newcommand{\val}{\EuScript{V}}
\newcommand{\er}{\EuScript{E}}
\newcommand{\df}{\EuScript{D}}
\newcommand{\dom}{\mathbb{D}}
\newcommand{\bv}{\mathbf{v}}
\newcommand{\res}{\EuScript{R}}
\newcommand{\vtrain}{{\mathbf S}}
\newcommand{\bD}{\partial D}
\begin{document}

\date{\today}

\title{Estimates on the generalization error of Physics Informed Neural Networks (PINNs) for approximating PDEs.}

\author{Siddhartha Mishra \thanks{Seminar for Applied Mathematics (SAM), D-Math \newline
  ETH Z\"urich, R\"amistrasse 101, 
  Z\"urich-8092, Switzerland} and
  Roberto Molinaro \thanks{Seminar for Applied Mathematics (SAM), D-Math \newline
  ETH Z\"urich, R\"amistrasse 101, 
  Z\"urich-8092, Switzerland.}}

\maketitle
\begin{abstract}
Physics informed neural networks (PINNs) have recently been widely used for robust and accurate approximation of PDEs. We provide upper bounds on the generalization error of PINNs approximating solutions of the forward problem for PDEs. An abstract formalism is introduced and stability properties of the underlying PDE are leveraged to derive an estimate for the generalization error in terms of the training error and number of training samples. This abstract framework is illustrated with several examples of nonlinear PDEs. Numerical experiments, validating the proposed theory, are also presented. 
\end{abstract}

\section{Introduction}
Deep learning has emerged as a central tool in science and technology in the last few years. It is based on using deep artificial neural networks (DNNs), which are formed by composing many layers of affine transformations and scalar non-linearities. These deep neural networks have been applied with tremendous success \cite{DLnat} in a variety of tasks such as image and text classification, speech and natural language processing, robotics, game intelligence and protein folding \cite{Dfold}, among others. 

Partial differential equations (PDEs) model a vast array of natural and manmade phenomena in all areas of science and technology. Explicit solution formulas are only available for very specific types and examples of PDEs. Hence, numerical simulations are necessary for most practical applications featuring PDEs. A diverse set of methods for approximating PDEs numerically are available, such as finite difference, finite element, finite volume and spectral methods. Although very successful in practice, it is still challenging to numerically simulate problems such as Uncertainty quantification (UQ), multi-scale and multi-physics problems, Inverse and constrained optimization problems, PDEs in domains with very complex geometries and PDEs in very high dimensions. Could deep learning assist in the computations of these hitherto difficult to simulate problems involving PDEs? 

Deep learning techniques are being increasingly used in the numerical approximations of PDEs. A brief and very incomplete survey of this rapidly emerging literature follows: one approach in using deep neural networks (DNNs) for numerically approximating PDEs is based on explicit (or semi-implicit) representation formulas such as the Feynman-Kac formula for parabolic (and elliptic) PDEs, whose compositional structure is in turn utilized to facilitate approximation by DNNs. This approach is presented and analyzed for a variety of (parametric) elliptic and parabolic PDEs in \cite{E1,HEJ1,Jent1} and references therein, see a recent paper \cite{Pet1} for a similar approach to approximating linear transport equations with deep neural networks. 

Another strategy is to enhance existing numerical methods by adding deep learning inspired modules into them, for instance by learning free parameters of numerical schemes from data \cite{SM1,DR1} and references therein. 

A third approach consists of using deep neural networks to learn \emph{observables} or quantities of interest of the solutions of the underlying PDEs, from data. This approach has been described in \cite{LMR1,LMM1,MR1,LMPR1} in the context of uncertainty quantification and PDE constrained optimization and \cite{QUAT1} for model order reduction, among others. 

Finally, deep neural networks possess the so-called \emph{universal approximation property} \cite{Bar1,Hor1,Cy1}, namely any continuous, even measurable, function can be approximated by DNNs, see \cite{YAR1} for very precise descriptions of the required neural network architecture for functions with sufficient Sobolev regularity. Hence, it is natural to use deep neural networks as ansatz spaces for the solutions of PDEs, in particular by collocating the PDE residual at training points (see section \ref{sec:2}, Algorithm \ref{alg:PINN} for the detailed description of this approach). This approach was first proposed in \cite{Lag1,Lag2}. However, it has been revived and developed in significantly greater detail in the pioneering contributions of Karniadakis and collaborators, starting with \cite{KAR1,KAR2}. These authors have termed the underlying neural networks as \emph{Physics Informed Neural Networks} (PINNs) and we will continue to use this, by now, widely accepted terminology in this paper. There has been an explosive growth of papers that present algorithms with PINNs for various applications to both forward and inverse problems for PDEs and a very incomplete list of references include \cite{KAR4,KAR5,KAR6,KAR7} and references therein. Needless to say, PINNs have emerged as a very successful paradigm for approximating different aspects of solutions of PDEs. 

Why do PINNs approximate a wide variety of PDEs so well? Although many heuristic reasons have been proposed in some of the afore cited papers, particularly by highlighting the role played by (even small amounts of) data in driving the neural network towards the target and the role of the residual in changing training modes, there is very little rigorous justification of why PINNs work. With the exception of the very recent paper \cite{DAR1}, there are few rigorous bounds on the approximation error due to PINNs. In \cite{DAR1}, the authors prove consistency of PINNs by showing convergence, under reasonable hypothesis, to solutions of linear elliptic and parabolic PDEs as the number of training samples is increased. A detailed comparison between \cite{DAR1} and the current article is provided in section \ref{sec:6}. 

The main goal of this paper is to provide some rationale of why PINNs are so efficient at approximating solutions for the \emph{forward problem} for PDEs, under reasonable and verifiable hypothesis on the underlying PDE. The key issue that we wish to address is \emph{to understand the mechanisms by which minimizing the PDE residuals at collocation points, which is the main ingredient of the PINNs training algorithm, might lead to control (bounds) on the overall error}. To this end, we will present an abstract framework for PINNs that encompasses a wide variety of potential applications, including to nonlinear PDEs, and prove estimates on the so-called \emph{generalization error} i.e, the error of the neural network on predicting unseen data. This abstract estimate bounds the generalization error in terms of the underlying training error, number of training (collocation) points and stability bounds for the underlying PDE. Our generalization error estimate will show that the error due to approximating the underlying PDE with a trained PINN will be sufficiently low as long as 
\begin{itemize}
    \item The training error is low i.e, the PINN has been trained well. This error is computed and monitored during the training process. Hence, it is available \emph{a posteriori}.
    \item The number of training (collocation) points is sufficiently large. This number is determined by the error due to an underlying quadrature rule.
    \item The solution of the underlying PDE is stable (with respect to perturbations of inputs) in a very precise manner. For nonlinear PDEs, these stability bounds might require that the solutions of the underlying PDEs (and the PINNs) are sufficiently regular. \item Implicit constants that arise in the stability and quadrature error estimates, which depend on the underlying PINNs, need to be controlled in a suitable manner.
\end{itemize}
Thus, with the derived error estimate, we identify possible mechanisms by which PINNs are able to approximate PDEs so well and provide a firm mathematical foundation for approximations by PINNs. 

We will also provide three concrete examples to illustrate our abstract framework and error estimate, namely linear and semi-linear parabolic equations, one-dimensional scalar quasilinear parabolic (and hyperbolic) conservation laws and the incompressible Euler equations of fluid dynamics. The abstract error estimate is described in concrete terms for each example and numerical experiments are presented to support the proposed theory. The aim is to convince the reader of why PINNs, when correctly formulated, are successful at approximating the forward problem for PDEs numerically. 

The rest of the paper is organized as follows: in section \ref{sec:2}, we formulate PINNs for an abstract PDE and prove the estimate on the generalization error. In sections \ref{sec:3}, \ref{sec:4} and \ref{sec:5}, the abstract framework and error estimate is worked out in the concrete examples of semi-linear Parabolic PDEs, viscous scalar conservation laws and the incompressible Euler equations of fluid dynamics. 
\section{An abstract framework for Physics informed Neural Networks}
\label{sec:2}
\subsection{The underlying abstract PDE}
\label{sec:21}
Let $X,Y$ be separable Banach spaces with norms $\| \cdot \|_{X}$ and $\|\cdot\|_{Y}$, respectively. For definiteness, we set $Y = L^p(\dom;\R^m)$ and $X= W^{s,q}(\dom;\R^m)$, for $m \geq 1$, $1 \leq p,q < \infty$ and $s \geq 0$, with $\dom \subset \R^{\bar{d}}$, for some $\bar{d} \geq 1$. In particular, we will also consider space-time domains with $\dom = (0,T) \times D \subset \R^d$ with $d \geq 1$. In this case $\bar{d} = d +1$. Let $X^{\ast} \subset X$ and $Y^{\ast} \subset Y$ be closed subspaces with norms $\|\cdot \|_{X^{\ast}}$ and $\|\cdot\|_{Y^{\ast}}$, respectively.

We start by considering the following abstract formulation of our underlying PDE:
\begin{equation}
    \label{eq:pde}
    \df(\bu) = \f.
\end{equation}
Here, the \emph{differential operator} is a mapping, $\df: X^{\ast} \mapsto Y^{\ast}$ and the \emph{input} $\f \in Y^{\ast}$, such that 
\begin{equation}
\label{eq:assm1}
\begin{aligned}
&(H1): \quad \|\df(\bu)\|_{Y^{\ast}} < +\infty, \quad \forall~ \bu \in X^{\ast}, ~{\rm with}~\|\bu\|_{X^{\ast}} < +\infty. \\
&(H2):\quad \|\f\|_{Y^{\ast}} < +\infty. 
\end{aligned}
\end{equation}
Moreover, we assume that for all $\f \in Y^{\ast}$, there exists a unique $\bu \in X^{\ast}$ such that \eqref{eq:pde} holds. Furthermore, the solutions of the abstract PDE \eqref{eq:pde} satisfy the following stability bound, let $Z \subset X^{\ast} \subset X$ be a closed subspace with norm $\|\cdot\|_{Z}$.
\begin{assumption}For any $\bu,\bv \in Z$, the differential operator $\df$ satisfies
\begin{equation}
    \label{eq:assm2}
(H3):\quad    \|\bu - \bv\|_{X} \leq C_{pde}\left(\|\bu\|_Z, \|\bv\|_Z \right) \|\df(\bu) - \df(\bv)\|_{Y}.
\end{equation}
\end{assumption}
Here, the constant $C_{pde} > 0$ explicitly depends on $\|\bu\|_Z$ and $\|\bv\|_Z$.

As a first example of a PDE with solutions satisfying (H3) \eqref{eq:assm2}, consider the linear differential operator $\df: X \mapsto Y$ i.e $\df(\alpha \bu + \beta \bv) = \alpha \df(\bu) + \beta \df(\bv)$, for any $\alpha,\beta \in \R$. For simplicity, let $X^{\ast} = X$ and $Y^{\ast} = Y$. By the assumptions on the existence and uniqueness of the underlying linear PDE \eqref{eq:pde}, there exists an \emph{inverse} operator $\df^{-1}: Y \mapsto X$. Note that the assumption \eqref{eq:assm2} is satisfied if the inverse is bounded i.e, $\|\df^{-1}\| \leq C < +\infty$, with respect to the natural norm on linear operators from $Y$ to $X$. Thus, the assumption \eqref{eq:assm2} on stability boils down to the boundedness of the inverse operator for linear PDEs. Many well-known linear PDEs possess such bounded inverses \cite{DL1}. 

As a second example, we will consider a nonlinear PDE \eqref{eq:pde}, but with a well-defined linearization i.e, there exists an operator $\overline{\df}: X^{\ast} \mapsto Y^{\ast}$, such that 
\begin{equation}
    \label{eq:lin}
    \df(\bu) - \df(\bv) = \overline{\df}_{(\bu,\bv)}\left(\bu - \bv\right), \quad \forall \bu,\bv \in X^{\ast}.
\end{equation}
Again for simplicity, we will assume that $X^{\ast} = X$ and $Y^{\ast} = Y$. We further assume that the inverse of $\overline{\df}$ exists and is bounded in the following manner,
\begin{equation}
    \label{eq:lin1}
    \|\left(\overline{\df}_{(\bu,\bv)}\right)^{-1}\| \leq C\left(\|\bu\|_{X},\|\bv\|_{X}  \right) < +\infty, \quad \forall \bu,\bv \in X,
    \end{equation}
with the norm of $\overline{\df}^{-1}$ being an operator norm, induced by linear operators from $Y$ to $X$. Then a straightforward calculation shows that \eqref{eq:lin1} suffices to establish the stability bound \eqref{eq:assm2}. 

Further and more concrete examples of PDEs satisfying the above assumptions will be provided in the following sections.
\begin{remark}
We note that initial and boundary conditions are implicitly included in the PDE \eqref{eq:pde}. In particular, the regularity of the boundary of the underlying domain is implicit in our formulation and the boundary needs to be sufficiently regular in order to yield a regular enough solution. Moreover, by assuming that $Y = L^p(\dom)$, we are implicitly assuming some regularity for the solutions of the abstract PDE \eqref{eq:pde}. In particular, depending on the order of derivatives in the differential operator $\df$, we can expect that the solution $\bu$ satisfies the PDE \eqref{eq:pde} in a classical sense i.e, pointwise. 
\end{remark}
\subsection{Quadrature rules}
\label{sec:22}
In the following section, we need to consider approximating integrals of functions. Hence, we need an abstract formulation for quadrature. To this end, we consider a mapping $g: \dom \mapsto \R^m$, such that $g \in Z^{\ast} \subset Y^{\ast}$. We are interested in approximating the integral,
$$
\overline{g}:= \int\limits_{\dom} g(y) dy,
$$
with $dy$ denoting the $\bar{d}$-dimensional Lebesgue measure. In order to approximate the above integral by a quadrature rule, we need the quadrature points $y_{i} \in \dom$ for $1 \leq i \leq N$, for some $N \in \N$ as well as weights $w_i$, with $w_i \in \R_+$. Then a quadrature is defined by,
\begin{equation}
    \label{eq:quad}
    \overline{g}_N := \sum\limits_{i=1}^N w_i g(y_i),
\end{equation}
for weights $w_i$ and quadrature points $y_i$. We further assume that the quadrature error is bounded as,
\begin{equation}
    \label{eq:assm3}
    \left|\overline{g} - \overline{g}_N\right| \leq C_{quad}
    \left(\|g\|_{Z^{\ast}},\bar{d} \right) N^{-\alpha},
\end{equation}
for some $\alpha > 0$. 

As long as the domain $\dom$ is in reasonably low dimension i.e $\bar{d} \leq 4$, we can use standard (composite) Gauss quadrature rules on an underlying grid. In this case, the quadrature points and weights depend on the order of the quadrature rule \cite{SBbook} and the rate $\alpha$ depends on the regularity of the underlying integrand i.e, on the space $Z^{\ast}$.

On the other hand, these grid based quadrature rules are not suitable for domains in high dimensions. For moderately high dimensions i.e $4 \leq \bar{d} \approx 20$, we can use \emph{low discrepancy sequences}, such as the Sobol and Halton sequences, as quadrature points \cite{CAF1}. As long as the integrand $g$ is of bounded \emph{Hardy-Krause variation} \cite{owen}, the error in \eqref{eq:assm3} converges at a rate $(\log(N))^{\bar{d}}N^{-1}$. One can also employ sparse grids and Smolyak quadrature rules \cite{sgbook} in this regime. 

For problems in very high dimensions $\bar{d} \gg 20$, Monte-Carlo quadrature is the numerical integration method of choice \cite{CAF1}. In this case, the quadrature points are randomly chosen, independent and identically distributed (with respect to a scaled Lebesgue measure). The estimate \eqref{eq:assm3} holds in the root mean square (RMS) sense and the rate of convergence is $\alpha = \frac{1}{2}$.

\subsection{PINNs}
\label{sec:23}
In this section, we will describe physics-informed neural networks (PINNs). We start with a description of neural networks which form the basis of PINNs.
\subsubsection{Neural Networks.}
Given an input $y \in \dom$, a feedforward neural network (also termed as a multi-layer perceptron), shown in figure \ref{fig:1}, transforms it to an output, through a layer of units (neurons) which compose of either affine-linear maps between units (in successive layers) or scalar non-linear activation functions within units \cite{DLbook}, resulting in the representation,
\begin{equation}
\label{eq:ann1}
\bu_{\theta}(y) = C_K \circ\sigma \circ C_{K-1}\ldots \ldots \ldots \circ\sigma \circ C_2 \circ \sigma \circ C_1(y).
\end{equation} 
Here, $\circ$ refers to the composition of functions and $\sigma$ is a scalar (non-linear) activation function. A large variety of activation functions have been considered in the machine learning literature \cite{DLbook}. Popular choices for the activation function $\sigma$ in \eqref{eq:ann1} include the sigmoid function, the hyperbolic tangent function and the \emph{ReLU} function.

For any $1 \leq k \leq K$, we define
\begin{equation}
\label{eq:C}
C_k z_k = W_k z_k + b_k, \quad {\rm for} ~ W_k \in \R^{d_{k+1} \times d_k}, z_k \in \R^{d_k}, b_k \in \R^{d_{k+1}}.
\end{equation}
For consistency of notation, we set $d_1 = \bar{d}$ and $d_K = m$. 

Thus in the terminology of machine learning (see also figure \ref{fig:1}), our neural network \eqref{eq:ann1} consists of an input layer, an output layer and $(K-1)$ hidden layers for some $1 < K \in \N$. The $k$-th hidden layer (with $d_k$ neurons) is given an input vector $z_k \in \R^{d_k}$ and transforms it first by an affine linear map $C_k$ \eqref{eq:C} and then by a nonlinear (component wise) activation $\sigma$. A straightforward addition shows that our network contains $\left(\bar{d} + m + \sum\limits_{k=2}^{K-1} d_k\right)$ neurons. 
We also denote, 
\begin{equation}
\label{eq:theta}
\theta = \{W_k, b_k\},~ \theta_W = \{ W_k \}\quad \forall~ 1 \leq k \leq K,
\end{equation} 
to be the concatenated set of (tunable) weights for our network. It is straightforward to check that $\theta \in \Theta \subset \R^M$ with
\begin{equation}
\label{eq:ns}
M = \sum\limits_{k=1}^{K-1} (d_k +1) d_{k+1}.
\end{equation}
 \begin{figure}[htbp]
\centering
\includegraphics[width=8cm]{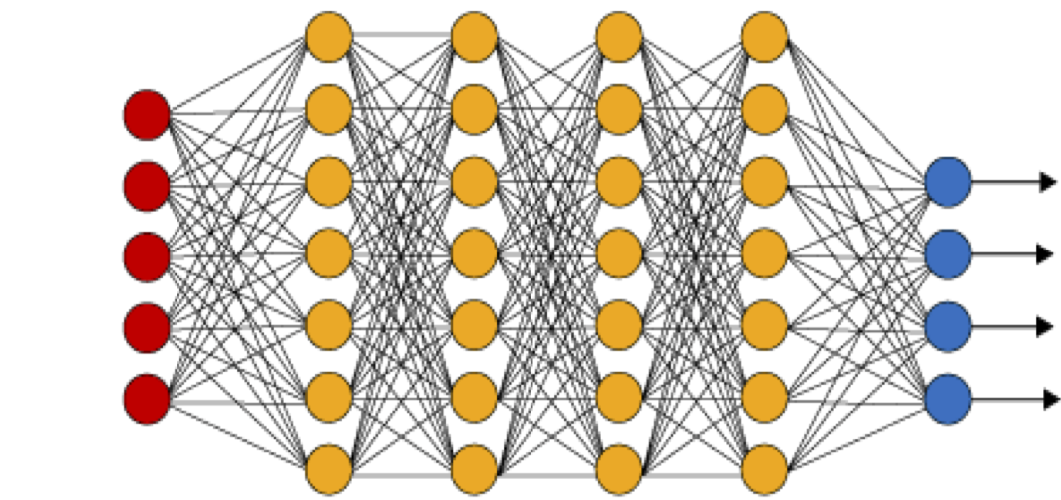}
\caption{An illustration of a (fully connected) deep neural network. The red neurons represent the inputs to the network and the blue neurons denote the output layer. They are
connected by hidden layers with yellow neurons. Each hidden unit (neuron) is connected by affine linear maps between units in different layers and then with nonlinear (scalar) activation functions within units.}
\label{fig:1}
\end{figure}
\subsubsection{Training PINNs: Loss functions and optimization}
The neural network $\bu_{\theta}$ \eqref{eq:ann1} depends on the tuning parameter $\theta \in \Theta$ of weights and biases. Within the standard paradigm of deep learning \cite{DLbook}, one \emph{trains} the network by finding tuning parameters $\theta$ such that the loss (error, mismatch, regret) between the neural network and the underlying target is minimized. Here, our target is the solution $\bu \in X^{\ast}$ of the abstract PDE \eqref{eq:pde} and we wish to find the tuning parameters $\theta$ such that the resulting neural network $\bu_{\theta}$ approximates $\bu$. 

Following standard practice of machine learning, one obtains training data $\bu(y)$, for all $y \in \train$, with training set $\train \subset \dom$ and then minimizes a loss function of the form $\sum\limits_{\train} \|\bu(y) - \bu_{\theta}(y)\|_{X}$ to find the neural network approximation for $\bu$. However, obtaining this training data requires possibly expensive numerical simulations of the underlying PDE \eqref{eq:pde}. In order to circumvent this issue, the authors of \cite{Lag1} suggest a different strategy. An abstract paraphrasing of this strategy runs as follows: we assume that for every $\theta \in \Theta$, the neural network $\bu_{\theta} \in X^{\ast}$ and $\|\bu_{\theta} \|_{X^{\ast}} < +\infty$. We define the following \emph{residual}:
\begin{equation}
    \label{eq:res1}
    \res_{\theta} = \res(\bu_\theta):= \df\left(\bu_{\theta}\right) - \f. 
\end{equation}
By assumptions (H1),(H2) \eqref{eq:assm1}, we see that $\res \in Y^{\ast}$ and $\|\res\|_{Y^{\ast}} < +\infty$ for all $\theta \in \Theta$. Note that $\res(\bu) = \df(\bu) - \f \equiv 0$, for the solution $\bu$ of the PDE \eqref{eq:pde}. Hence, the term \emph{residual} is justified for \eqref{eq:res1}. 

The strategy of PINNs, following \cite{Lag1}, is to minimize the \emph{residual} \eqref{eq:res1}, over the admissible set of tuning parameters $\theta \in \Theta$ i.e 
\begin{equation}
    \label{eq:pinn1}
    {\rm Find}~\theta^{\ast} \in \Theta:\quad \theta^{\ast} = {\rm arg}\min\limits_{\theta \in \Theta} \|\res_{\theta}\|_{Y}.
\end{equation}
Realizing that $Y = L^p(\dom)$ for some $1 \leq p < \infty$, we can equivalently minimize,
\begin{equation}
    \label{eq:pinn2}
    {\rm Find}~\theta^{\ast} \in \Theta:\quad \theta^{\ast} = {\rm arg}\min\limits_{\theta \in \Theta} \|\res_{\theta}\|^p_{L^p(\dom)} = {\rm arg}\min\limits_{\theta \in \Theta} \int\limits_{\dom} |\res_{\theta}(y)|^p dy. 
\end{equation}
As it will not be possible to evaluate the integral in \eqref{eq:pinn2} exactly, we need to approximate it numerically by a quadrature rule. To this end, we use the quadrature rules \eqref{eq:quad} discussed earlier and select the \emph{training set} $\train = \{y_n\}$ with $y_n \in \dom$ for all $1 \leq n \leq N$ as the quadrature points for the quadrature rule \eqref{eq:quad} and consider the following \emph{loss function}:
\begin{equation}
    \label{eq:lf1}
    J(\theta):= \sum\limits_{n=1}^N w_n |\res_{\theta}(y_n)|^p = \sum\limits_{n=1}^N w_n \left| \df(\bu_{\theta}(y_n)) - \f(y_n) \right|^p.
\end{equation}
It is common in machine learning \cite{DLbook} to regularize the minimization problem for the loss function i.e we seek to find,
\begin{equation}
\label{eq:lf2}
\theta^{\ast} = {\rm arg}\min\limits_{\theta \in \Theta} \left(J(\theta) + \lambda_{reg} J_{reg}(\theta) \right).
\end{equation}  
Here, 
\begin{equation}
\label{eq:reg}
    J_{reg}:\Theta \to \R
\end{equation} is a \emph{regularization} (penalization) term. A popular choice is to set  $J_{reg}(\theta) = \|\theta_W\|^q_q$ for either $q=1$ (to induce sparsity) or $q=2$. The parameter $0 \leq \lambda_{reg} \ll 1$ balances the regularization term with the actual loss $J$ \eqref{eq:lf1}. 

The above minimization problem amounts to finding a minimum of a possibly non-convex function over a subset of $\R^M$ for possibly very large $M$. We will follow standard practice in machine learning and solve this minimization problem approximately by either (first-order) stochastic gradient descent methods such as ADAM \cite{adam} or even higher-order optimization methods such as LBFGS \cite{lbfgs}. 

For notational simplicity, we denote the (approximate, local) minimum in \eqref{eq:lf2} as $\theta^{\ast}$ and the underlying deep neural network $\bu^{\ast}= \bu_{\theta^{\ast}}$ will be our physics-informed neural network (PINN) approximation for the solution $\bu$ of the PDE \eqref{eq:pde}.  

The proposed algorithm for computing this PINN is given below,
\begin{algorithm} 
\label{alg:PINN} {\bf Finding a physics informed neural network to approximate the solution of the PDE \eqref{eq:pde}}. 
\begin{itemize}
\item [{\bf Inputs}:] Underlying domain $\dom$, differential operator $\df$ and input source term $\f$ for the PDE \eqref{eq:pde}, quadrature points and weights for the quadrature rule \eqref{eq:quad}, non-convex gradient based optimization algorithms.
\item [{\bf Goal}:] Find PINN $\bu^{\ast}= \bu_{\theta^{\ast}}$ for approximating the PDE \eqref{eq:pde}. 
\item [{\bf Step $1$}:] Choose the training set $\train = \{y_n\}$ for $y_n \in \dom$, for all $1 \leq n \leq N$ such that $\{y_n\}$ are quadrature points for the underlying quadrature rule \eqref{eq:quad}.
\item [{\bf Step $2$}:] For an initial value of the weight vector $\overline{\theta} \in \Theta$, evaluate the neural network $\bu_{\overline{\theta}}$ \eqref{eq:ann1}, the PDE residual \eqref{eq:res1}, the loss function \eqref{eq:lf2} and its gradients to initialize the underlying optimization
algorithm.
\item [{\bf Step $3$}:] Run the optimization algorithm till an approximate local minimum $\theta^{\ast}$ of \eqref{eq:lf2} is reached. The map $\bu^{\ast} = \bu_{\theta^{\ast}}$ is the desired PINN for approximating the solution $\bu$ of the PDE \eqref{eq:pde}. 
\end{itemize}
\end{algorithm}
\begin{remark}
\label{rem:1}
The standard practice in machine learning is to approximate the solution $\bu$ of \eqref{eq:pde} from training data $\big(z_j,\bu(z_j)\big)$, for $z_j \in \dom$ and $1 \leq j \leq N_d$, then one would like to minimize the so-called \emph{data loss}:
\begin{equation}
    \label{eq:ld}
    J_{d}(\theta) := \frac{1}{N_d} \sum\limits_{j=1}^{N_d} |\bu(z_j) - \bu_{\theta}(z_j)|^p.
\end{equation}
Comparing the loss functions \eqref{eq:lf1} and \eqref{eq:ld} reveals the essence of PINNs, i.e, for PINNs, one does not necessarily need any training data for the solution $\bu$ of \eqref{eq:pde}, only the residual \eqref{eq:res1} needs to be evaluated, for which knowledge of the differential operator and inputs for \eqref{eq:pde} suffice. Here, we distinguish between initial and boundary data, which is implicitly included into the formulation of the PDE \eqref{eq:pde} and which are necessary for any numerical solution of \eqref{eq:pde}, from other types of data, for instance values of the solution $\bu$, from the interior of the domain $\dom$. Thus, the PINN in this formulation, which is closer in spirit to the original proposal of \cite{Lag1}, can be purely thought of as a numerical method for the PDE \eqref{eq:pde}. In this form, one can consider PINNs as an example of \emph{unsupervised learning} \cite{MLbook2}, as no explicit data is necessary. 

On the other hand, the authors of \cite{KAR2} and subsequent papers, have added further flexibility to PINNs by also including training data by augmenting the loss function \eqref{eq:lf2} with the data loss \eqref{eq:ld} and seeking neural networks with tuning parameters defined by,
\begin{equation}
\label{eq:lf3}
\theta^{\ast} = {\rm arg}\min\limits_{\theta \in \Theta} \left(J_d(\theta) + \lambda J(\theta) + \lambda_{reg} J_{reg}(\theta) \right),
\end{equation} 
with an additional hyperparameter $\lambda$ that balances the data loss with the residual. This paradigm has been very successful for inverse problems, where boundary and initial data may not be known \cite{KAR2,KAR4}. However, for the rest of the paper, we focus on the forward problem and only consider PINNs, trained with algorithm \ref{alg:PINN}, that minimize the residual based loss function \ref{eq:lf2}, without needing additional data. 
\end{remark}

\begin{remark}
Algorithm \ref{alg:PINN} requires the residual \eqref{eq:res1}, for the neural network $\bu_{\theta}$, to be evaluated pointwise for every training step. Hence, one needs the neural network to be sufficiently regular. Depending on the order of derivatives in $\df$ of \eqref{eq:pde}, this can be ensured by requiring sufficient smoothness for the activation function. Hence, the ReLU activation function, which is only Lipschitz continuous, might not be admissible in this framework. On the other hand, smooth activation functions such as logistic and $\tanh$ are always admissible. 
\end{remark}
\subsection{An abstract estimate on the generalization error}
In this section, we will estimate the error due to the PINN (generated by algorithm \ref{alg:PINN}) in approximating the solution $\bu$ of the PDE \eqref{eq:pde}. The relevant concept of error is the \emph{total error}, often referred to as the so-called \emph{generalization error} (see \cite{MLbook}):
    \begin{equation}
    \label{eq:egen}
    \er_G= \er_{G} (\theta^{\ast};\train) := \|\bu-\bu^{\ast}\|_{X}
\end{equation}
Clearly, the generalization error depends on the chosen training set $\train$ and the trained neural network with tuning parameters $\theta^{\ast}$, found by algorithm \ref{alg:PINN}. However, we will suppress this dependence due to notational convenience. We remark that the generalization error \eqref{eq:egen} is the error emanating from approximating the solution $\bu$ of \eqref{eq:pde} by the PINN $\bu^{\ast}$, generated by the algorithm \ref{alg:PINN}. 

Note that there is no computation of the generalization error during the training process. On the other hand, we monitor the so-called \emph{training error} given by,
\begin{equation}
    \label{eq:train}
    \er_T:= \left(\sum\limits_{n=1}^N w_n |\res_{\theta^{\ast}}(y_n)|^p \right)^{\frac{1}{p}} = J(\theta^{\ast})^{\frac{1}{p}}.
\end{equation}
Hence, the training error $\er_T$ can be readily computed, after training has been completed, from the loss function \eqref{eq:lf2}. The generalization error \eqref{eq:egen} can be estimated in terms of the training error in the following theorem.
\begin{theorem}
\label{thm:1}
Let $\bu \in Z \subset X^{\ast}$ be the unique solution of the PDE 
\eqref{eq:pde} and assume that the stability hypothesis \eqref{eq:assm2} holds. Let $\bu^{\ast} \in Z \subset X^{\ast}$ be the PINN generated by algorithm \ref{alg:PINN}, based on the training set $\train$ of quadrature points corresponding to the quadrature rule \eqref{eq:quad}. Further assume that the residual $\res_{\theta^{\ast}}$, defined in \eqref{eq:res1}, be such that $\res_{\theta^{\ast}} \in Z^{\ast}$ and the quadrature error satisfies \eqref{eq:assm3}. Then the following estimate on the generalization error holds,
\begin{equation}
    \label{eq:egenb}
    \er_G \leq C_{pde}\er_T + C_{pde}C_{quad}^{\frac{1}{p}}N^{-\frac{\alpha}{p}},
\end{equation}
with constants $C_{pde} = C_{pde}\left(\|\bu\|_{Z},\|\bu^{\ast}\|_{Z}\right)$ and $C_{quad} = C_{quad}\left(\left\||\res_{\theta^{\ast}}|^p\right\|\right)$ stemming from \eqref{eq:assm2} and \eqref{eq:assm3}, respectively. Note that these constants $C_{pde},C_{quad}$ depend on the underlying PINN $\bu^{\ast}$, which in turn can depend on the number of training points $N$. 
\end{theorem}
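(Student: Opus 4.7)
My plan is to chain together three ingredients: the stability hypothesis (H3) for the PDE, the definition of the residual, and the quadrature error estimate for the loss integral. The whole argument is short and essentially bookkeeping; the only subtlety is handling the $L^p$-norm carefully around the quadrature step.

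First I would apply assumption (H3) of \eqref{eq:assm2} with the pair $(\bu,\bu^{\ast})$, both of which lie in $Z$ by hypothesis. Since $\bu$ solves the PDE, $\df(\bu)=\f$, and by the definition of the residual \eqref{eq:res1}, $\df(\bu^{\ast})-\f=\res_{\theta^{\ast}}$. Therefore (H3) gives
\begin{equation*}
\er_G = \|\bu-\bu^{\ast}\|_X \;\leq\; C_{pde}\bigl(\|\bu\|_Z,\|\bu^{\ast}\|_Z\bigr)\,\|\res_{\theta^{\ast}}\|_Y,
\end{equation*}
so it remains to control $\|\res_{\theta^{\ast}}\|_Y=\bigl(\int_\dom |\res_{\theta^{\ast}}(y)|^p dy\bigr)^{1/p}$ by the training error plus a quadrature remainder.

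Next I would apply the quadrature estimate \eqref{eq:assm3} to the scalar integrand $g=|\res_{\theta^{\ast}}|^p$, which by hypothesis lies in $Z^{\ast}$ (so the constant $C_{quad}$ depends only on $\bigl\||\res_{\theta^{\ast}}|^p\bigr\|_{Z^{\ast}}$). Writing $\overline{g}=\int_\dom g\,dy$ and $\overline{g}_N=\sum_{n=1}^N w_n g(y_n)=\er_T^{\,p}$, the bound in \eqref{eq:assm3} gives $\overline{g}\leq \er_T^{\,p}+C_{quad}N^{-\alpha}$. Taking $p$-th roots and using the elementary subadditivity inequality $(a+b)^{1/p}\leq a^{1/p}+b^{1/p}$ for $a,b\geq 0$ and $p\geq 1$, I obtain $\|\res_{\theta^{\ast}}\|_Y\leq \er_T+C_{quad}^{1/p}N^{-\alpha/p}$.

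Combining this with the stability estimate from the first step yields exactly \eqref{eq:egenb}. The only step requiring any care is the transition from the $L^p$-norm to its $p$-th power and back, which forces the $1/p$ exponents on $C_{quad}$ and $N^{-\alpha}$; I expect no real obstacle here, but I would emphasize that the $C_{pde}$ and $C_{quad}$ constants depend implicitly on $\bu^{\ast}$ (through $\|\bu^{\ast}\|_Z$ and $\bigl\||\res_{\theta^{\ast}}|^p\bigr\|_{Z^{\ast}}$), and therefore on $N$, which is why the theorem is stated as an a posteriori bound rather than an a priori convergence rate.
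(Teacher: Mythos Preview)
Your proposal is correct and follows essentially the same route as the paper: apply the stability hypothesis (H3) to reduce to bounding $\|\res_{\theta^{\ast}}\|_Y$, then use the quadrature estimate \eqref{eq:assm3} on $g=|\res_{\theta^{\ast}}|^p$ and take $p$-th roots via subadditivity. Your write-up is in fact slightly more explicit than the paper's at the final step, where the paper passes from $\er_G^p\leq C_{pde}^p(\er_T^p+C_{quad}N^{-\alpha})$ to \eqref{eq:egenb} without naming the inequality $(a+b)^{1/p}\leq a^{1/p}+b^{1/p}$.
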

\begin{proof}
In the following, we denote $\res = \res_{\theta^{\ast}}$, the residual \eqref{eq:res1}, corresponding to the trained neural network $\bu^{\ast}$. As $\bu$ solves the PDE \eqref{eq:pde} and $\res$ is defined by \eqref{eq:res1}, we easily see that, 
\begin{equation}
\label{eq:pf1}
\res = \df(\bu^{\ast}) -\df(\bu).
\end{equation}
Hence, we can directly apply the stability bound \eqref{eq:assm2} to yield,
\begin{align*}
\er_G &=    \|\bu - \bu^{\ast}\|_{X} \quad ({\rm by}~\eqref{eq:egen}), \\
&\leq C_{pde} \|\df(\bu^{\ast}) - \df(\bu) \|_{Y}  \quad ({\rm by}~\eqref{eq:assm2}), \\
&\leq C_{pde} \|\res\|_{Y} \quad ({\rm by}~\eqref{eq:pf1}).
\end{align*}
By the fact that $Y = L^p(\dom)$, the definition of the training error \eqref{eq:train} and the quadrature rule \eqref{eq:quad}, we see that,
\begin{align*}
\|\res\|^p_{Y} \approx \left(\sum\limits_{n=1}^N w_n |\res_{\theta^{\ast}}|^p \right) = \er_T^p.
\end{align*}
Hence, the training error is a quadrature for the residual \eqref{eq:res1} and the resulting quadrature error, given by \eqref{eq:assm3} translates to,
\begin{align*}
   \|\res\|^p_{Y} \leq \er_T^p + C_{quad}N^{-\alpha}. 
\end{align*}
Therefore,
\begin{align*}
    \er_G^p &\leq C^p_{pde} \|\res\|^p_{Y} \\
    &\leq C^p_{pde} \left(\er_T^p + C_{quad}N^{-\alpha} \right)\\
    \Rightarrow \quad \er_G &\leq  C_{pde}\er_T + C_{pde}C_{quad}^{\frac{1}{p}}N^{-\frac{\alpha}{p}},
\end{align*}
which is the desired estimate \eqref{eq:egenb}.
\end{proof}
Several remarks about the above theorem are in order.
\begin{remark}
\label{rem:11}
An inspection of the estimate \eqref{eq:egenb} reveals that the generalization error for the PINN is small as long as the following hold,
\begin{itemize}
    \item The training error $\er_T << 1$ has to be sufficiently small. Note that we have no a priori control on the training error but can compute it \emph{a posteriori}. 
    \item The quadrature error depends on the number of quadrature (training) points $N$ as well as on the quadrature constant $C_{quad}$, which in turn, depends on the residual of the underlying PINN $\bu^{\ast}$ and indirectly, on the number of training points $N$. In particular, it might grow with increasing $N$. Thus, one might need to choose the number of quadrature points $N$ large enough such that $C_{quad}N^{-\alpha} << 1$. More pertinently, the constant $C_{quad}$ depends on the architecture of the underlying neural network, In particular, the evaluation of $C_{quad}$ depends on the details on the underlying PDE and quadrature rule and cannot be worked out in the abstract setup of Theorem \ref{thm:1}, see \cite{DRM1} for an example in the context of specific linear Kolmogorov PDEs. 
    \item The constant $C_{pde}$ that encodes the stability of the underlying PDE and depends on both the underlying exact solution $\bu$ as well as the trained PINN $\bu^{\ast}$ needs to be bounded. 
\end{itemize}
\end{remark}

\begin{remark}
\label{rem:2}
The main point of the error estimate \eqref{eq:egenb} is to relate the total (generalization) error to the training error. As mentioned before, it is not at all obvious that minimizing the PDE residual \eqref{eq:res1} can lead to any control on the generalization error \eqref{eq:egen}. The error estimate \eqref{eq:egenb} provides this connection by leveraging the stability of PDEs to derive an error estimate in terms of the PDE residual. The bound \eqref{eq:egenb} further estimates the PDE residual in terms of the training error and the quadrature error. 
\end{remark}
\begin{remark}
We observe that the error estimate \eqref{eq:egenb} holds for any function $\bu^{\ast}$, defined in terms of the residual \eqref{eq:res1}, as it only relies on the stability of the underlying PDE and on the accuracy of the underlying quadrature rule. We have not used the structure of neural networks in the proof of Theorem \ref{thm:1}, nor have we used any specific details of the training in algorithm \ref{alg:PINN}. In particular, this estimate \eqref{eq:egenb} holds for any neural network of the form \eqref{eq:ann1}, including those realized during the training process in the algorithm \ref{alg:PINN}. However, there is no guarantee that the training error $\er_T$ is small for such neural networks. On the other hand, one could expect that the training error for the trained PINN $\bu^{\ast}$ is small, which leads to a small generalization error as long as there is some control on the constants $C_{pde}$ and $C_{quad}$.  \end{remark}

\subsubsection{Case of random training points.}
\label{sec:mc}
The quadrature rule \eqref{eq:quad} (and the quadrature error \eqref{eq:assm3}) play an important role in the error estimate \eqref{eq:egenb}. In principle, as long as the underlying solution (and PINN) are sufficiently regular, one can use standard grid based (composite) Gauss quadrature rules and obtain low quadrature errors, resulting in a large $\alpha$ in the estimate \eqref{eq:egenb}. However, the constant $C_{quad}$, depends explicitly on the dimension $\bar{d}$ of $\dom$ and suffers from the so-called \emph{curse of dimensionality}. This can be alleviated for moderately high dimensions by using low-discrepancy sequences as the quadrature (and training) points. As long as the solution $\bu$ (and PINN $\bu^{\ast}$) are of bounded Hardy-Krause variation \cite{owen}, we know that $\alpha =1$ and the constant $C_{quad} = (\log(N))^{\bar{d}}$ for the resulting Quasi-Monte Carlo quadrature error \cite{CAF1}. Note that the logarithmic correction can be dominated as long $N \approx e^{\bar{d}}$. However, if the underlying dimension is very high, using low-discrepancy sequences might not be efficient, see \cite{MR1} for a similar observation in the context of standard neural networks. 

Hence, for problems in very high dimensions, one has to use random quadrature points as the resulting Monte Carlo quadrature does not suffer from the curse of dimensionality. However, as we use the quadrature points as training points for the PINN in algorithm \ref{alg:PINN}, we cannot directly use error estimates for Monte Carlo quadrature (central limit theorem) for approximating the integral in $\|\res_{\theta^{\ast}}\|^p_Y$ as $\res_{\theta^{\ast}}(y_n)$ in the training error \eqref{eq:train} could be \emph{correlated} (and are not independent) for different training points $y_n$. In general, advanced tools from statistical learning theory \cite{CS1,MLbook} such as Rademacher complexity, VC dimension etc are used to circumvent these correlations and in practice, lead to large overestimates on the generalization error \cite{AR1}. Here, we follow the approach of \cite{LMM1} and references therein and adopt a pragmatic framework in estimating the generalization error in terms of the training error. 

For simplicity of notation, we set the domain $\dom = [0,1]^{\bar{d}}$ and $Y=L^1(\dom)$ and start by recalling that the points in the training set $\EuScript{S}$ are chosen randomly from domain $\dom$, independently and identically distributed with the underlying Lebesgue measure $dy$. We will identify the training set $\train$ with the vector $\vtrain \in \dom^N$, defined by 
 $$
 \vtrain = \left[y_1,y_2,\ldots y_N \right],
 $$
 which is distributed according to the measure  $d\vtrain:= dy\otimes\cdot\cdot dy$ (a $N$-fold product Lebesgue measure). We also denote $(\Omega, \Sigma, \IP)$ as the underlying complete probability space from which random draws are made. Expectation with respect to this underlying probability measure $\IP$ is denoted as $\IE$
 
 Note that as the training set is randomly chosen, the 
 trained PINN $\bu^{\ast}$, explicitly depends on the training set $\vtrain$ i.e, $\bu^{\ast} = \bu^{\ast}(\vtrain)$. Consequently, the generalization and training errors now explicitly depend on the training set i.e,
 \begin{align*}
     \er_G(\vtrain)= \er_{G} (\theta^{\ast};\vtrain) := \|\bu-\bu^{\ast}(\vtrain)\|_{X}, \quad 
      \er_T(\vtrain)= \er_T(\theta^{\ast};\vtrain):= \frac{1}{N} \sum\limits_{n=1}^N  |\res_{\theta^{\ast}}(y_n;\vtrain)|
 \end{align*}
 
Next, we follow \cite{LMM1} and define the so-called \emph{cumulative} (average over training sets) generalization and training errors as,
 \begin{equation}
    \label{eq:ecgen}
    \bar{\er}_G = \int\limits_{\dom^N} \er_{G} (\vtrain) d\vtrain = \int\limits_{\dom^N} \|\bu - \bu^{\ast}(\vtrain) \|_X d\vtrain,
    \end{equation}
and
\begin{equation}
    \label{eq:ectrain}
    \bar{\er}_T =  \int\limits_{\dom^N} \er_{T} (\vtrain) d\vtrain.
\end{equation}
Note that the cumulative errors $\bar{\er}_{G,T}$ are deterministic quantities. In \cite{LMM1} and references therein, one followed standard practice in machine learning and also computed the so-called \emph{validation set},
\begin{equation}
    \label{eq:val}
    \val = \{z_j \in \dom, ~ 1\leq j \leq N, \quad z_j ~i.i.d~wrt~dy\}.
\end{equation}
Note that the number of validation points need not necessarily be set to the number of training points. We do so here for the sake of notational simplicity. 

The validation set is chosen before the start of the training process and is independent of the training sets. We can define the \emph{cumulative validation error} as, 
\begin{equation}
\label{eq:ecval}
\bar{\er}_V= \frac{1}{N}\int_{\dom^N}\sum\limits_{j=1}^N |\res_{\theta^{\ast}}(z_n;\vtrain) | d\vtrain.
\end{equation}
We observe that as the set $\val$ is drawn randomly from $\dom$ with underlying distribution $dy$, the cumulative validation error is a random quantity, $\bar{\er}_V = \bar{\er}_V(\omega)$ with $\omega \in \Omega$. We suppress this $\omega$-dependence for notational convenience. Finally, we introduce the \emph{validation gap}:
\begin{equation}
    \label{eq:vgap}
    \er_{TV}:= \IE\left(|\bar{\er}_{T} - \bar{\er}_V|\right):= \int\limits_{\Omega}|\bar{\er}_{T} - \bar{\er}_V(\omega)|d\IP(\omega) 
\end{equation}
Then, we have the following lemma on estimating the cumulative generalization error,
\begin{lemma}
\label{lem:11}
Let $\bu \in X^{\ast} \subset X$ be the unique solution of the PDE \eqref{eq:pde}. Let $\bu^{\ast} = \bu_{\theta^{\ast}}(\vtrain)$ be a PINN generated by algorithm \ref{alg:PINN}, with $N$ randomly chosen training points, identified by $\vtrain \in \dom^N$. Assume that there exists a constant $C_{pde}$ such that the stability bound \eqref{eq:assm2} is uniformly satisfied for PINNs, generated by the algorithm \ref{alg:PINN} for every training set $\train \subset \dom$, chosen independently and identically distributed with respect to the Lebesgue measure, then the cumulative generalization error \eqref{eq:ecgen} is bounded by, 
\begin{equation}
    \label{eq:ecgenb}
    \bar{\er}_G \leq C_{pde}\left( \bar{\er}_T + \er_{TV}
+ \frac{std(|\res_{\theta^{\ast}}|)}{\sqrt{N}}\right),
\end{equation}
with cumulative training error $\bar{\er}_T$ \eqref{eq:ectrain}, validation gap $\er_{TV}$ \eqref{eq:vgap} and 
\begin{equation}
\label{eq:std}
 std(|\res_{\theta^{\ast}}|):= \sqrt{\IE\left(\int_{\dom^N}|\res(z(\omega),\vtrain)| d\vtrain - \int_{\dom}\int_{\dom^N}|\res(z,\vtrain)| d\vtrain dz\right)^2}
\end{equation}
\end{lemma}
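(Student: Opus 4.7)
The plan is to combine the deterministic stability estimate $(H3)$ with a Monte Carlo variance bound for the residual, using the validation set as an auxiliary intermediate quantity. I want to write $\bar{\er}_G$ as a double integral of the residual, insert the validation error $\bar{\er}_V$ and the training error $\bar{\er}_T$ via the triangle inequality, and recognize the remaining piece as the Monte Carlo sampling error for estimating $\int_\dom|\res_{\theta^\ast}|\,dy$ by the sample mean over an independent validation draw.

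First, for each fixed $\vtrain \in \dom^N$, the trained PINN $\bu^\ast(\vtrain)$ and $\bu$ both lie in $Z$, so I apply $(H3)$ (with the uniform constant $C_{pde}$ assumed in the hypotheses) together with $\res_{\theta^\ast}(\cdot;\vtrain) = \df(\bu^\ast(\vtrain)) - \df(\bu)$, obtaining
\begin{equation*}
\|\bu - \bu^\ast(\vtrain)\|_X \,\leq\, C_{pde}\, \|\res_{\theta^\ast}(\cdot;\vtrain)\|_{L^1(\dom)}.
\end{equation*}
Integrating over $\vtrain$ yields $\bar{\er}_G \leq C_{pde}\,\bar{I}$ where $\bar{I} := \int_{\dom^N}\!\int_\dom |\res_{\theta^\ast}(y;\vtrain)|\,dy\,d\vtrain$.

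Next, I introduce the validation draw $z = (z_1,\ldots,z_N)$, whose law is a product of Lebesgue measures on $\dom$, independent of $\vtrain$. Since $\IE_z\!\bigl[\tfrac{1}{N}\sum_j |\res_{\theta^\ast}(z_j;\vtrain)|\bigr] = \int_\dom |\res_{\theta^\ast}(y;\vtrain)|\,dy$ for every $\vtrain$, Fubini gives $\bar{I} = \IE_z[\bar{\er}_V]$. I then insert $\bar{\er}_T$ and $\bar{\er}_V$ by a two-step triangle inequality:
\begin{equation*}
\bar{I} \,=\, \IE_z[\bar{\er}_V] \,=\, \IE_z[\bar{\er}_V - \bar{I}] + \IE_z[\bar{\er}_V - \bar{\er}_T] + \bar{\er}_T \,\leq\, \IE_z|\bar{\er}_V - \bar{I}| + \er_{TV} + \bar{\er}_T,
\end{equation*}
where the middle term is exactly the validation gap and the last is the cumulative training error.

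The remaining task, and the main technical step, is to control $\IE_z|\bar{\er}_V - \bar{I}|$ by $std(|\res_{\theta^\ast}|)/\sqrt{N}$. Setting $A(z) := \int_{\dom^N}|\res_{\theta^\ast}(z;\vtrain)|\,d\vtrain$, Fubini lets me rewrite $\bar{\er}_V = \tfrac{1}{N}\sum_{j=1}^N A(z_j)$ while $\bar{I} = \IE_z[A(z_1)]$. Since the $z_j$ are i.i.d., the random variables $A(z_j) - \bar{I}$ are independent mean-zero with common variance equal to $std(|\res_{\theta^\ast}|)^2$ in the sense of \eqref{eq:std}. Cauchy--Schwarz (or Jensen) then yields
\begin{equation*}
\IE_z|\bar{\er}_V - \bar{I}| \,\leq\, \bigl(\IE_z[(\bar{\er}_V - \bar{I})^2]\bigr)^{1/2} \,=\, \frac{std(|\res_{\theta^\ast}|)}{\sqrt{N}}.
\end{equation*}
Putting the three estimates together gives $\bar{I} \leq \bar{\er}_T + \er_{TV} + std(|\res_{\theta^\ast}|)/\sqrt{N}$, and multiplying by $C_{pde}$ delivers \eqref{eq:ecgenb}. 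The only real subtlety is verifying that the independence of the $z_j$'s from $\vtrain$ is preserved after the Fubini interchange; this is why I keep the conditioning on $\vtrain$ explicit until the very end.
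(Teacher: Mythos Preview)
Your argument is correct and follows essentially the same route as the paper: apply the stability bound $(H3)$ pointwise in $\vtrain$ and integrate, then add and subtract $\bar{\er}_V$ and $\bar{\er}_T$, bound the middle piece by the validation gap, and control the remaining term $\IE_z|\bar{\er}_V-\bar I|$ via the Monte Carlo variance identity for the i.i.d.\ sample mean $\bar{\er}_V=\tfrac1N\sum_j A(z_j)$. One minor remark: your displayed identity $\IE_z[\bar{\er}_V]=\IE_z[\bar{\er}_V-\bar I]+\IE_z[\bar{\er}_V-\bar{\er}_T]+\bar{\er}_T$ is correct only because $\IE_z[\bar{\er}_V-\bar I]=0$ exactly (so you are inserting a zero and then bounding it by something positive); it would read more naturally as the pointwise telescoping $\bar I=(\bar I-\bar{\er}_V)+(\bar{\er}_V-\bar{\er}_T)+\bar{\er}_T$ followed by $\IE_z$ and the triangle inequality, which is how the paper writes it.
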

\begin{proof}
We use the shorthand notation $\res(\vtrain) = \res_{\theta^{\ast}}(\vtrain)$ for the residual \eqref{eq:res1}. By definition of the cumulative generalization error \eqref{eq:ecgen}, we have
\begin{align*}
    \bar{\er}_G &= \IE\left(\|\bu - \bu^{\ast}\|_{X}\right) \\
    &\leq C_{pde}\IE\left(\|\res(\vtrain)\|_{1}\right), \quad {\rm by}~\eqref{eq:assm2}, \\
    &\leq C_{pde} \IE\left(|\IE\left(\|\res(\vtrain)\|_{1}\right)-\bar{\er}_V + \bar{\er}_{V} -\bar{\er}_T+\bar{\er}_T|        \right) \\
    &\leq C_{pde}\left(\bar{\er}_T + \er_{TV} + \sqrt{\IE\left(|\IE(\|\res(\vtrain)\|_{1}) - \bar{\er}_{V}|^2\right)}\right)
\end{align*}
As the residual evaluated on the validation points is independent, the term inside the square root can be easily estimated in terms of the Monte Carlo quadrature to obtain the desired estimate \eqref{eq:ecgenb}.
\end{proof}
We remark that the estimate on the cumulative generalization error \eqref{eq:ecgenb} requires the computation of the so-called validation set. This is standard practice in machine learning, albeit with a smaller validation set than the training set. Usually, the validation set is chosen within a \emph{supervised learning} paradigm to monitor possible over-fitting.

However, in this case, choosing a validation set serves a different purpose. Our motivation in choosing a validation set is to evaluate an error estimate of the form \eqref{eq:ecgenb}. Note that as no explicit data (other than initial data and boundary conditions) is involved in the PINNs algorithm \ref{alg:PINN}, we can readily and cheaply evaluate the residual to compute the validation error and the validation gap \eqref{eq:vgap}. 

The error estimates \eqref{eq:egenb} (and \eqref{eq:ecgenb}) are for the abstract PDE formulation \eqref{eq:pde} and are meant to illustrate possible mechanisms for low approximation errors with PINNs. A lot of information is implicit in them, for instance the exact form of the function spaces $X,X^{\ast}, Y, Y^{\ast},Z,Z^{\ast}$ and (initial) boundary conditions. These will be made explicit in each of the subsequent concrete examples.
\section{Semi-linear Parabolic equations}
\label{sec:3}
\subsection{The underlying PDEs}
Let $D \subset \R^d$ be a domain i.e, an open connected bounded set with a $C^k$ boundary $\bD$. We consider the following model semi-linear parabolic equation,
\begin{equation}
    \label{eq:heat}
    \begin{aligned}
    u_t &= \Delta u + f(u), \quad \forall x\in D~ t \in (0,T), \\
    u(x,0) &= \bar{u}(x), \quad \forall x \in D, \\
    u(x,t) &= 0, \quad \forall x\in \bD, ~ t \in (0,T). 
    \end{aligned}
\end{equation}
Here, $u_0 \in H^{\bar{s}}(D;\R)$ is the initial data, $u \in H^s(((0,T)\times D);\R)$ is the solution and $f:\R \times \R$ is the non-linear source (reaction) term. We assume that the non-linearity is globally Lipschitz i.e, there exists a constant $C_f$ (independent of $v,w$) such that 
\begin{equation}
    \label{eq:assf}
    |f(v) - f(w)| \leq C_f|v-w|, \quad v,w \in \R.
\end{equation}
In particular, the homogeneous linear heat equation with $f(u) \equiv 0$ and the linear source term $f(u) = c_f u$ are examples of \eqref{eq:heat}. Semilinear heat equations with globally Lipschitz nonlinearities arise in several models in biology and finance \cite{Jent1}. 

The existence, uniqueness and regularity of the semi-linear parabolic equations with Lipschitz non-linearities such as \eqref{eq:heat} can be found in classical textbooks such as \cite{Frdbook}. For our purposes here, we will choose $\bar{s}>k +d/2 + 1$ such that the initial data $\bar{u} \in C^k(D)$ and we obtain $u \in C^k([0,T] \times D)$, with $k\geq 2$ as the classical solution of the semi-linear parabolic equation \eqref{eq:heat}. 
\subsection{PINNs}
In order to complete the PINNs algorithm \ref{alg:PINN}, we need to specify the training set $\train$ and define the appropriate residual, which we do below. 
\subsubsection{Training set}
\label{sec:tset}
Let $D_T = (0,T) \times D$ be the space-time domain. As in section \ref{sec:2}, we will choose the training set $\train \subset [0,T] \times \bar{D}$, based on suitable quadrature points. We have to divide the training set into the following three parts,
\begin{itemize}
    \item Interior training points $\train_{int}=\{y_n\}$ for $1 \leq n \leq N_{int}$, with each $y_n = (x,t)_n \in D_T$. These points can be the quadrature points, corresponding to a suitable space-time grid-based composite Gauss quadrature rule as long $d \leq 3$ or correspond to low-discrepancy sequences for moderately high dimensions or randomly chosen points in very high dimensions. See figure \ref{fig:2} for an example of randomly chosen training points for $d=1$. 
    \item Spatial boundary training points $\train_{sb} = \{z_n\}$ for $1 \leq n \leq N_{sb}$ with each $z_n = (x,t)_n$ and each $x_n \in \bD$. Again the points can be chosen from a grid-based quadrature rule on the boundary, as low-discrepancy sequences or randomly. 
    \item Temporal boundary training points $\train_{tb} = \{x_n\}$, with $1 \leq n \leq N_{tb}$ and each $x_n \in D$, chosen either as grid points, low-discrepancy sequences or randomly chosen in $D$.
\end{itemize}
The full training set is $\train = \train_{int} \cup \train_{sb} \cup \train_{tb}$. An example for the full training set is shown in figure \ref{fig:2}. 
\subsubsection{Residuals}
In the algorithm \ref{alg:PINN} for generating PINNs, we need to define appropriate residuals. For the neural network $u_{\theta} \in C^k([0,T]\times \bar{D})$, with continuous extensions of the derivatives to the boundaries, defined by \eqref{eq:ann1}, with a smooth activation function such as $\sigma = \tanh$ and $\theta \in \Theta$ as the set of tuning parameters, we define the following residual,   
\begin{itemize}
    \item Interior Residual given by,
    \begin{equation}
        \label{eq:hres1}
        \res_{int,\theta}(x,t):= \partial_t u_{\theta}(x,t) - \Delta u_{\theta}(x,t) - f(u_{\theta}(x,t)).
    \end{equation}
Here $\Delta = \Delta_x$ is the spatial Laplacian. Note that the residual is well defined and $\res_{int,\theta} \in C^{k-2}([0,T]\times \bar{D})$ for every $\theta \in \Theta$. 
\item Spatial boundary Residual given by,
\begin{equation}
    \label{eq:hres2}
    \res_{sb,\theta}(x,t):= u_{\theta}(x,t), \quad \forall x \in \bD, ~ t \in (0,T]. 
\end{equation}
Given the fact that the neural network is smooth, this residual is well defined. 
\item Temporal boundary Residual given by,
\begin{equation}
    \label{eq:hres3}
    \res_{tb,\theta}(x):= u_{\theta}(x,0) - \bar{u}(x), \quad \forall x \in D. 
\end{equation}
Again this quantity is well-defined and $\res_{tb,\theta} \in C^k(D)$ as both the initial data and the neural network are smooth. 
\end{itemize}
\subsubsection{Loss function}
As in section \ref{sec:2}, we need a loss function to train the PINN. To this end, we set the following loss function,
\begin{equation}
    \label{eq:hlf}
    J(\theta):= \sum\limits_{n=1}^{N_{tb}} w^{tb}_n|\res_{tb,\theta}(x_n)|^2 + \sum\limits_{n=1}^{N_{sb}} w^{sb}_n|\res_{sb,\theta}(x_n,t_n)|^2 + \lambda \sum\limits_{n=1}^{N_{int}} w^{int}_n|\res_{int,\theta}(x_n,t_n)|^2 .
\end{equation}
Here the residuals are defined by \eqref{eq:hres3}, \eqref{eq:hres2}, \eqref{eq:hres1}, $w^{tb}_n$ are the $N_{tb}$ quadrature weights corresponding to the temporal boundary training points $\train_{tb}$, $w^{sb}_n$ are the $N_{sb}$ quadrature weights corresponding to the spatial boundary training points $\train_{sb}$ and $w^{int}_n$ are the $N_{int}$ quadrature weights corresponding to the interior training points $\train_{int}$. Furthermore, $\lambda$ is a hyperparameter for balancing the residuals, on account of the PDE and the initial and boundary data, respectively. 
\subsection{Estimate on the generalization error.}
The algorithm \ref{alg:PINN} for training a PINN to approximate the semilinear parabolic equation \eqref{eq:heat} is now completely specified and can be run to generate the required PINN, which we denote as $u^{\ast} = u_{\theta^{\ast}}$, where $\theta^{\ast} \in \Theta$ is the (approximate) minimizer of the optimization problem, corresponding to the loss function \eqref{eq:lf2}, \eqref{eq:hlf}. 

We are interested in estimating the generalization error \eqref{eq:egen} for this PINN. In this case, $X = L^2(D \times (0,T))$ and the generalization error is concretely defined as,
\begin{equation}
    \label{eq:hegen}
    \er_{G}:= \left(\int\limits_0^T \int\limits_D |u(x,t) - u^{\ast}(x,t)|^2 dx dt \right)^{\frac{1}{2}}.
\end{equation}
As for the abstract PDE \eqref{eq:pde}, we are going to estimate the generalization error in terms of the \emph{training error} that we define as,
\begin{equation}
    \label{eq:hetrain}
    \er^2_{T}:= \underbrace{\sum\limits_{n=1}^{N_{tb}} w^{tb}_n|\res_{tb,\theta^{\ast}}(x_n)|^2}_{(\er_T^{tb})^2} + \underbrace{\sum\limits_{n=1}^{N_{sb}} w^{sb}_n|\res_{sb,\theta^{\ast}}(x_n,t_n)|^2}_{(\er_T^{sb})^2} +  \lambda\underbrace{\sum\limits_{n=1}^{N_{int}} w^{int}_n|\res_{int,\theta^{\ast}}(x_n,t_n)|^2}_{(\er_T^{int})^2}.
\end{equation}
Note that the training error can be readily computed \emph{a posteriori} from the loss function \eqref{eq:lf2},\eqref{eq:hlf}. 

We also need the following assumptions on the quadrature error, analogous to \eqref{eq:assm2}. For any function $g \in C^k(D)$, the quadrature rule corresponding to quadrature weights $w^{tb}_n$ at points $x_n \in \train_{tb}$, with $1 \leq n \leq N_{tb}$, satisfies 
\begin{equation}
    \label{eq:hquad1}
    \left| \int\limits_{D} g(x) dx - \sum\limits_{n=1}^{N_{tb}} w^{tb}_n g(x_n)\right| \leq C^{tb}_{quad}(\|g\|_{C^k}) N_{tb}^{-\alpha_{tb}}.
\end{equation}
For any function $g \in C^k(\bD \times [0,T])$, the quadrature rule corresponding to quadrature weights $w^{sb}_n$ at points $(x_n,t_n) \in \train_{sb}$, with $1 \leq n \leq N_{sb}$, satisfies 
\begin{equation}
    \label{eq:hquad2}
    \left| \int\limits_0^T \int\limits_{\bD} g(x,t) ds(x) dt - \sum\limits_{n=1}^{N_{sb}} w^{sb}_n g(x_n,t_n)\right| \leq C^{sb}_{quad}(\|g\|_{C^k}) N_{sb}^{-\alpha_{sb}}.
\end{equation}
Finally, for any function $g \in C^\ell(D \times [0,T])$, the quadrature rule corresponding to quadrature weights $w^{int}_n$ at points $(x_n,t_n) \in \train_{int}$, with $1 \leq n \leq N_{int}$, satisfies 
\begin{equation}
    \label{eq:hquad3}
    \left| \int\limits_0^T \int\limits_{D} g(x,t) dx dt - \sum\limits_{n=1}^{N_{int}} w^{int}_n g(x_n,t_n)\right| \leq C^{int}_{quad}(\|g\|_{C^\ell}) N_{int}^{-\alpha_{int}}.
\end{equation}
In the above, $\alpha_{int},\alpha_{sb},\alpha_{tb} > 0$ and in principle, different order quadrature rules can be used. We estimate the generalization error for the PINN in the following,
\begin{theorem}
\label{thm:heat}
Let $u \in C^k(\bar{D} \times [0,T])$ be the unique classical solution of the semilinear parabolic euqation \eqref{eq:heat} with the source $f$ satisfying \eqref{eq:assf}. Let $u^{\ast} = u_{\theta^{\ast}}$ be a PINN generated by algorithm \ref{alg:PINN}, corresponding to loss function \eqref{eq:lf2}, \eqref{eq:hlf}. Then the generalization error \eqref{eq:hegen} can be estimated as, 
\begin{equation}
    \label{eq:hegenb}
    \er_G \leq C_1 \left(\er_T^{tb}+\er_T^{int}+C_2(\er_T^{sb})^{\frac{1}{2}} + (C_{quad}^{tb})^{\frac{1}{2}}N_{tb}^{-\frac{\alpha_{tb}}{2}} +  (C_{quad}^{int})^{\frac{1}{2}}N_{int}^{-\frac{\alpha_{int}}{2}} + C_2  (C_{quad}^{sb})^{\frac{1}{4}}N_{sb}^{-\frac{\alpha_{sb}}{4}}             \right),
\end{equation}
with constants given by,
\begin{equation}
    \label{eq:hct}
    \begin{aligned}
        C_1 &= \sqrt{T + (1+2C_f)T^2e^{(1+2C_f)T}}, \quad C_2 = \sqrt{C_{\bD}(u,u^{\ast})T^{\frac{1}{2}}}, \\
        C_{\bD} &= |\bD|^{\frac{1}{2}}\left(\|u\|_{C^1([0,T] \times \bD)} + \|u^{\ast}\|_{C^1([0,T] \times \bD)}\right), \\
\end{aligned}
\end{equation}
and $C_{quad}^{tb} = C_{quad}^{tb}(\|\res^2_{tb,\theta^{\ast}}\|_{C^k})$, $C_{quad}^{sb} = C_{quad}^{tb}(\|\res^2_{sb,\theta^{\ast}}\|_{C^k})$ and $C_{quad}^{int} = C_{quad}^{int}(\|\res^2_{int,\theta^{\ast}}\|_{C^{k-2}})$ are the constants defined by the quadrature error \eqref{eq:hquad1}, \eqref{eq:hquad2}, \eqref{eq:hquad3}, respectively. 
\end{theorem}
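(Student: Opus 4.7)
The plan is to work with the error function $\hat u = u^\ast - u$, derive the PDE it satisfies, and run a standard energy estimate. By linearity of the heat operator and the definitions of the three residuals, one checks that
\begin{equation*}
\hat u_t - \Delta \hat u = \res_{int,\theta^\ast} + \bigl(f(u^\ast) - f(u)\bigr) \ \text{on}\ D\times(0,T), \quad \hat u = \res_{sb,\theta^\ast}\ \text{on}\ \bD\times(0,T),\quad \hat u(\cdot,0) = \res_{tb,\theta^\ast}\ \text{on}\ D.
\end{equation*}
Define $E(t) := \|\hat u(\cdot,t)\|_{L^2(D)}^2$. Multiplying the PDE by $\hat u$, integrating over $D$, and using integration by parts gives
\begin{equation*}
\tfrac12 E'(t) = -\int_D |\nabla \hat u|^2\,dx + \int_{\bD} \res_{sb,\theta^\ast}\, \partial_n \hat u\, ds + \int_D \hat u\, \res_{int,\theta^\ast}\,dx + \int_D \hat u \bigl(f(u^\ast)-f(u)\bigr)\,dx.
\end{equation*}

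Next I would discard the nonpositive $-\|\nabla\hat u\|_{L^2}^2$ term, apply Young's inequality to the interior-residual product, use the Lipschitz hypothesis \eqref{eq:assf} to dominate the nonlinearity by $C_f E(t)$, and control the boundary term by pulling $\partial_n \hat u$ out in $L^\infty(\bD)$, bounded by $\|u\|_{C^1} + \|u^\ast\|_{C^1}$ since $\hat u = u^\ast - u$. This produces
\begin{equation*}
E'(t) \leq (1+2C_f)E(t) + \|\res_{int,\theta^\ast}(\cdot,t)\|_{L^2(D)}^2 + 2(\|u\|_{C^1}+\|u^\ast\|_{C^1}) |\bD|^{1/2} \|\res_{sb,\theta^\ast}(\cdot,t)\|_{L^2(\bD)}.
\end{equation*}
Applying Gr\"onwall to the integrated form $E(t) \leq F(t) + (1+2C_f)\int_0^t E(s)\,ds$ with $F(t) := E(0) + \int_0^t g(s)\,ds$ yields $E(t) \leq F(T)\bigl(1 + (1+2C_f)T e^{(1+2C_f)T}\bigr)$. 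Integrating in $t$ over $(0,T)$ and using Cauchy--Schwarz in time on $\int_0^T \|\res_{sb,\theta^\ast}(\cdot,s)\|_{L^2(\bD)}\,ds \leq T^{1/2}\|\res_{sb,\theta^\ast}\|_{L^2((0,T)\times \bD)}$ gives
\begin{equation*}
\er_G^2 \leq \bigl[T + (1+2C_f)T^2 e^{(1+2C_f)T}\bigr]\Bigl( \|\res_{tb,\theta^\ast}\|_{L^2(D)}^2 + \|\res_{int,\theta^\ast}\|_{L^2(D_T)}^2 + 2 C_{\bD} T^{1/2}\|\res_{sb,\theta^\ast}\|_{L^2((0,T)\times\bD)} \Bigr),
\end{equation*}
which is precisely the structure reflected in $C_1$ and $C_2$.

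The final step converts $L^2$ norms of the residuals into the training errors by invoking the quadrature hypotheses \eqref{eq:hquad1}--\eqref{eq:hquad3} applied to the squared residuals $|\res_{tb,\theta^\ast}|^2$, $|\res_{sb,\theta^\ast}|^2$, $|\res_{int,\theta^\ast}|^2$ (which are in the required $C^k$ / $C^{k-2}$ classes because $u_{\theta^\ast}$ is a smooth neural network). Then $\|\res\|_{L^2}^2 \leq \er_T^2 + C_{quad} N^{-\alpha}$, so $\|\res\|_{L^2} \leq \er_T + C_{quad}^{1/2} N^{-\alpha/2}$ for the $tb$ and $int$ pieces, while the $sb$ piece enters under a square root, giving $\|\res_{sb}\|_{L^2}^{1/2} \leq (\er_T^{sb})^{1/2} + (C_{quad}^{sb})^{1/4} N_{sb}^{-\alpha_{sb}/4}$, which accounts for the unusual fractional powers in \eqref{eq:hegenb}. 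Using subadditivity of $\sqrt{\cdot}$ to distribute the square root through the sum produces exactly the stated bound.

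The main obstacle is the boundary term $\int_{\bD}\res_{sb,\theta^\ast}\,\partial_n\hat u\, ds$: since we do not have a trace estimate for $\partial_n\hat u$ in terms of $E$, we cannot close the energy estimate by absorption and must instead pull out $\partial_n\hat u$ in a pointwise $C^1$ norm. This is the reason $C_2$ depends on $\|u\|_{C^1} + \|u^\ast\|_{C^1}$, and it is also what forces the $L^1$-in-space-time bound on $\res_{sb,\theta^\ast}$, which (via two Cauchy--Schwarz inequalities, one on $\bD$ and one on $(0,T)$) leads to the square-root dependence $(\er_T^{sb})^{1/2}$ rather than the linear dependence enjoyed by the interior and temporal-boundary training errors.
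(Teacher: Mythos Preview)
Your proposal is correct and follows essentially the same approach as the paper: derive the forced parabolic equation for $\hat u = u^\ast - u$, run the $L^2$ energy estimate with the boundary term handled by pulling out $\partial_n\hat u$ in the $C^1$ norm (this is exactly the paper's $C_{\partial D}$), integrate, apply Gr\"onwall, integrate again over $[0,T]$, and finally replace the $L^2$ residual norms by training errors plus quadrature corrections. Your identification of why the $sb$ term carries the anomalous $(\er_T^{sb})^{1/2}$ and $N_{sb}^{-\alpha_{sb}/4}$ exponents---namely, that it enters the $\er_G^2$ bound linearly rather than quadratically because of the $L^\infty$ pull-out---matches the paper's reasoning precisely.
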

\begin{proof}
By the definitions of the residuals \eqref{eq:hres1}, \eqref{eq:hres2}, \eqref{eq:hres3} and the underlying PDE \eqref{eq:heat}, we can readily verify that the error $\hat{u}: u^{\ast} - u$ satisfies the following (forced) parabolic equation,
\begin{equation}
    \label{eq:herr}
\begin{aligned}
    \hat{u}_t &= \Delta \hat{u} + f(u^{\ast}) - f(u) + \res_{int}, \quad \forall x\in D~ t \in (0,T), \\
    \hat{u}(x,0) &= \res_{tb}(x), \quad \forall x \in D, \\
    u(x,t) &= \res_{sb}(x,t), \quad \forall x\in \bD, ~ t \in (0,T). 
    \end{aligned}    
\end{equation}
Here, we have denoted $\res_{int} = \res_{int,\theta^{\ast}}$ for notational convenience and analogously for the residuals $\res_{tb},\res_{sb}.$

Multiplying both sides of the PDE \eqref{eq:herr} with $\hat{u}$, integrating over the domain and integrating by parts, denoting $\n$ as the unit outward normal, yields,
\begin{align*}
    \frac{1}{2} \frac{d}{dt}\int_{D}|\hat{u}(x,t)|^2 dx &=  - \int_{D} |\nabla \hat{u}|^2 dx + 
    \int_{\bD} \res_{sb}(x,t) (\nabla \hat{u}\cdot \n) ds(x) + \int_{D} \hat{u} (f(u^{\ast}) - f(u)) dx + \int_{D} \res_{int} \hat{u} dx. \\
    &\leq \int_{D} |\hat{u}||f(u^{\ast}) - f(u)| dx + \frac{1}{2} \int_{D} \hat{u}(x,t)^2 dx + \frac{1}{2}\int_{D} |\res_{int}|^2 dx \\
   &+  \underbrace{|\bD|^{\frac{1}{2}}\left(\|u\|_{C^1([0,T] \times \bD)} + \|u^{\ast}\|_{C^1([0,T] \times \bD)}\right)}_{C_{\bD}(u,u^{\ast})}\left(\int_{\bD} |\res_{sb}(x,t)|^2 ds(x) \right)^{\frac{1}{2}} \\
   &\leq (C_f+\frac{1}{2}) \int_{D} |\hat{u}(x,t)|^2 dx +   \frac{1}{2}\int_{D} |\res_{int}|^2 dx  + C_{\bD}(u,u^{\ast})\left(\int_{\bD} |\res_{sb}(x,t)|^2 ds(x) \right)^{\frac{1}{2}}~({\rm by}~\eqref{eq:assf}). 
\end{align*}
Integrating the above inequality over $[0,\bar{T}]$ for any $\bar{T} \leq T$ and the definition \eqref{eq:hres3} together with Cauchy-Schwarz inequality, we obtain,
\begin{align*}
    \int_{D}|\hat{u}(x,\bar{T})|^2 dx &\leq \int_{D} |\res_{tb}(x)|^2 dx + (1+C_f)\int_0^{\bar{T}} \int_{D} |\hat{u}(x,t)|^2 dx dt +  \int_0^T\int_{D} |\res_{int}|^2 dx dt \\
    &+ C_{\bD}(u,u^{\ast})T^{\frac{1}{2}}\left(\int_0^T\int_{\bD} |\res_{sb}(x,t)|^2 ds(x) dt \right)^{\frac{1}{2}}. 
\end{align*}
Applying the integral form of the Gr\"onwall's inequality to the above, we obtain,
\begin{align*}
    &\int_{D}|\hat{u}(x,\bar{T})|^2 dx \\
    &\leq \left(1 + (1+2C_f)Te^{(1+2C_f)T}\right)\left(\int_{D} |\res_{tb}(x)|^2 dx +\int_0^T\int_{D} |\res_{int}|^2 dx dt +  C_{\bD}(u,u^{\ast})T^{\frac{1}{2}}\left(\int_0^T\int_{\bD} |\res_{sb}(x,t)|^2 ds(x) dt \right)^{\frac{1}{2}} \right).
\end{align*}
Integrating over $\bar{T} \in [0,T]$ yields,
\begin{equation}
    \label{eq:hpf1}
    \begin{aligned}
        &\er^2_{G} = \int_0^T \int_{D}|\hat{u}(x,\bar{T})|^2 dx dt \\
        &\leq \left(T + (1+2C_f)T^2e^{(1+2C_f)T}\right)\left(\int_{D} |\res_{tb}(x)|^2 dx +\int_0^T\int_{D} |\res_{int}|^2 dx dt +  C_{\bD}(u,u^{\ast})T^{\frac{1}{2}}\left(\int_0^T\int_{\bD} |\res_{sb}(x,t)|^2 ds(x) dt \right)^{\frac{1}{2}} \right).
    \end{aligned}
\end{equation}
By the definitions of different components of the training error \eqref{eq:hetrain} and applying the estimates \eqref{eq:hquad1}, \eqref{eq:hquad2}, \eqref{eq:hquad3} on the quadrature error yields the desired inequality \eqref{eq:hegenb}.
\end{proof}
\begin{remark}
The estimate \eqref{eq:hegenb} bounds the generalization error in terms of each component of the training error and the quadrature errors. Clearly, each component of the training error can be computed from the loss function \eqref{eq:lf2}, \eqref{eq:hlf}, once the training has been completed. As long as each component of the PINN training error is small, the bound \eqref{eq:hegenb} implies that the generalization error will be small for large enough number of training points. Although, the estimate is not by any means sharp as triangle inequalities and the Gr\"onwall's inequality are used, information can be gleaned from it.  For instance, the error due to the boundary residual has a bigger weight in \eqref{eq:hegenb}, relative to the interior and initial residuals. This is consistent with the observations of \cite{Lag1} and can also be seen in the recent papers such as \cite{DAR1} and suggests that the loss function \eqref{eq:hlf} could be modified such that the boundary residual is penalized more. 
\end{remark}
\begin{remark}
In addition to the training errors, which could depend on the underlying PDE solution, the estimate \eqref{eq:hegenb} shows explicit dependence on the underlying solution through the constant $C_{\bD}$, which is based only on the value of the underlying solution on the boundary. Similarly, the dependence on dimension is only seen through the quadrature error. 
\end{remark}
\begin{remark}
As in subsection \ref{sec:mc}, one has to modify the estimate for the generalization error \eqref{eq:hegenb}, when random training points are used. Defining cumulative generalization error $\bar{\er}_G$, analogously to \eqref{eq:ecgen}, by integrating \eqref{eq:hegen} over all training sets $\train$, identified with the vector $\vtrain$ and the analogous concepts of cumulative training error $\bar{\er}_T$ \eqref{eq:ectrain} and  validation error $\bar{\er}_V$ \eqref{eq:ecval}, we can readily combine the arguments of Lemma \ref{lem:11} and Theorem \ref{thm:heat} to obtain the following estimate on the cumulative generalization error,
\begin{equation}
    \label{eq:hecgenb}
    \begin{aligned}
    \bar{\er}_G^2 &\leq C_1^2\left(\left(\bar{\er}^{tb}_T \right)^2 +\left(\er^{tb}_{TV}\right)^2 +\left(\bar{\er}^{int}_T \right)^2 +\left(\er^{tb}_{TV}\right)^2 +  C_2^2\left(\bar{\er}_{T}^{sb} + \bar{\er}_{TV}^{sb} \right) \right) \\
    &+  C_1^2\left(\frac{std\left(\res_{tb}^2\right)}{N_{tb}^{\frac{1}{2}}}+ \frac{std\left(\res_{int}^2\right)}{N_{int}^{\frac{1}{2}}} +C_2^2\frac{\sqrt{std\left(\res_{sb}^2\right)}}{N_{sb}^{\frac{1}{4}}}      \right),
 \end{aligned}
\end{equation}
with constants $C_{1,2}$ defined in \eqref{eq:hct}, standard deviation of the residuals, defined analogously to \eqref{eq:std} and validation gaps defined by 
$$
\left(\er^{\ell}_{TV}\right)^2 = \left |\left(\bar{\er}^{\ell}_T\right)^2 -  \left(\bar{\er}^{\ell}_V\right)^2\right|,\quad \ell=sb,tb,int.
$$

\end{remark}
\subsection{Numerical experiments.}
In this section, we present numerical experiments for the approximation of solutions of the parabolic equation \eqref{eq:heat} by PINNs, generated with algorithm \ref{alg:PINN}. We focus on the case of the linear heat equation here, i.e, by setting $f \equiv 0$ in \eqref{eq:heat} as one has explicit formulas for the underlying solution and we use these formulas to explicitly compute the generalization error. 
\subsubsection{One-dimensional Heat Equation.}
For the first numerical experiment, we consider the heat equation in one space dimension i.e, $d=1$ in \eqref{eq:heat} with $f\equiv 0$, domain $D=[-1,1]$, $T=1$, and initial data $\bar{u}(x) = -\sin(\pi x)$. Then, the exact solution of the heat equation is given by,
\begin{equation}
    \label{eq:hex1}
    u(x,t) = -\sin(\pi x) e^{-\pi^2 t}.
\end{equation}
Clearly both the initial data and the exact solution are smooth and Theorem \ref{thm:heat} holds. In particular, we can use any grid based quadrature rule to generate training points in algorithm \ref{alg:PINN}. However, to keep the presentation general and allow for very high-dimensional problems later, we simply choose random points for all three components of the training set $\train_{int},\train_{sb},\train_{tb}$. Each set of points is chosen randomly, independently and identically distributed with the underlying uniform distribution. An illustration of these points is provided in figure \ref{fig:2}, where the random points in $\train_{int}$ are shown as blue dots, whereas the points in $\train_{sb}$ and $\train_{tb}$ are shown as black crosses. 
\begin{figure}[htbp]
\centering
\includegraphics[width=0.7\textwidth]{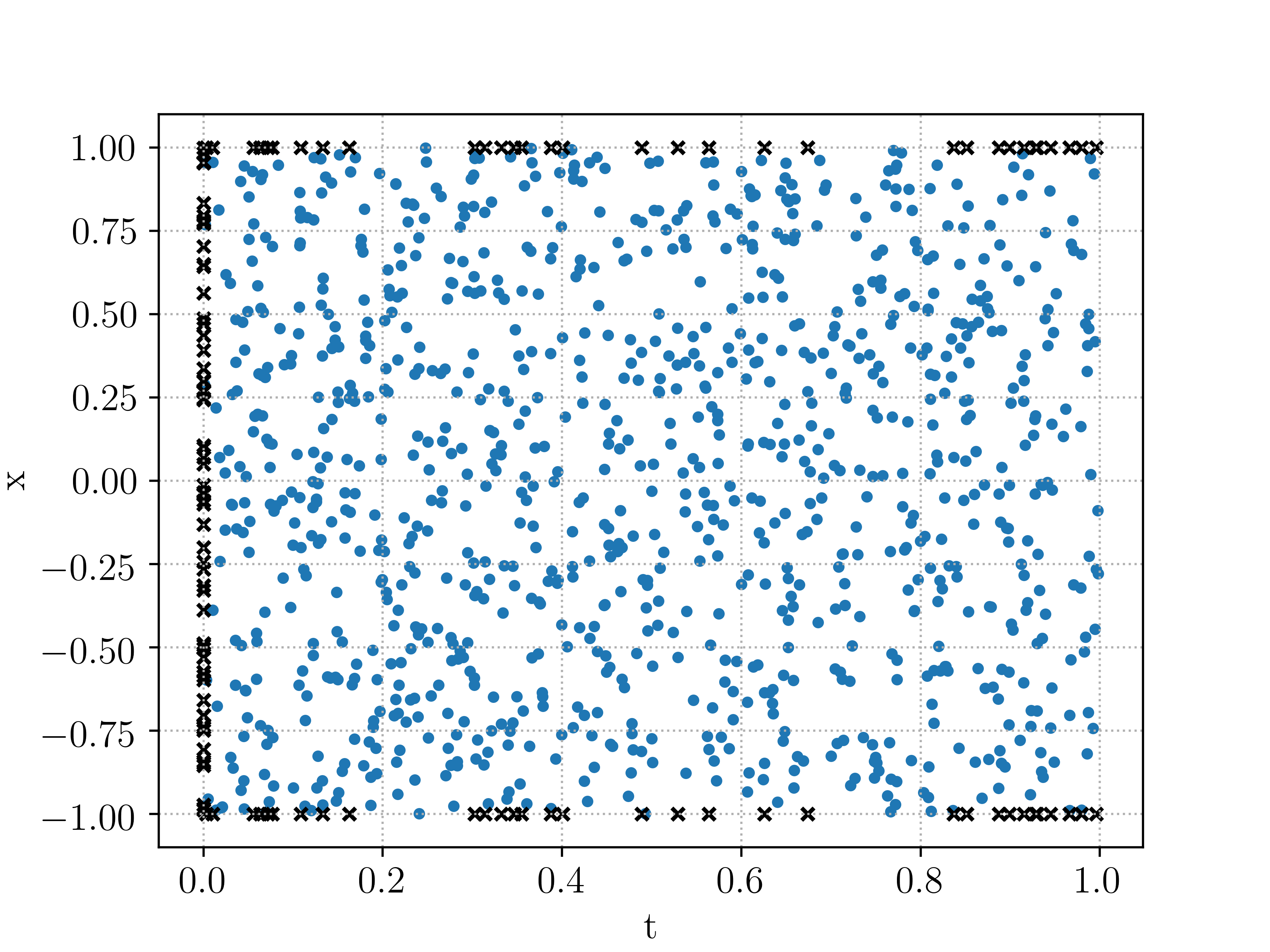}
\caption{An illustration of the training set $\train$ for the one-dimensional heat equation \eqref{eq:heat} with randomly chosen training points. Points in $\train_{int}$ are depicted with blue dots and those in $\train_{tb} \cup \train_{sb}$ are depicted with black crosses.}
\label{fig:2}
\end{figure}
Our first aim in this experiment is to illustrate the estimate \eqref{eq:hecgenb} on the generalization error. To this end, we run algorithm \ref{alg:PINN} with this random training set and with following hyperparameters: we consider a fully connected neural network architecture \eqref{eq:ann1}, with the $\tanh$ activation function, with $4$ hidden layers and $20$ neurons in each layer, resulting in neural networks with $1761$ tuning parameters. Moreover, we use the loss function \eqref{eq:lf2}, \eqref{eq:hlf}, with $\lambda=1$ and with $q=2$ i.e $L^2$-regularization, with regularization parameter $\lambda_{reg}=10^{-6}$. Finally, the optimizer is the second-order LBFGS method. This choice of hyperparameters is consistent with the ensemble training, presented in the next section. 

On this hyperparameter configuration, we vary the number of training points as $N_{int}=$ [1000, 2000, 4000, 8000, 16000], $N_{tb}=N_{sb}=$[8, 16, 32, 64, 256] concurrently, run the algorithm \ref{alg:PINN} to obtain the corresponding trained neural network and evaluate the resulting errors i.e, the cumulative training errors \eqref{eq:ectrain}, and the upper bound in \eqref{eq:hecgenb}. The cumulative generalization error \eqref{eq:ecgen} is computed by evaluating the error of the neural network with respect to the exact solution \eqref{eq:hex1} on a \emph{randomly chosen test set} of $10^5$ points. We compute the averages and standard deviations by taking $K=30$ different random training sets. The results for this procedure are shown in figure \ref{fig:hconv1}. We see from this figure that the cumulative generalization error \eqref{eq:ecgen} is very low to begin with and decays with the number of boundary training points $N_{tb}=N_{sb}$. The effect of the number of interior training points seems to be minimal in this case. Similarly, the computable upper bound \eqref{eq:hecgenb} also decays with respect to increasing the number of boundary training points. In particular, this shows that the constants and the standard deviations of the residuals in the bound \eqref{eq:ecgen} are not blowing up as the number of training points is increased. However, this upper bound does appear to be a significant overestimate as it is almost three orders to magnitude greater than the actual generalization error. This is not surprising as we had used non-sharp estimates such as triangle inequality and Gr\"onwall's inequality rather indiscriminately while deriving \eqref{eq:hecgenb}. Moreover, obtaining sharp bounds on generalization errors is a notoriously hard problem in machine learning \cite{NEYS1,AR1} with overestimates of tens of orders of magnitude. More importantly, both the computed generalization error and the upper bound follow the same decay in the number of training samples. Surprisingly, the training errors \eqref{eq:ectrain} are slightly larger than the computed generalization errors for this example. Note that this observation is still consistent with the bound \eqref{eq:hecgenb}. Given the fact that the training error is defined in terms of residuals and the generalization error is the error in approximating the solution of the underlying PDE by the PINN, there is no reason, a priori, to expect that the generalization error should be greater than the training error.  
\begin{figure}[htbp]
\centering
  \includegraphics[width=0.7\textwidth]{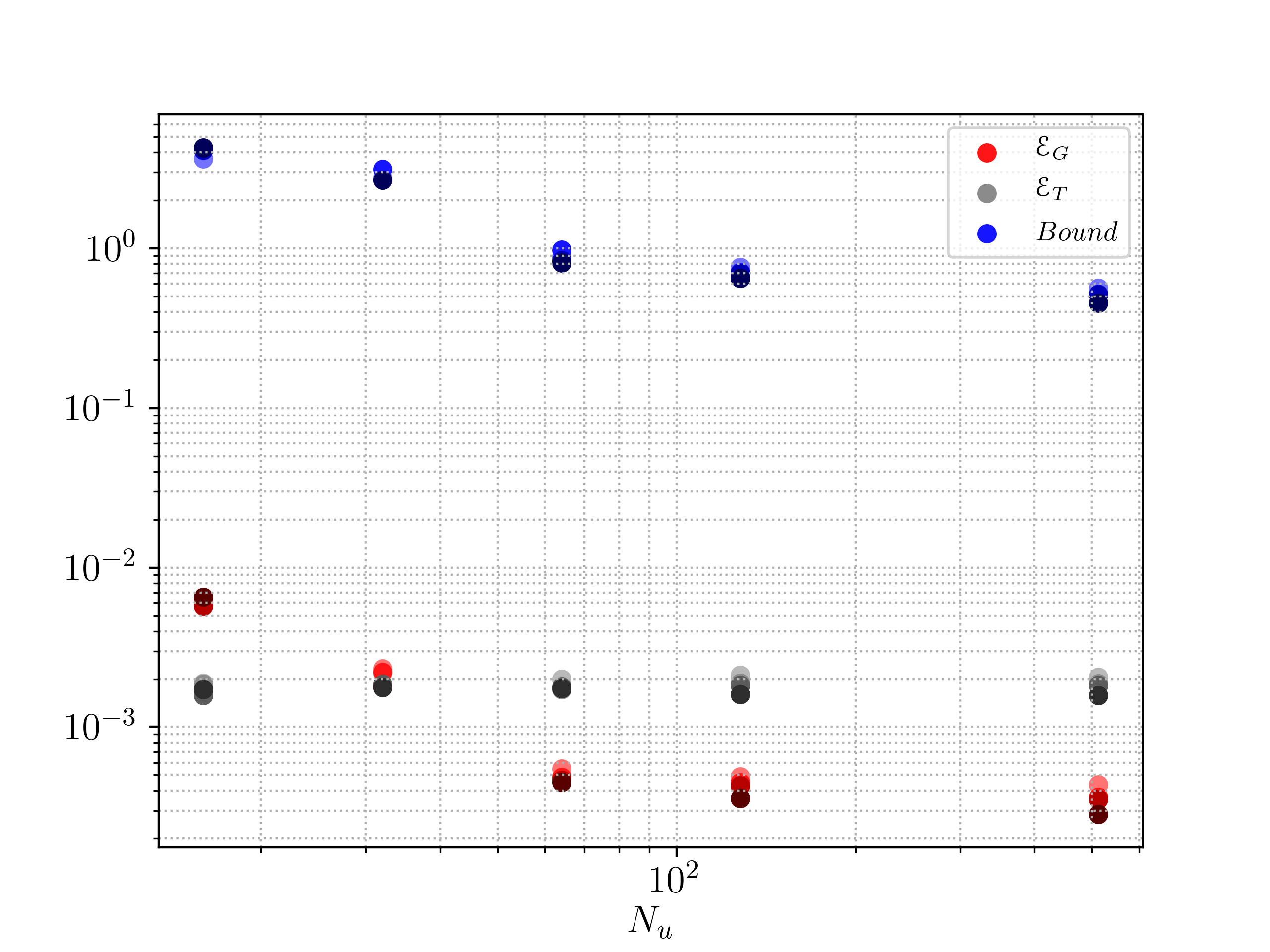}
  \caption{Generalization error, training error and theoretical bound \eqref{eq:hecgenb} VS number of training samples $N_u = N_{sb} + N_{tb}$. Each color gradation corresponds to different values of the number of interior training points $N_{int}$ (from the smallest to the largest).}
\label{fig:hconv1}
\end{figure}

\begin{table}[htbp] 
    \centering
    \renewcommand{\arraystretch}{1.1} 
    
    \footnotesize{
        \begin{tabular}{c  c c c c c } 
            \toprule
            \bfseries   &\bfseries $K-1$  & \bfseries $d$  &\bfseries $q$ & \bfseries $\lambda_{reg}$  &\bfseries $\lambda$\\ 
            \midrule
            \midrule
             1D Heat Equation & 2, 4, 8  & 12, 16, 20 &1, 2& 0, $10^{-6}$, $10^{-5}$, $10^{-4}$, $10^{-3}$& 0.01, 0.1, 1, 10\\
            \midrule
             ND Heat Equation & 4  & 20 & 2& 0, $10^{-6}$, $10^{-5}$& 0.1, 1, 10\\
            \midrule
            Burgers Equation     & 4, 8, 10   & 16, 20, 24 &2& 0, $10^{-6}$, $10^{-5}$& 0.1, 1, 10\\
            \midrule 
             Euler Equations, Taylor Vortex & 4, 8, 12   & 16, 20, 24 &1, 2& 0, $10^{-6}$, $10^{-5}$& 0.1, 1, 10\\
                \midrule 
             Euler Equations, Double Shear Layer & 16, 20, 24  & 32, 40, 48 &1, 2& 0, $10^{-6}$, $10^{-5}$& 0.1, 1, 10\\

            \bottomrule
        \end{tabular}
    \caption{Hyperparameter configurations employed in the ensemble training of PINNs:  $K-1$ is the number of hidden layers,  $d$ is the number of neurons per layer, $q$ and $\lambda_{reg}$ are the exponent in the regularization term \eqref{eq:reg} and the regularization parameter, respectively, $\lambda$ is the scalar balancing the role of  PDE and the data loss.}
        \label{tab:1}
    }
\end{table}
\subsubsection{Ensemble training}
A PINN involves several hyperparameters, some of which are shown in Table \ref{tab:1}. An user is always confronted with the question of which parameter to choose. The theory, presented in this paper and in the literature, offers very little guidance about the choice of hyperparameters. Instead, it is standard practice in machine learning to do a systematic hyperparameter search. To this end, we follow the \emph{ensemble training} procedure of \cite{LMR1}  with a randomly chosen training set ($N_{int} = 1024, N_{sb}=N_{tb}=64$) and compute the marginal distribution of the generalization error with respect to different choices of the hyperparameters (see Table \ref{tab:1}). The ensemble training results in a total number of 360 configurations. For each of them, the model is retrained five times with different starting values of the trainable weights in the optimization algorithm and the one resulting in the smallest value of the training loss is selected. We plot the corresponding histograms, visualizing the marginal generalization error distributions, in figure \ref{fig:hhist}.
\begin{figure}[htbp]
    \begin{subfigure}{.3\textwidth}
        \centering
        \includegraphics[width=1\linewidth]{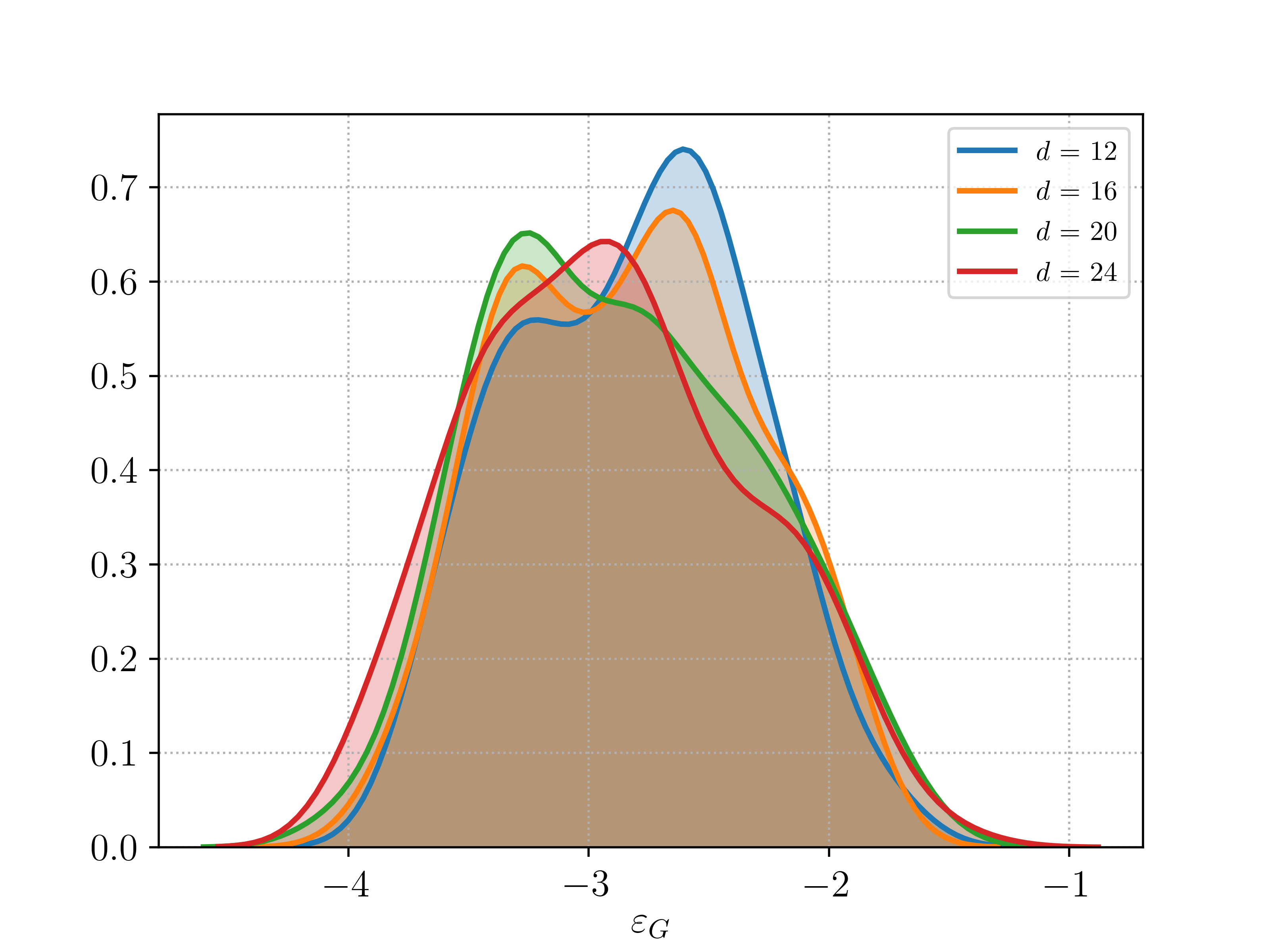}
        \caption{Number of neurons per layer $n$}
    \end{subfigure}
    \begin{subfigure}{.3\textwidth}
        \centering
        \includegraphics[width=1\linewidth]{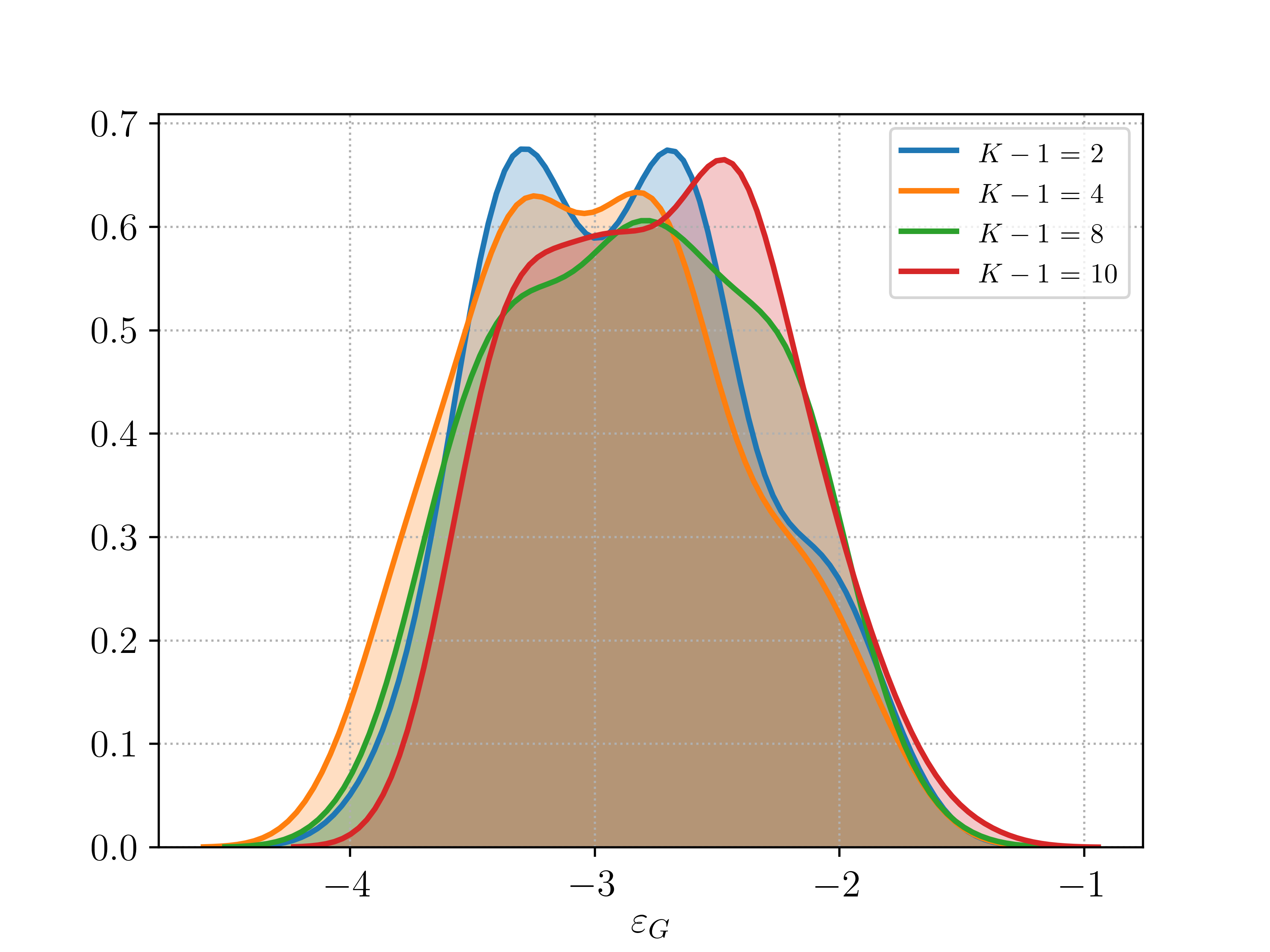}
        \caption{Number of hidden layers $K-1$}
    \end{subfigure}
 \begin{subfigure}{.3\textwidth}
        \centering
        \includegraphics[width=1\linewidth]{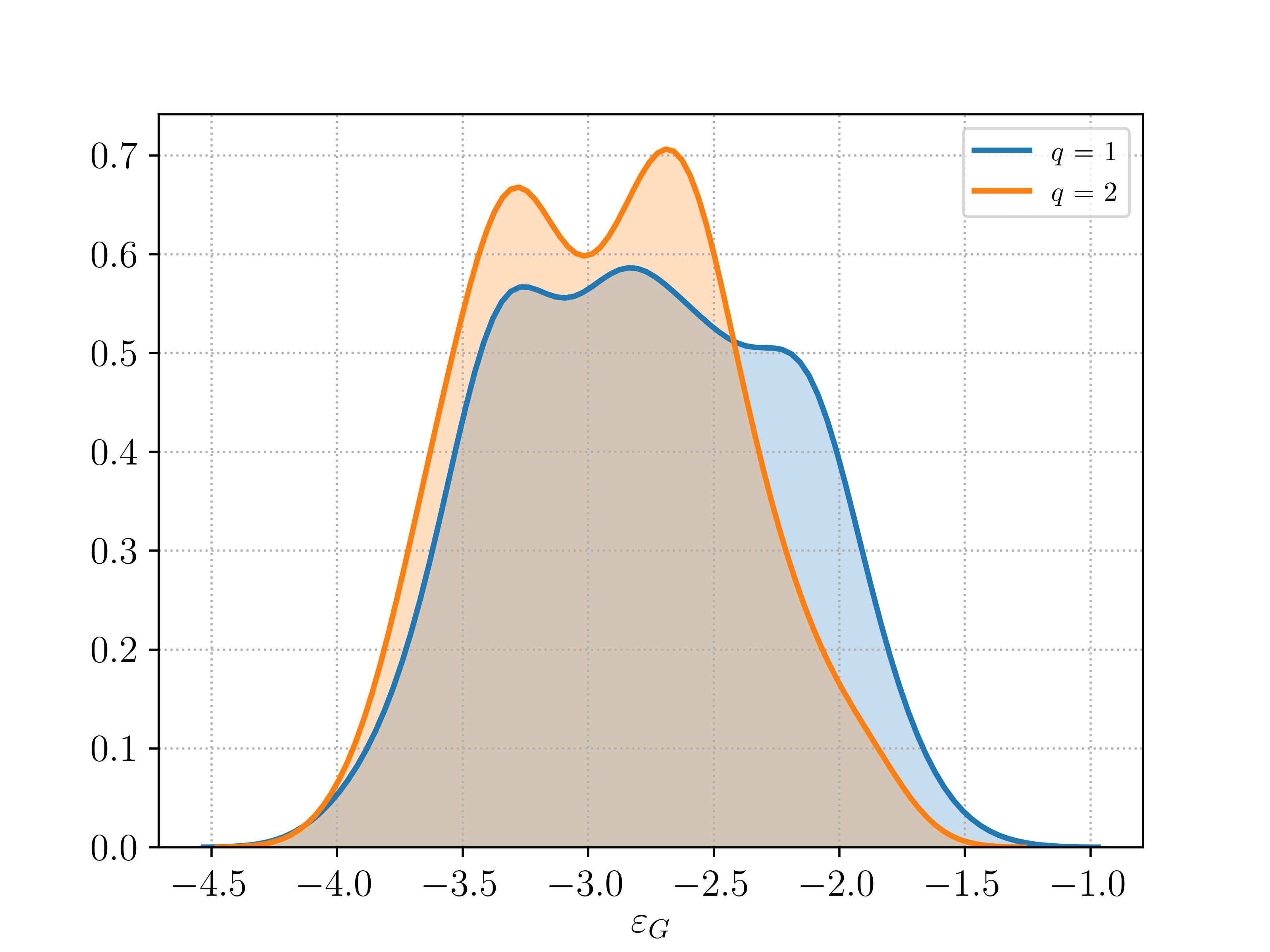}
        \caption{Regularization kernel $q$}
    \end{subfigure}
    
\centering
\begin{subfigure}{.3\textwidth}
        \centering
        \includegraphics[width=1\linewidth]{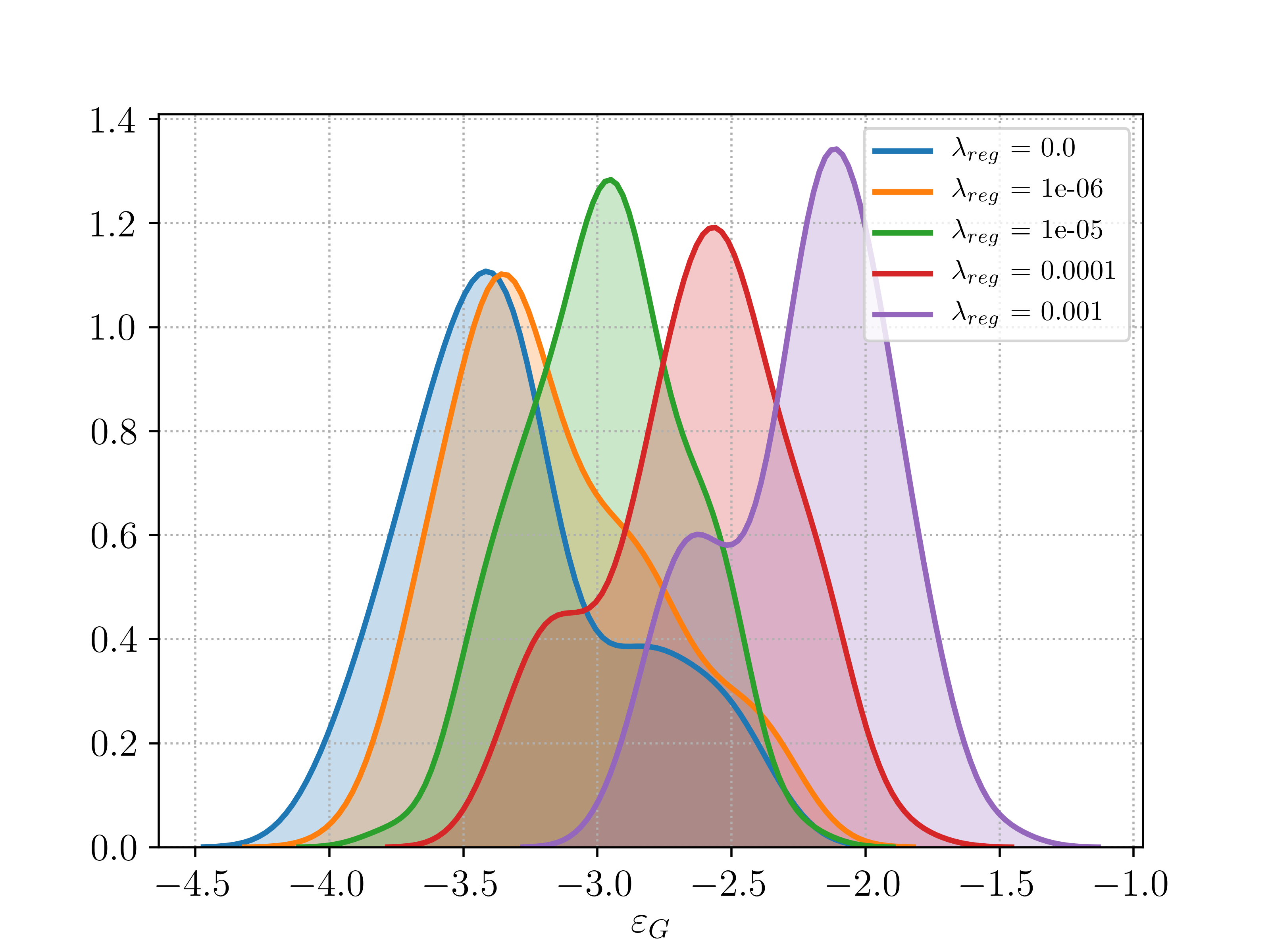}
        \caption{Regularization parameter $\lambda_{reg}$}
    \end{subfigure}
\begin{subfigure}{.3\textwidth}
        \includegraphics[width=1\linewidth]{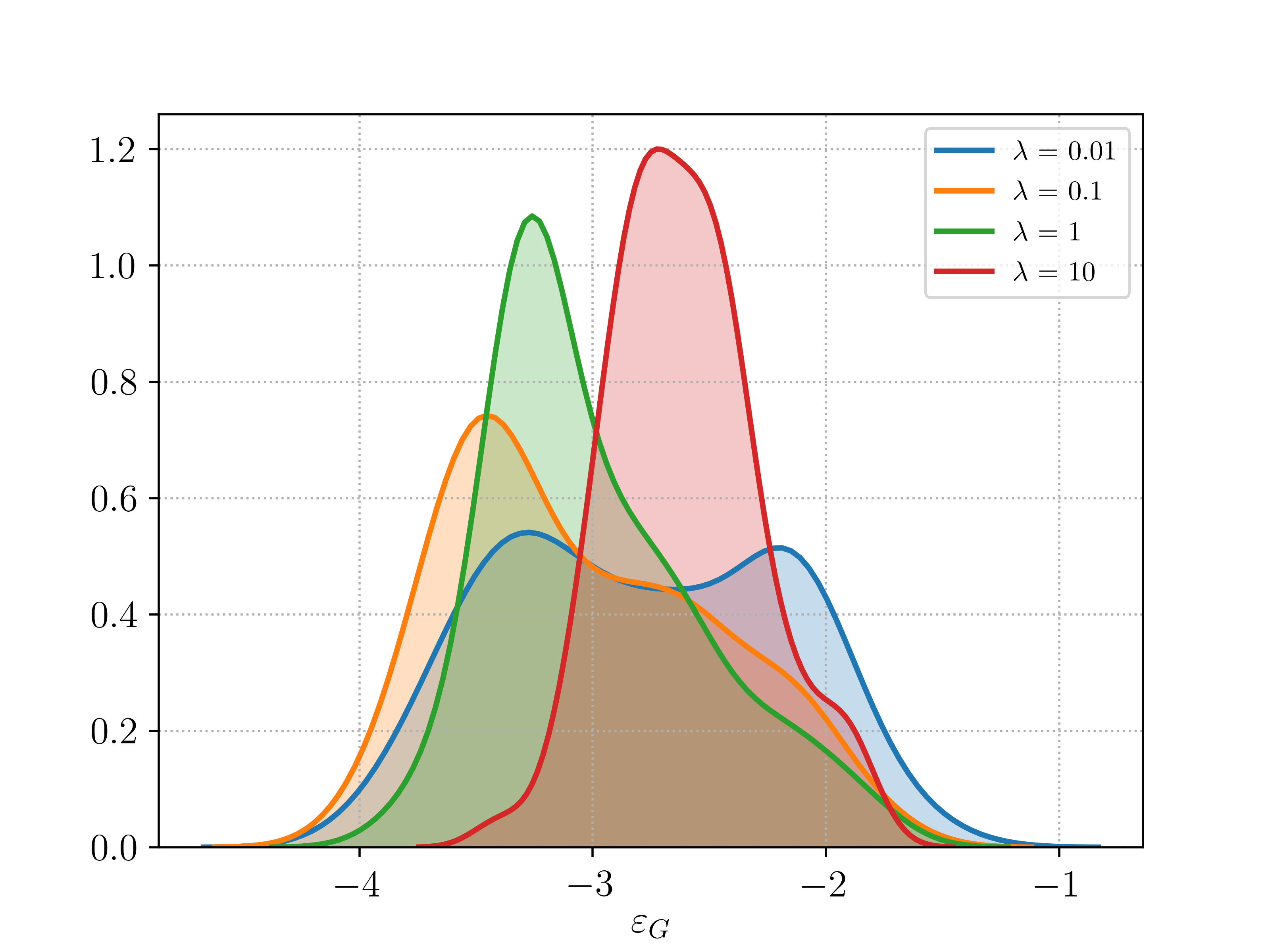}
        \caption{Residual parameter $\lambda$}
    \end{subfigure}
    \caption{Marginal distributions of the generalization error to different network hyperparameters}
    \label{fig:hhist}
\end{figure}
\par As seen from figure \ref{fig:hhist}, there is a large variation in the spread of the generalization error, often two to three orders of magnitude, indicating sensitivity to hyperparameters. However, even the worst case errors are fairly low for this example. Comparing different hyperparameters, we see that there is not much sensitivity to the network architecture (number of hidden layers and number of neurons per layer) and a slight regularization or no regularization in the loss function \eqref{eq:lf2} is preferable to large regularizations. The most sensitive parameter is $\lambda$ in \eqref{eq:hlf} where $\lambda = 1$ or $0.1$ are significantly better than larger values of $\lambda$. This can be explained in terms of the bounds \eqref{eq:hegenb}, \eqref{eq:hecgenb}, where the boundary residual $\res_{sb}$ has a larger weight in error. A smaller value of $\lambda$ enforces this component of error, and hence the overall error, to be small, and we see exactly this behavior in the results.

Finally, in figure \ref{fig:tg}, we plot the total training error \eqref{eq:hetrain} on a logarithmic scale (x-axis) against the generalization error \eqref{eq:hegen} (in log scale) (y-axis) for all the hyperparameter configurations in the ensemble training.  This plot clearly shows that the two errors are highly correlated and validates the fundamental point of the estimates \eqref{eq:hegenb} and \eqref{eq:hecgenb} that if the PINNs are trained well, they generalize very well. In other words, low training errors imply low generalization errors. Moreover, we see from figure \ref{fig:tg} that the generalization error scales as a square root of the total training error, as predicted by the estimate \eqref{eq:hecgenb}.
\subsubsection{Heat equation in several space dimensions}
For this experiment, we consider the linear heat equation in domain $[0,1]^n$ and for the time interval $[0,1]$, for different space dimensions $n$. We consider the initial data $\bar{u}(x) = \frac{\|x\|^2}{n}$. In this case, the explicit solution of the linear heat equation is given by \begin{equation}
    \label{eq:hex2}
    u(x,t)=\frac{\|x\|^2}{n} + 2t. 
\end{equation}
For different values of $n$ ranging up to $n=100$, we generate PINNs with algorithm \ref{alg:PINN}, by selecting randomly chosen training points, with respect to the underlying uniform distribution. Ensemble training, as outlined above, is performed in order to select the best performing hyperparameters among the ones listed in Table \ref{tab:1} and resulting errors are shown in \ref{tab:h2}. In particular, we present the relative percentage (cumulative) generalization error $\er^r_G$, readily computed from definition \eqref{eq:hegen} and normalized with the $L^2$-norm of the exact solution \eqref{eq:hex2}. We see from the table that the generalization errors are very low, less than $0.1\%$ upto 10 spatial dimensions and rise rather slowly (approximately linearly) with dimension, resulting in a low generalization error of $2.6\%$, even for $100$ space dimensions. This shows the ability of PINNs to possibly overcome the curse of dimensionality, at least for the heat equation. Note that we have not used \emph{any explicit solution formulas} such as the Feynman-Kac formulas in algorithm \ref{alg:PINN}. Still, PINNs were able to obtain low enough errors, comparable to supervised learning based neural networks that relied on the availability of an explicit solution formula \cite{HEJ1,Jent1}. This experiment illustrates the ability of PINNs to overcome the \emph{curse of dimensionality}, at least with random training points. 

\begin{table}[htbp] 
    \centering
    \renewcommand{\arraystretch}{1.1} 
    
    \footnotesize{
        \begin{tabular}{ c c c c c c c c c c c} 
            \toprule
            \bfseries $n$  &\bfseries $N_{int}$  & \bfseries $N_{sb}$& \bfseries $N_{tb}$  &\bfseries $K-1$ & \bfseries $d$  &\bfseries $L^1$-reg. &\bfseries $L^2$-reg. &\bfseries $\lambda$ &\bfseries $\er_T$ &\bfseries $\er_G^r$ \\ 
            \midrule
            \midrule
            1            & 65536 & 32768& 32768&4&20& 0.0& 0.0& 0.1 &$2.8\cdot 10^{-5}$& 0.0035\% \\
            \midrule 
              5  & 65536 & 32768& 32768&4 &20& 0.0& 0.0& 1.0 &0.0002& 0.016\%\\
                \midrule 
              10   & 65536 & 32768& 32768 &4 &20& 0.0& 0.0& 0.1 &0.0003& 0.03\%\\
                \midrule 
              20  & 65536 & 32768& 32768 &4 &20& 0.0& $10^{-6}$& 0.1 &0.006& 0.79\%\\
               \midrule 
              50  & 65536 & 32768& 32768 &4 &20& 0.0& $10^{-6}$& 0.1 &0.0056& 1.5\%\\
               \midrule 
              100  & 65536 & 32768& 32768 &4 &20& 0.0& 0.0& 0.1 &0.0035& 2.6\%\\

            \bottomrule
        \end{tabular}
    \caption{Best performing hyperparameters configurations for the multi-dimensional heat equation, for different values of the dimensions $n$.}
    \label{tab:h2}
    }
\end{table}

\begin{figure}[h!]
\centering
  \includegraphics[width=0.7\textwidth]{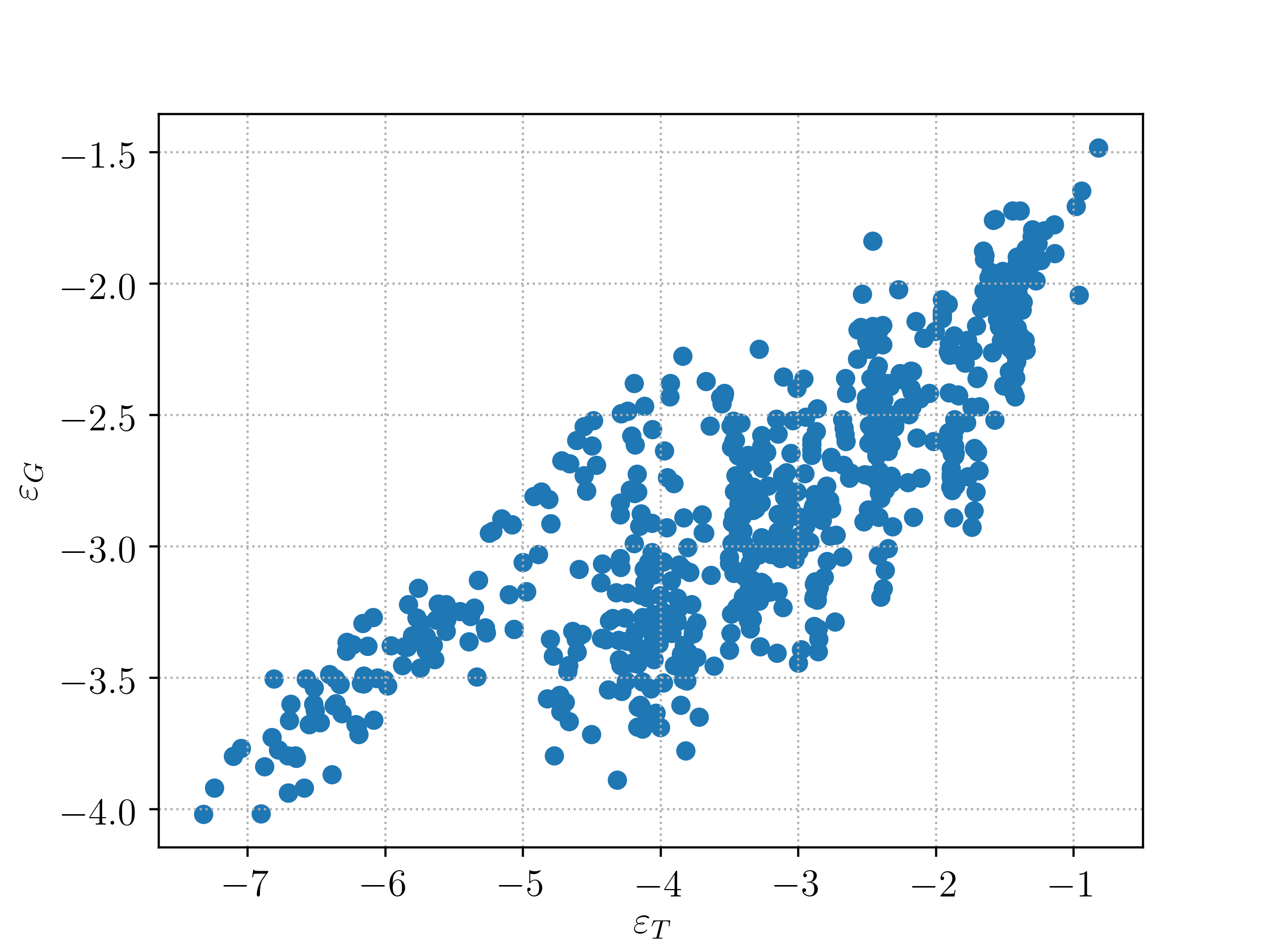}
  \caption{Log of Training error (X-axis) vs Log of  Generalization error (Y-axis) for each hyperparameter configuration during ensemble training. }
\label{fig:tg}
\end{figure}

\section{Viscous scalar conservation laws}
\label{sec:4}
\subsection{The underlying PDE}
In this section, we consider the following one-dimensional version of \emph{viscous scalar conservation laws} as a model problem for quasilinear, convection-dominated diffusion equations,
\begin{equation}
\label{eq:vscl}
\begin{aligned}
u_t + f(u)_x &= \nu u_{xx}, \quad \forall x\in (0,1),~t \in [0,T], \\
u(x,0) &= \bar{u}(x), \quad \forall x \in (0,1). \\
u(0,t) &= u(1,t) \equiv 0, \quad \forall t \in [0,T].
\end{aligned}
\end{equation}
Here, $\bar{u} \in C^k([0,1])$, for any $k \geq 1$, is the initial data and we consider zero Dirichlet boundary conditions. Note that $0 < \nu \ll 1$ is the viscosity coefficient. The flux function is denoted by $f\in C^k(\R;\R)$.

We emphasize that \eqref{eq:vscl} is a model problem that we present here for notational and expositional simplicity. The following results can be readily extended in the following directions:
\begin{itemize}
    \item Several space dimensions. 
    \item Other boundary conditions such as Periodic or Neumann boundary conditions. 
    \item More general forms of the viscous term, namely $\nu \left(B(u)u_x\right)_x$, for any $B \in C^k(\R;\R)$ with $B(v) \geq c > 0$, for all $v \in \R$ and for some $c$.
\end{itemize}

Moreover, we can follow standard textbooks such as \cite{GRbook} to conclude that as long as $\nu > 0$, there exists a classical solution $u \in C^k([0,T)\times [0,1])$ of the viscous scalar conservation law \eqref{eq:vscl}. 
\subsection{PINNs for \eqref{eq:vscl}}
We realize the abstract algorithm \ref{alg:PINN} in the following concrete steps,
\subsubsection{Training Set.}
Let $D =(0,1)$ and $D_T = (0,1) \times (0,T)$. As in section \ref{sec:3}, we divide the training set $\train = \train_{int} \cup \train_{sb} \cup \train_{tb}$ of the abstract PINNs algorithm \ref{alg:PINN} into the following three subsets, 
\begin{itemize}
    \item Interior training points $\train_{int}=\{y_n\}$ for $1 \leq n \leq N_{int}$, with each $y_n = (x_n,t_n) \in D_T$. These points can the quadrature points, corresponding to a suitable space -time grid-based composite Gauss quadrature rule or generated from a low-discrepancy sequence in $D_T$.
    \item Spatial boundary training points $\train_{sb} = (0,t_n) \cup (1,t_n)$ for $1 \leq n \leq N_{sb}$, and the points $t_n$ chosen either as Gauss quadrature points or low discrepancy sequences in $[0,T]$.
    \item  Temportal boundary training points $\train_{tb} = \{x_n\}$, with $1 \leq n \leq N_{tb}$ and each $x_n \in (0,1)$, chosen either as Gauss quadrature points of low-discrepancy sequences. 
\end{itemize}
\subsubsection{Residuals}
In the algorithm \ref{alg:PINN} for generating PINNs, we need to define appropriate residuals. For the neural network $u_{\theta} \in C^k([0,T]\times [0,1])$, defined by \eqref{eq:ann1}, with a smooth activation function such as $\sigma = \tanh$ and $\theta \in \Theta$ as the set of tuning parameters, we define the following residuals,   
\begin{itemize}
    \item Interior Residual given by,
    \begin{equation}
        \label{eq:bres1}
        \res_{int,\theta}(x,t):= \partial_t (u_{\theta}(x,t)) + \partial_x (f(u_{\theta}(x,t))) - \nu \partial_{xx} (u_{\theta}(x,t)).
    \end{equation}
\item Spatial boundary Residual given by,
\begin{equation}
    \label{eq:bres2}
    \left(\res_{sb,0,\theta}(t),~\res_{sb,1,\theta}(t)\right) := \left(u_{\theta}(0,t),~u_{\theta}(1,t)\right), \quad  \forall t \in (0,T]. 
\end{equation}
\item Temporal boundary Residual given by,
\begin{equation}
    \label{eq:bres3}
    \res_{tb,\theta}(x):= u_{\theta}(x,0) - \bar{u}(x), \quad \forall x \in [0,1]. 
\end{equation}
All the above quantities are well defined for $k \geq 2$ and $\res_{int,\theta} \in C^{k-2}([0,1] \times [0,T]),~ \res_{sb,\theta} \in C^k([0,T]),~ \res_{tb,\theta} \in C^k([0,1])$. 
\end{itemize}
\subsubsection{Loss function}
We use the following loss function to train the PINN for approximating the viscous scalar conservation law \eqref{eq:vscl},
\begin{equation}
    \label{eq:blf}
    J(\theta):= \sum\limits_{n=1}^{N_{tb}} w^{tb}_n|\res_{tb,\theta}(x_n)|^2 + \sum\limits_{n=1}^{N_{sb}} w^{sb}_n|\res_{sb,0,\theta}(t_n)|^2   + \sum\limits_{n=1}^{N_{sb}} w^{sb}_n|\res_{sb,1,\theta}(t_n)|^2  + \lambda \sum\limits_{n=1}^{N_{int}} w^{int}_n|\res_{int,\theta}(x_n,t_n)|^2 .
\end{equation}
Here the residuals are defined by \eqref{eq:bres3}, \eqref{eq:bres2}, \eqref{eq:bres1}. $w^{tb}_n$ are the $N_{tb}$ quadrature weights corresponding to the temporal boundary training points $\train_{tb}$, $w^{sb}_n$ are the $N_{sb}$ quadrature weights corresponding to the spatial boundary training points $\train_{sb}$ and $w^{int}_n$ are the $N_{int}$ quadrature weights corresponding to the interior training points $\train_{int}$. Furthermore, $\lambda$ is a hyperparameter for balancing the residuals, on account of the PDE and the initial and boundary data, respectively. 
\subsection{Estimate on the generalization error.}
As for the semilinear parabolic equation, we will try to estimate the following generalization error for the PINN $u^{\ast} = u_{\theta^\ast}$, generated through algorithm \ref{alg:PINN}, with loss functions \eqref{eq:lf2}, \eqref{eq:blf}, for approximating the solution of the viscous scalar conservation law \eqref{eq:vscl}:
\begin{equation}
    \label{eq:begen}
    \er_{G}:= \left(\int\limits_0^T \int\limits_0^1 |u(x,t) - u^{\ast}(x,t)|^2 dx dt \right)^{\frac{1}{2}}.
\end{equation}
This generalization error will be estimated in terms of the \emph{training error},
\begin{equation}
    \label{eq:betrain}
    \begin{aligned}
    \er^2_{T}&:=
    \lambda\underbrace{\sum\limits_{n=1}^{N_{int}} w^{int}_n|\res_{int,\theta^{\ast}}(x_n,t_n)|^2}_{(\er_T^{int})^2} +\underbrace{\sum\limits_{n=1}^{N_{tb}} + w^{tb}_n|\res_{tb,\theta^{\ast}}(x_n)|^2}_{(\er_T^{tb})^2} \\
    &+ \underbrace{\sum\limits_{n=1}^{N_{sb}} w^{sb}_n|\res_{sb,0,\theta^{\ast}}(t_n)|^2}_{(\er_T^{sb,0})^2} + \underbrace{\sum\limits_{n=1}^{N_{sb}} w^{sb}_n|\res_{sb,1,\theta^{\ast}}(t_n)|^2}_{(\er_T^{sb,1})^2},
\end{aligned}
\end{equation}
readily computed from the training loss \eqref{eq:blf} \emph{a posteriori}. We have the following estimate,
\begin{theorem}
\label{thm:burg}
Let $\nu > 0$ and let $u \in C^k((0,T) \times (0,1))$ be the unique classical solution of the viscous scalar conservation law \eqref{eq:vscl}. Let $u^{\ast} = u_{\theta^{\ast}}$ be the PINN, generated by algorithm \ref{alg:PINN}, with loss function \eqref{eq:blf}. Then, the generalization error \eqref{eq:begen} is bounded by,
\begin{equation}
    \label{eq:begenb}
    \begin{aligned}
    \er_G^2 &\leq \left(T + {\bf C}T^2e^{{\bf C}T}\right)\left[
    \left(\er^{tb}_{T}\right)^2 + \left(\er^{int}_{T}\right)^2
+     2\bar{{\bf C}}_b\left( \left(\er^{sb,0}_{T}\right)^2 +  \left(\er^{sb,1}_{T}\right)^2 \right) +  2\nu{\bf C}_b T^{\frac{1}{2}}\left(\er^{sb,0}_{T}+\er^{sb,1}_{T}\right)
    \right]\\
    &+\left(T + {\bf C}T^2e^{{\bf C}T}\right)\left[C^{tb}_{quad}N^{-\alpha_{tb}}_{tb} + C^{int}_{quad}N^{-\alpha_{int}}_{int} + 2\bar{{\bf C}}_b\left(\left(C^{sb,0}_{quad}+ C^{sb,1}_{quad}\right)N^{-\alpha_{sb}}_{sb}\right) +  2\nu{\bf C}_b T^{\frac{1}{2}}\left(\left(C^{sb,0}_{quad}+ C^{sb,1}_{quad}\right)^{\frac{1}{2}}N_{sb}^{-\frac{\alpha_{sb}}{2}} \right)\right].
    \end{aligned}
\end{equation}
Here, the training errors are defined by \eqref{eq:betrain} and the constants are given by ${\bf C} = 1 + 2 C_{f,u,u^{\ast}}$, with 
\begin{equation}
    \label{eq:bc1}
    \begin{aligned}
C_{f,u,u^{\ast}} &= C\left(\|f\|_{C^2},\|u\|_{W^{1,\infty}},\|u^{\ast}\|_{L^\infty} \right) = \left|f^{\prime\prime}\left(\max\{\|u\|_{L^\infty},\|u^{\ast}\|_{L^\infty}\}\right)\right|\|u_x\|_{L^{\infty}}, \\
{\bf C}_b &= \left(\|u_x\|_{C([0,1]\times[0,T])} + \|u^{\ast}_x\|_{C([0,1]\times[0,T])}\right), \\
\end{aligned}
\end{equation}
$\bar{{\bf C}}_b = \bar{{\bf C}}_b\left(\|f^{\prime}\|_{\infty},\|u^{\ast}\|_{C^0([0,1] \times [0,T])}\right)$ and $C^{tb}_{quad} = C^{tb}_{qaud}\left(\|\res^2_{tb,\theta^{\ast}}\|_{C^k}\right)$, $C^{int}_{quad} = C^{int}_{qaud}\left(\|\res^2_{int,\theta^{\ast}}\|_{C^{k-2}}\right)$, $C^{sb,0}_{quad} = C^{sb,0}_{qaud}\left(\|\res^2_{sb,0,\theta^{\ast}}\|_{C^k}\right)$, $C^{sb,1}_{quad} = C^{sb,1}_{qaud}\left(\|\res^2_{sb,1,\theta^{\ast}}\|_{C^k}\right)$ are constants are appear in the bounds on quadrature error \eqref{eq:hquad1}-\eqref{eq:hquad3}. 
\end{theorem}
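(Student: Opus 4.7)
The plan is to follow the template of Theorem~\ref{thm:heat}, namely derive an error PDE, perform a standard $L^2$ energy estimate in space, apply Gr\"onwall in time, integrate in $\bar T$, and then convert the $L^2$ norms of the residuals into training errors via the quadrature hypotheses \eqref{eq:hquad1}--\eqref{eq:hquad3}. Concretely, set $\hat u = u^\ast - u$ and observe that, by the definitions \eqref{eq:bres1}--\eqref{eq:bres3} of the residuals and the PDE \eqref{eq:vscl}, $\hat u$ solves
\begin{equation*}
\hat u_t + \bigl(f(u^\ast) - f(u)\bigr)_x = \nu \hat u_{xx} + \res_{int}, \qquad \hat u(x,0) = \res_{tb}(x),\qquad \hat u(0,t) = \res_{sb,0}(t),\quad \hat u(1,t) = \res_{sb,1}(t).
\end{equation*}

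Multiplying by $\hat u$ and integrating over $(0,1)$, the viscous term produces $-\nu\!\int_0^1 \hat u_x^2\,dx + \nu[\hat u\hat u_x]_0^1$; the boundary contribution is controlled using $|\hat u_x|\le {\bf C}_b$ and $|\hat u(i,t)| = |\res_{sb,i}(t)|$, yielding a bound of the form $\nu{\bf C}_b(|\res_{sb,0}|+|\res_{sb,1}|)$. For the convective term I would write $f(u^\ast)-f(u) = F(u,u^\ast)\,\hat u$ with $F(u,u^\ast)=\int_0^1 f'(u+\tau\hat u)\,d\tau$, integrate by parts once, and then use $\hat u\,\hat u_x = \tfrac12(\hat u^2)_x$ and integrate by parts again; this yields an interior bound $C_{f,u,u^\ast}\int_0^1 \hat u^2\,dx$ (with $C_{f,u,u^\ast}$ as in \eqref{eq:bc1}, coming from the $f''\,u_x$ that survives after the cancellation) plus a boundary contribution bounded by $\bar{\mathbf C}_b(\res_{sb,0}^2+\res_{sb,1}^2)$ using $\|f'\|_\infty$ and the $L^\infty$ bound on $u^\ast$. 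Dropping the nonnegative $-\nu\!\int \hat u_x^2\,dx$ and applying Young's inequality to the $\int\hat u\,\res_{int}$ term gives
\begin{equation*}
\tfrac{d}{dt}\!\int_0^1 \hat u^2\,dx \;\le\; (1+2C_{f,u,u^\ast})\!\int_0^1 \hat u^2\,dx + \!\int_0^1 \res_{int}^2\,dx + 2\bar{\mathbf C}_b(\res_{sb,0}^2+\res_{sb,1}^2) + 2\nu{\mathbf C}_b(|\res_{sb,0}|+|\res_{sb,1}|).
\end{equation*}

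Next I would integrate in time on $[0,\bar T]$, using $\hat u(x,0)=\res_{tb}(x)$ for the initial contribution, bound $\int_0^T(|\res_{sb,0}|+|\res_{sb,1}|)\,dt \le T^{1/2}(\|\res_{sb,0}\|_{L^2(0,T)}+\|\res_{sb,1}\|_{L^2(0,T)})$ via Cauchy--Schwarz, and then apply the integral form of Gr\"onwall's inequality with constant $\mathbf C = 1+2C_{f,u,u^\ast}$, exactly as in the semilinear parabolic proof. A further integration in $\bar T\in[0,T]$ produces the factor $T+\mathbf CT^2 e^{\mathbf C T}$ multiplying $\|\res_{tb}\|_{L^2(D)}^2 + \|\res_{int}\|_{L^2(D_T)}^2 + 2\bar{\mathbf C}_b(\|\res_{sb,0}\|_{L^2(0,T)}^2+\|\res_{sb,1}\|_{L^2(0,T)}^2) + 2\nu{\mathbf C}_b T^{1/2}(\|\res_{sb,0}\|_{L^2(0,T)}+\|\res_{sb,1}\|_{L^2(0,T)})$.

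Finally, the $L^2$ integrals of the squared residuals are replaced by the components of the training error \eqref{eq:betrain} using the quadrature estimates \eqref{eq:hquad1}--\eqref{eq:hquad3} applied to $\res_{tb,\theta^\ast}^2$, $\res_{int,\theta^\ast}^2$, $\res_{sb,i,\theta^\ast}^2$; the terms linear in $\|\res_{sb,i}\|_{L^2(0,T)}$ pick up the square root of the quadrature error, producing the $N_{sb}^{-\alpha_{sb}/2}$ factor in \eqref{eq:begenb}. The main obstacle is treating the nonlinear convective term in a way that (i) avoids picking up $\|u^\ast_x\|_\infty$ inside $C_{f,u,u^\ast}$, and (ii) keeps all boundary contributions from $[f'(u)\hat u^2/2]_0^1$ under the single constant $\bar{\mathbf C}_b$ that depends only on $\|f'\|_\infty$ and $\|u^\ast\|_{C^0}$; the double integration-by-parts trick above is designed precisely to arrange this cancellation, and once it is in place the remainder of the argument is a direct energy/Gr\"onwall/quadrature computation parallel to Theorem~\ref{thm:heat}.
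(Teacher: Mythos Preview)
Your overall strategy---energy estimate, Gr\"onwall, quadrature---is the same as the paper's, and your treatment of the viscous boundary term and the boundary contribution from the flux is fine. The gap is precisely where you flag the ``main obstacle'': your double integration-by-parts does \emph{not} produce an interior constant depending only on $\|u_x\|_{L^\infty}$. Writing $f(u^\ast)-f(u)=F\hat u$ with $F(x,t)=\int_0^1 f'\bigl((1-\tau)u+\tau u^\ast\bigr)\,d\tau$, your computation gives
\[
-\int_0^1 \hat u\,(F\hat u)_x\,dx \;=\; -\tfrac12\bigl[F\hat u^2\bigr]_0^1 \;-\;\tfrac12\int_0^1 F_x\,\hat u^2\,dx,
\]
and $F_x=\int_0^1 f''\bigl((1-\tau)u+\tau u^\ast\bigr)\bigl((1-\tau)u_x+\tau u^\ast_x\bigr)\,d\tau$ unavoidably contains $u^\ast_x$. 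There is no cancellation; the interior constant you obtain depends on $\|u^\ast_x\|_{L^\infty}$ as well, and since this constant enters the Gr\"onwall exponential, you would not recover the stated $C_{f,u,u^\ast}$ in \eqref{eq:bc1}.

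The paper avoids this by a \emph{relative entropy} calculation rather than a direct energy estimate. It introduces the entropy flux $Q(v)=\int_a^v sf'(s)\,ds$ and derives separately the identities for $\partial_t\bigl((u^\ast)^2/2\bigr)+\partial_x Q(u^\ast)$, for $\partial_t(u^2/2)+\partial_x Q(u)$, and for $\partial_t(u\hat u)+\partial_x\bigl(u(f(u^\ast)-f(u))\bigr)$. Subtracting yields $\partial_t(\hat u^2/2)+\partial_x H = \res_{int}\hat u + T_1 + T_2$ with the relative entropy flux $H=Q(u^\ast)-Q(u)-u(f(u^\ast)-f(u))$ and, crucially, $T_1=-[f(u^\ast)-f(u)-f'(u)\hat u]\,u_x$. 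The Taylor remainder structure of $T_1$ gives $|T_1|\le |f''(\cdot)|\,\|u_x\|_{L^\infty}\hat u^2$ with no $u^\ast_x$. The boundary term $H(0,t)=Q(\res_{sb,0})-Q(0)$ is then estimated via the mean value theorem to give exactly the $\bar{\mathbf C}_b\res_{sb,0}^2$ contribution you want. In short: replace your $F$-based manipulation by the relative entropy identity, and the rest of your plan goes through verbatim.
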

\begin{proof}
We drop the $\theta^{\ast}$-dependence of the residuals \eqref{eq:bres1}-\eqref{eq:bres3} for notational convenience in the following. Define the \emph{entropy flux function},
\begin{equation}
    \label{eq:q}
    Q(u) = \int\limits_a^u s f^{\prime}(s) ds,
\end{equation}
for any $a \in \R$. Let $\hat{u} = u^{\ast}-u$ be the error with the PINN. From the PDE \eqref{eq:vscl} and the definition of the interior residual \eqref{eq:bres1}, we have the following identities,
\begin{equation}
    \label{eq:bpf1}
    \begin{aligned}
    \partial_t \left(\frac{(u^{\ast})^2}{2}\right) + \partial_x Q(u^{\ast}) &= \nu u^{\ast}u^{\ast}_{xx} + \res_{int} u^{\ast} \\ 
    \partial_t \left(\frac{u^2}{2}\right) + \partial_x Q(u) &= \nu uu_{xx}
    \end{aligned}
\end{equation}
A straightforward calculation with \eqref{eq:vscl} and \eqref{eq:bres1} yields,
\begin{equation}
    \label{eq:bpf2}
    \partial_t (u\hat{u}) + \partial_x\left(u\left(f(u^{\ast}) - f(u) \right)\right) = \left[f(u^{\ast}) - f(u) - f^{\prime}(u)\hat{u}\right]u_x +\res_{int} u + \nu \left(u\hat{u}_{xx} + \hat{u}u_{xx}\right).
\end{equation}
Subtracting the second equation of \eqref{eq:bpf1} and \eqref{eq:bpf2} from the first equation of \eqref{eq:bpf1} yields,
\begin{equation}
\label{eq:bpf3}
\partial_t S(u,u^{\ast}) + \partial_x H(u,u^{\ast}) = \res_{int} \hat{u} + T_1 + T_2,
\end{equation}
with, 
\begin{align*}
    S(u,u^{\ast})&:= \frac{(u^{\ast})^2}{2} - \frac{u^2}{2} - \hat{u}u = \frac{1}{2}\hat{u}^2,\\
    H(u,u^{\ast})&:= Q(u^{\ast}) - Q(u) - u(f(u^{\ast})-f(u)), \\
    T_1 &= -\left[f(u^{\ast}) - f(u) - f^{\prime}(u)\hat{u}\right]u_x, \\
    T_2 &= \nu \left(u^{\ast}u^{\ast}_{xx} - uu_{xx}-u\hat{u}_{xx} - \hat{u}u_{xx}\right) = \nu \hat{u} \hat{u}_{xx}.  
\end{align*}
As the flux $f$ is smooth, by a Taylor expansion, we see that 
\begin{equation}
    \label{eq:bpf4}
    T_1 = -f^{\prime \prime}(u + \gamma(u^{\ast}-u))\hat{u}^2 u_x,
\end{equation}
for some $\gamma \in (0,1)$. Hence, a straightforward estimate for $T_1$ is given by,
\begin{equation}
    \label{eq:bpf5}
    |T_1| \leq C_{f,u,u^{\ast}} \hat{u}^2, 
\end{equation}
with $C_{f,u,u^{\ast}}$ defined in \eqref{eq:bc1}. Next, we integrate \eqref{eq:bpf3} over the domain $(0,1)$ and integrate by parts to obtain,
\begin{equation}
    \label{eq:bpf6}
    \begin{aligned}
    \frac{d}{dt} \int_0^1 \hat{u}^2(x,t) dx &\leq 2 H(u(0,t),u^{\ast}(0,t)) - 2 H(u(1,t),u^{\ast}(1,t)) \\
    & + {\bf C} \int_0^1 \hat{u}^2(x,t) dx + \int_0^1 \res_{int}^2(x,t) dx, \\
    &-2\nu\int_0^1 \hat{u}^2_x(x,t) dx + 2\nu \left(\hat{u}(1,t)\hat{u}_x(1,t) - \hat{u}(0,t)\hat{u}_x(0,t)\right),
    \end{aligned}
\end{equation}
with the constant,
    ${\bf C} = 1 + 2 C_{f,u,u^{\ast}}$. 
    
Next, for any $\bar{T} \leq T$, we estimate the boundary terms starting with,
\begin{align*}
\int\limits_0^{\bar{T}} \hat{u}(0,t)\hat{u}_x(0,t) dt &= \int\limits_0^{\bar{T}} \res_{sb,0}(t) \left(u^{\ast}_x(0,t) - u_x(0,t)\right) dt \\
&\leq \underbrace{\left(\|u_x\|_{C([0,1]\times[0,T])} + \|u^{\ast}_x\|_{C([0,1]\times[0,T])}\right)}_{{\bf C}_b}T^{\frac{1}{2}}\left(\int_0^T 
\res_{sb,0}^2(t) dt \right)^{\frac{1}{2}}.
\end{align*}
Analogously we can estimate,
\begin{align*}
\int\limits_0^{\bar{T}} \hat{u}(1,t)\hat{u}_x(1,t) dt \leq {\bf C}_b T^{\frac{1}{2}}\left(\int_0^T 
\res_{sb,1}^2(t) dt \right)^{\frac{1}{2}}.
\end{align*}
We can also estimate from \eqref{eq:vscl} and \eqref{eq:hres2} that,
\begin{align*}
    H(u(0,t),u^{\ast}(0,t))&= Q(u^{\ast}(0,t)) - Q(u(0,t)) - u(0,t)(f(u^{\ast}(0,t)) - f(u(0,t))), \\
    &= Q(\res_{sb,0}(t)) - Q(0), \quad {\rm as}~ u(0,t) = 0, \\
    &= Q^{\prime}(\gamma_0\res_{sb,0}(t))\res_{sb,0}(t), \quad {\rm for~some}~\gamma_0 \in (0,1), \\
    &= \gamma_0f^{\prime}(\gamma_0u^{\ast}(0,t)) \res_{sb,0}^2(t), \quad {\rm by}~\eqref{eq:q}, \\ 
    &\leq \bar{{\bf C}}_b \res_{sb,0}^2(t),
\quad 
{\rm with}~ \bar{{\bf C}}_b = \bar{{\bf C}}_b\left(\|f^{\prime}\|_{\infty},\|u^{\ast}\|_{C^0([0,1] \times [0,T])}\right).
\end{align*}
Analogously, we can estimate,
\begin{align*}
   H(u(1,t),u^{\ast}(1,t)) \leq  \bar{{\bf C}}_b \res_{sb,1}^2(t).
\end{align*}
For any $\bar{T} \leq T$, integrating \eqref{eq:bpf6} over the time interval $[0,\bar{T}]$ and using the above inequalities on the boundary terms, together with the definition of the residual \eqref{eq:bres3} yields,
\begin{equation}
    \label{eq:bpf7}
    \begin{aligned}
    \int_0^1 \hat{u}^2(x,\bar{T}) dx &\leq \Cont + {\bf C}\int_0^{\bar{T}} \int_0^1 \hat{u}^2(x,t) dx dt, \\
    \Cont &= \int_0^1 \res_{tb}^2(x) dx + \int_0^T \int_0^1 \res_{int}^2(x,t) dx dt \\
    &+ 2 \bar{{\bf C}}_b\left[\int_0^T \res_{sb,0}^2(t) dt + \int_0^T \res_{sb,1}^2(t) dt\right] + 2\nu {\bf C}_b T^{\frac{1}{2}}\left[\left(\int_0^T \res_{sb,0}^2(t) dt\right)^{\frac{1}{2}} +\left(\int_0^T \res_{sb,1}^2(t) dt\right)^{\frac{1}{2}}\right] .
    \end{aligned}
\end{equation}
By applying the integral form of the Gr\"onwall's inequality to \eqref{eq:bpf7} for any $\bar{T} \leq T$ and integrating again over $\bar{T}$, together with the definition of the generalization error \eqref{eq:begen}, we obtain,
\begin{equation}
    \label{eq:bpf8}
\er_G^2 \leq \left(T + {\bf C}T^2e^{{\bf C}T}\right)\Cont.
\end{equation}
Using the bounds \eqref{eq:hquad1}-\eqref{eq:hquad3} on the quadrature errors and the definition of $\Cont$ in \eqref{eq:bpf7}, we obtain,
\begin{align*}
    \Cont &\leq \sum\limits_{n=1}^{N_{tb}} w^{tb}_n|\res_{tb}(x_n)|^2 + C^{tb}_{qaud}\left(\|\res_{tb}\|_{C^k}\right) N_{tb}^{-\alpha_{tb}} \\
    &+ \sum\limits_{n=1}^{N_{int}} w^{int}_n|\res_{int}(x_n,t_n)|^2 + C^{int}_{qaud}\left(\|\res_{int}\|_{C^{k-2}}\right) N_{int}^{-\alpha_{int}}, \\
    &+ 2\bar{{\bf C}}_b \left[\sum\limits_{n=1}^{N_{sb}} w^{sb}_n|\res_{sb,0}(t_n)|^2 + \sum\limits_{n=1}^{N_{sb}} w^{sb}_n|\res_{sb,1}(t_n)|^2 +  \left(C^{sb}_{qaud}\left(\|\res_{sb,0}\|_{C^k}\right)+ C^{sb}_{qaud}\left(\|\res_{sb,1}\|_{C^k}\right)\right) N_{sb}^{-\alpha_{sb}}             \right] \\
    &+ 2\nu{\bf C}_b T^{\frac{1}{2}}\left[\left(\sum\limits_{n=1}^{N_{sb}} w^{sb}_n|\res_{sb,0}(t_n)|^2\right)^{\frac{1}{2}} + \left(\sum\limits_{n=1}^{N_{sb}} w^{sb}_n|\res_{sb,1}(t_n)|^2 \right)^{\frac{1}{2}}+  \left(C^{sb}_{qaud}\left(\|\res_{sb,0}\|_{C^k}\right)+ C^{sb}_{qaud}\left(\|\res_{sb,1}\|_{C^k}\right)\right)^{\frac{1}{2}} N_{sb}^{-\frac{\alpha_{sb}}{2}}             \right].
\end{align*}
From definition of training errors \eqref{eq:betrain} and \eqref{eq:bpf8} and the above inequality, we obtain the desired estimate \eqref{eq:begenb}. 
\end{proof}
\begin{remark}
The estimate \eqref{eq:begenb} is a concrete realization of the abstract estimate \eqref{eq:egenb}, with training error decomposed into 4 parts, the constants, associated with the PDE, are given by $C_{f,u,u^{\ast}},{\bf C}_b$ \eqref{eq:bc1} and the constants due to the quadrature errors are also clearly delineated.
\end{remark}
\begin{remark}
\label{rem:burg}
A close inspection of the estimate \eqref{eq:begenb} reveals that at the very least, the classical solution $u$ of the PDE \eqref{eq:vscl} needs to be in $L^{\infty}((0,T);W^{1,\infty}((0,1)))$ for the rhs of \eqref{eq:begenb} to be bounded. This indeed holds as long as $\nu > 0$. However, it is well known (see \cite{GRbook} and references therein) that if $u^{\nu}$ is the solution of \eqref{eq:vscl} for viscosity $\nu$, then for some initial data,
\begin{equation}
    \label{eq:bbup}
    \|u^{\nu}\|_{L^{\infty}((0,T);W^{1,\infty}((0,1)))} \sim \frac{1}{\sqrt{\nu}}.
\end{equation}
Thus, in the limit $\nu \rightarrow 0$, the constant $C_{f,u,u^{\ast}}$ can blow up (exponentially in time) and the bound \eqref{eq:begenb} no longer controls the generalization error. This is not unexpected as the whole strategy of this paper relies on pointwise realization of residuals. However, the
zero-viscosity limit of \eqref{eq:vscl}, leads to a scalar conservation law with discontinuous solutions (shocks) and the residuals are measures that do not make sense pointwise. Thus, the estimate \eqref{eq:begenb} also points out the limitations of a PINN for approximating discontinuous solutions. 
\end{remark}
\subsection{Numerical experiments}
\begin{figure}[h!]
    \begin{subfigure}{.49\textwidth}
        \centering
        \includegraphics[width=1\linewidth]{{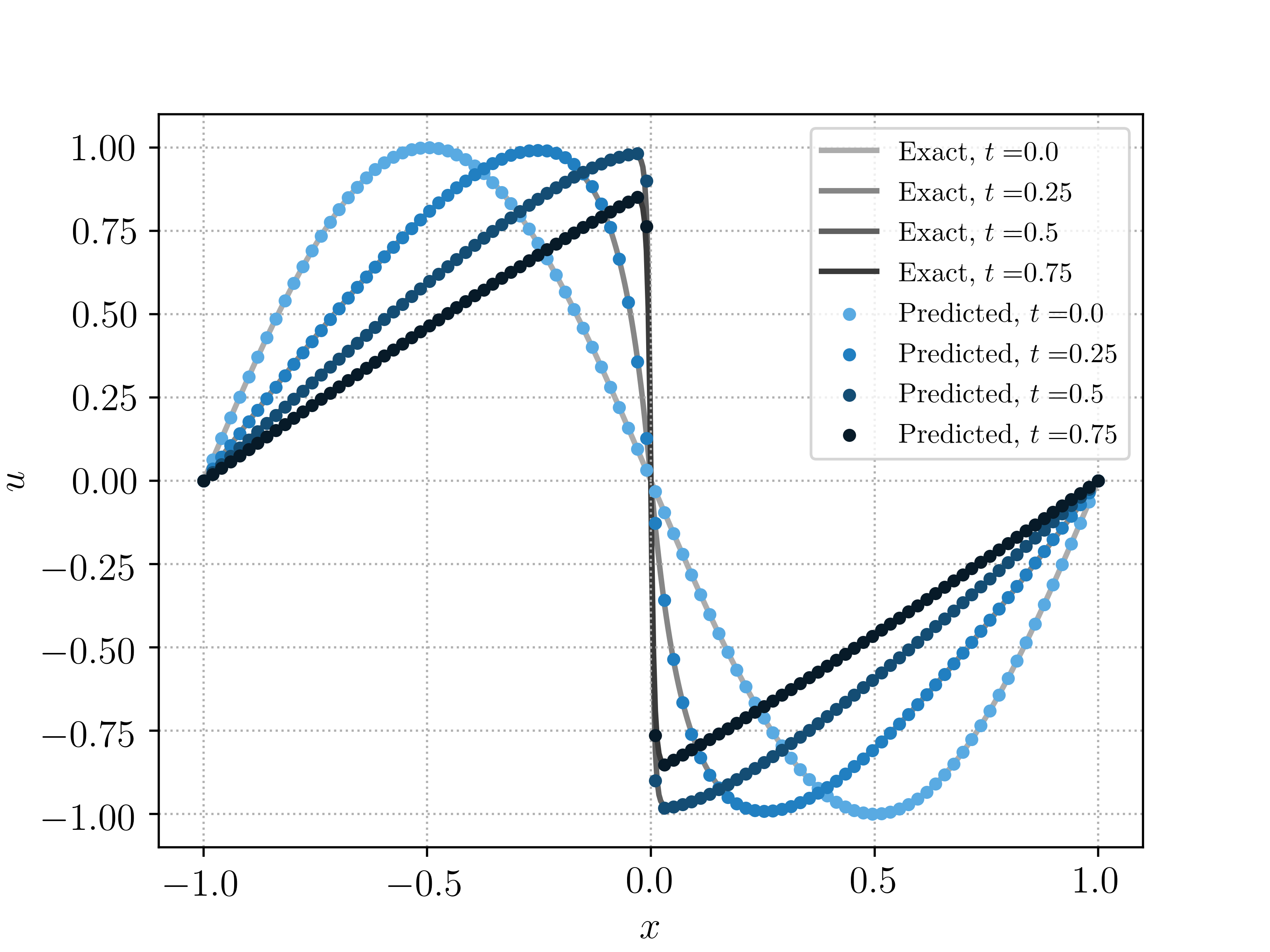}}
        \caption{$\nu = \frac{0.01}{\pi}$, $\er^r_G=0.010$}
    \end{subfigure}
    \begin{subfigure}{.49\textwidth}
        \centering\
        \includegraphics[width=1\linewidth]{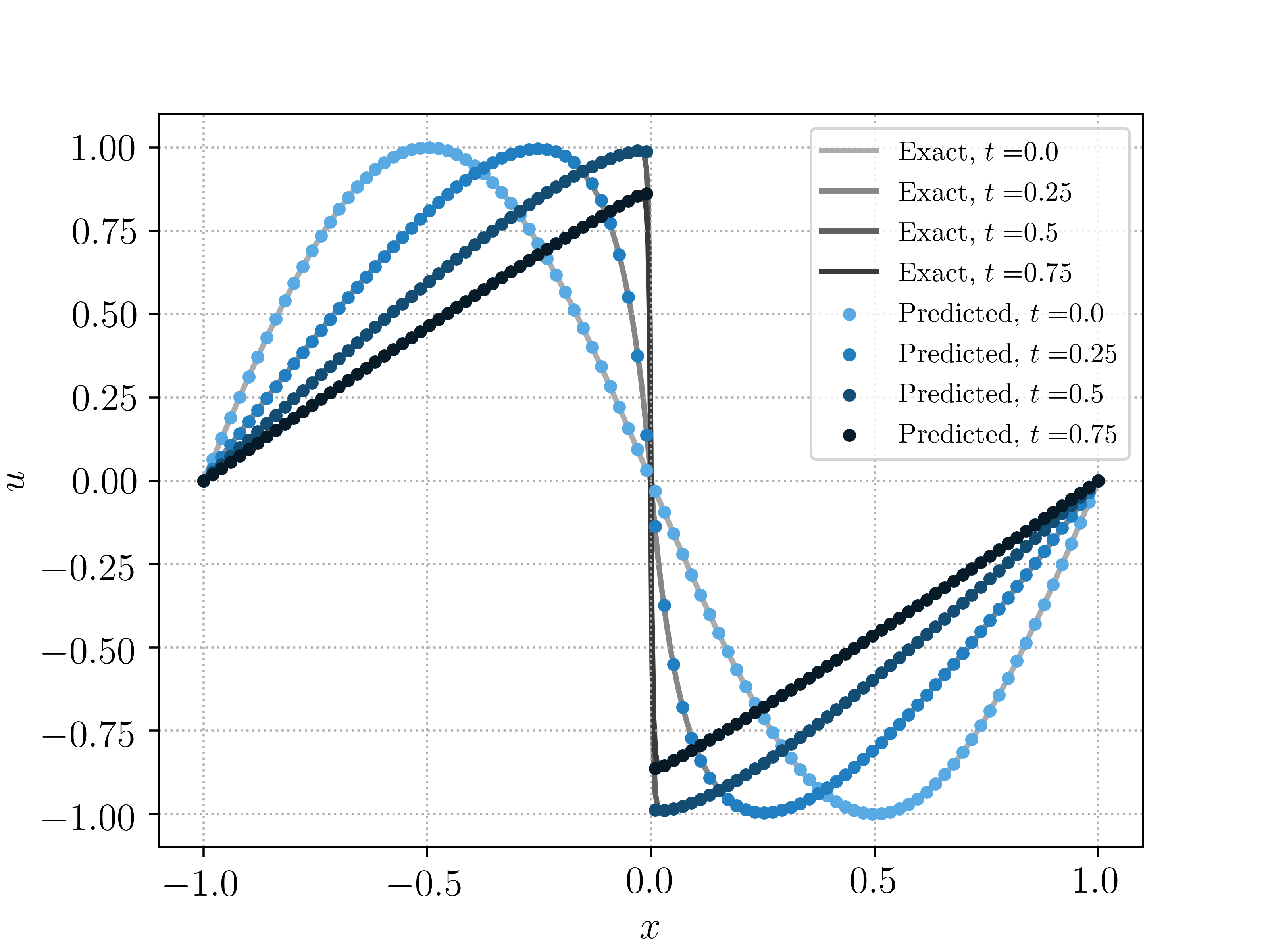}
        \caption{$\nu = \frac{0.005}{\pi}$, $\er^r_G=0.012$}
    \end{subfigure}
    
    \begin{subfigure}{.49\textwidth}
        \centering\
        \includegraphics[width=1\linewidth]{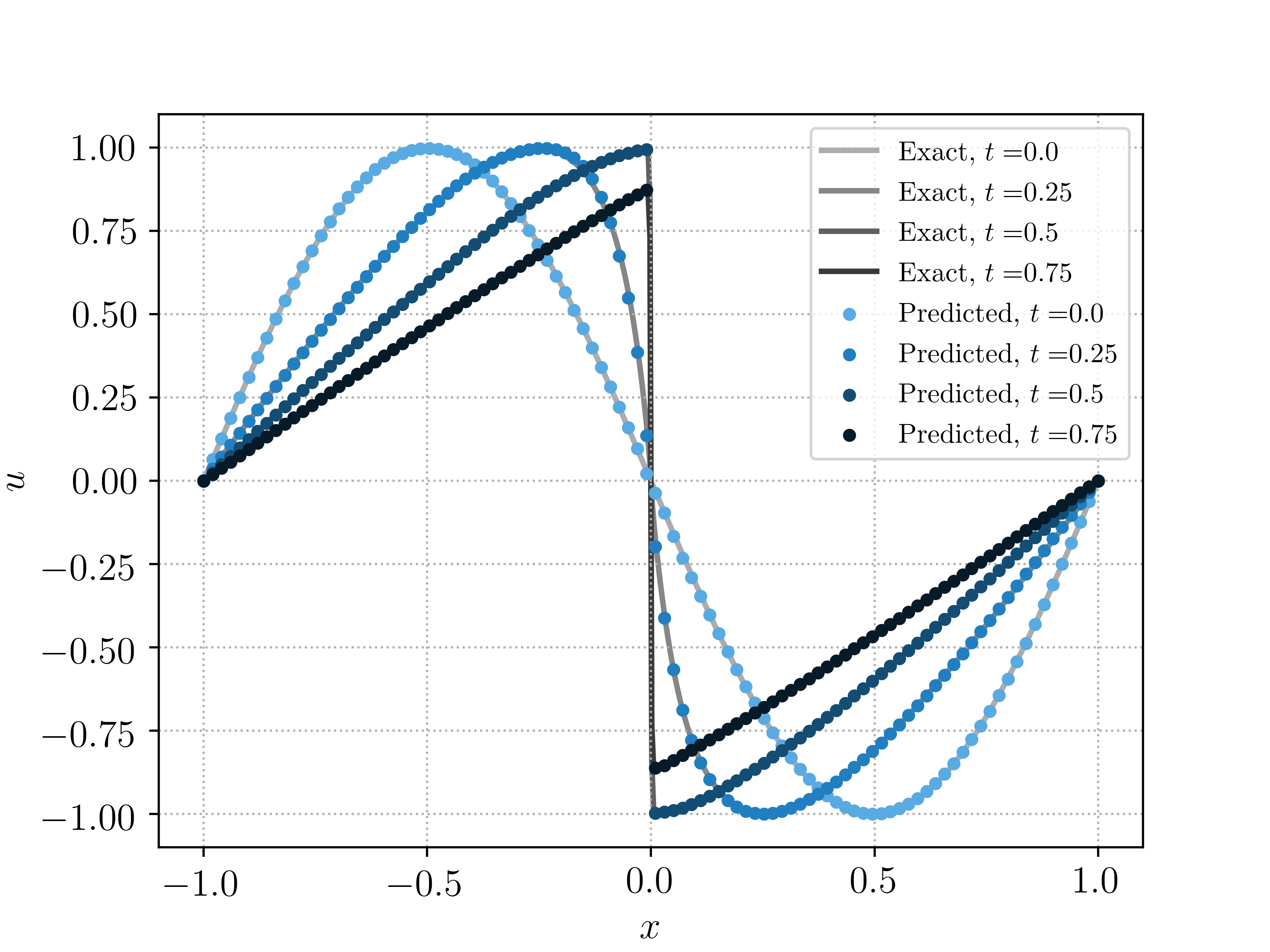}
        \caption{$\nu = \frac{0.001}{\pi}$, $\er^r_G=0.11$}
    \end{subfigure}
    \begin{subfigure}{.49\textwidth}
        \centering\
        \includegraphics[width=1\linewidth]{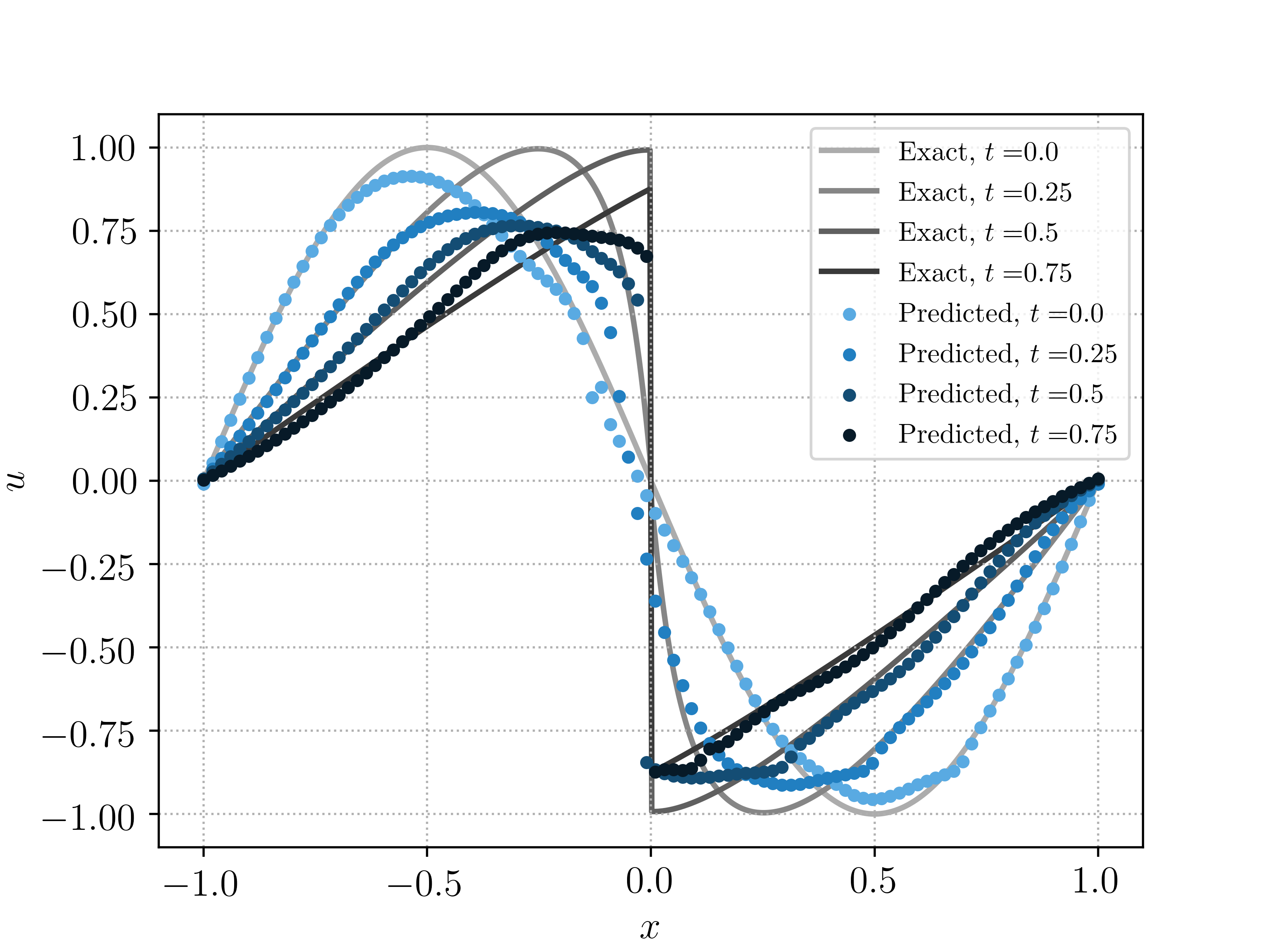}
        \caption{$\nu = 0$, $\er^r_G=0.23$}
    \end{subfigure}
    \caption{Burgers equation with discontinuos solution for different values of $\nu$}
    \label{fig:burg1}
\end{figure}
We consider the viscous scalar conservation law \eqref{eq:vscl}, but in the domain $D_T = [-1,1]\times[0,1]$, with initial conditions,
$
\bar{u}(x) = -\sin(\pi x)$ and zero Dirichlet boundary conditions. We choose the flux function $f(u) = \frac{u^2}{2}$, resulting in the well-known \emph{viscous Burgers'} equation. 

This problem is considered for $4$ different values of the viscosity parameter $\nu = \frac{c}{\pi}$, with $c= $ 0.01, 0.005, 0.001, 0.0, and with $N_{int}=8192$, $N_{tb}=256$, $N_{sb}=256$ points. All the training points are chosen as low-discrepancy Sobol sequences on the underlying domains. An ensemble training procedure, based on the hyperparameters presented in Table \ref{tab:1}, is performed and the best performing hyperparameters i.e. those that led to the smallest training errors, are chosen and presented in Table \ref{tab:burg}. 

In figure \ref{fig:burg1}, we present the \emph{reference} solution field $u(\cdot,t)$, at different time snapshots, of the viscous Burgers' equation computed with a simple upwind finite volume scheme and forward Euler time integration with $2\times10^{6}$ Cartesian  grid points in space-time, and the predicted solution $u^{\ast}(\cdot,t)$ of the PINN, generated with algorithm \ref{alg:PINN}, corresponding to the best performing hyperparameters (see Table \ref{tab:burg}), for different values of the viscosity coefficient. From this figure, we observe that for the viscosity coefficients, corresponding to $c=0.01,0.005$, the approximate solution, predicted by the PINN, approximates the underlying exact solution, which involves self steepening of the initial sine wave into a steady sharp profile at the origin, very well. This is further reinforced by the very low (relative percentage) generalization errors of approximately $1\%$, presented in Table \ref{tab:burg}. However, this efficient approximation is no longer the case for the inviscid problem i.e $\nu = 0$. As seen from figure \ref{fig:burg1} (bottom right), the PINN fails to resolve the solution, which in this case, consists of a steady shock at the origin. In fact, this failure to approximate is already seen with a viscosity coefficient of $\nu = \frac{0.001}{\pi}$. For this very low viscosity coefficient, we see that the relative generalization error is approximately $11 \%$. The generalization error rises to $23 \%$ for the inviscid Burgers' equation. This increase in error appears consistent with the bound \eqref{eq:begenb}, combined with the blow-up estimate \eqref{eq:bbup} for the derivatives of the viscous Burgers' equation. As the viscosity $\nu \rightarrow 0$, the bounds in the rhs of \eqref{eq:begenb} can increase exponentially, which appears to be the case here.

\begin{figure}[h!]
    \begin{subfigure}{.49\textwidth}
        \centering
        \includegraphics[width=1\linewidth]{{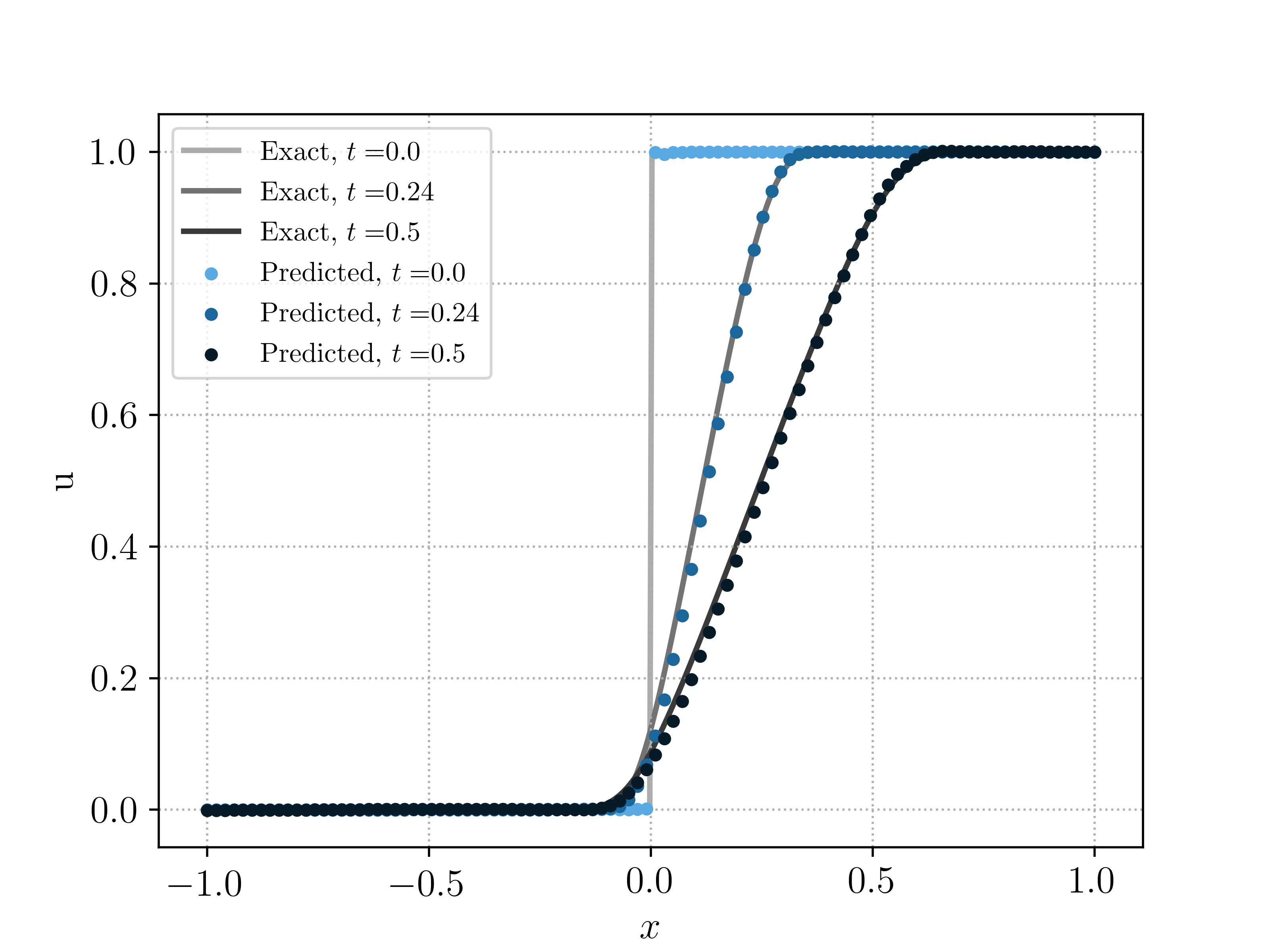}}
        \caption{$\nu = \frac{0.01}{\pi}$, $\er^r_G=0.022$}
    \end{subfigure}
    \begin{subfigure}{.49\textwidth}
        \centering\
        \includegraphics[width=1\linewidth]{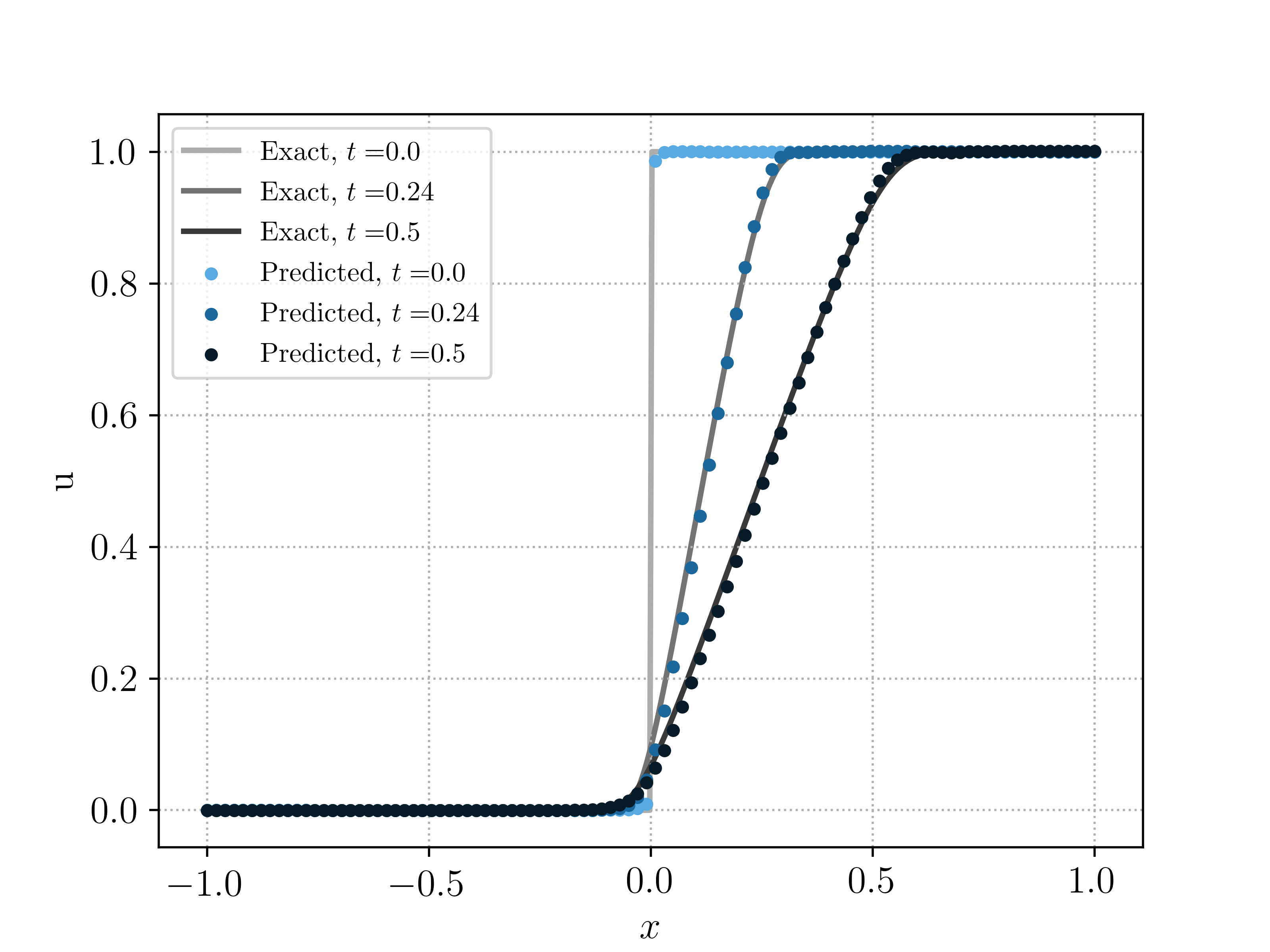}
        \caption{$\nu = \frac{0.005}{\pi}$, $\er^r_G=0.018$}
    \end{subfigure}
    
    \begin{subfigure}{.49\textwidth}
        \centering\
        \includegraphics[width=1\linewidth]{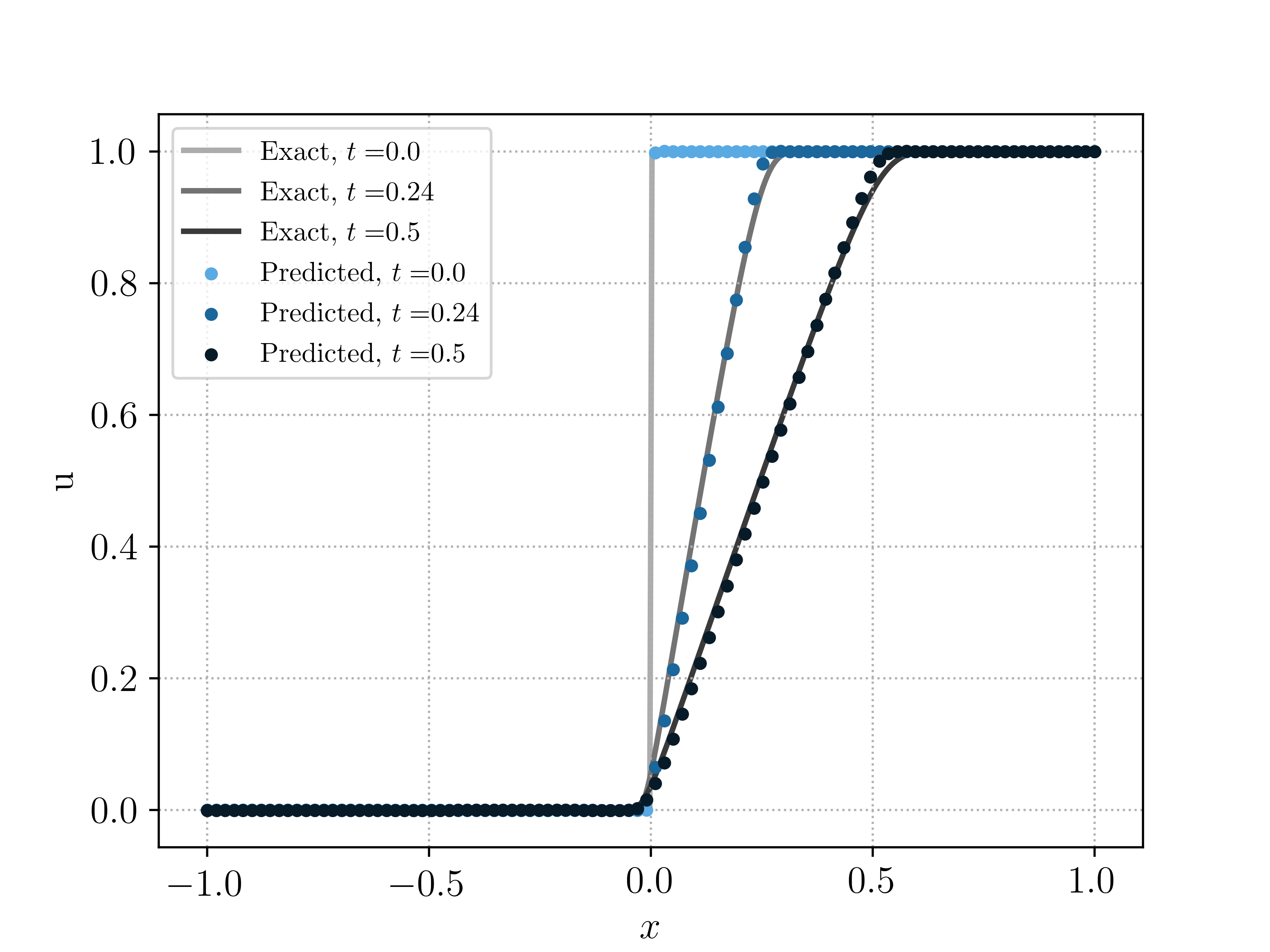}
        \caption{$\nu = \frac{0.001}{\pi}$, $\er^r_G=0.016$}
    \end{subfigure}
    \begin{subfigure}{.49\textwidth}
        \centering\
        \includegraphics[width=1\linewidth]{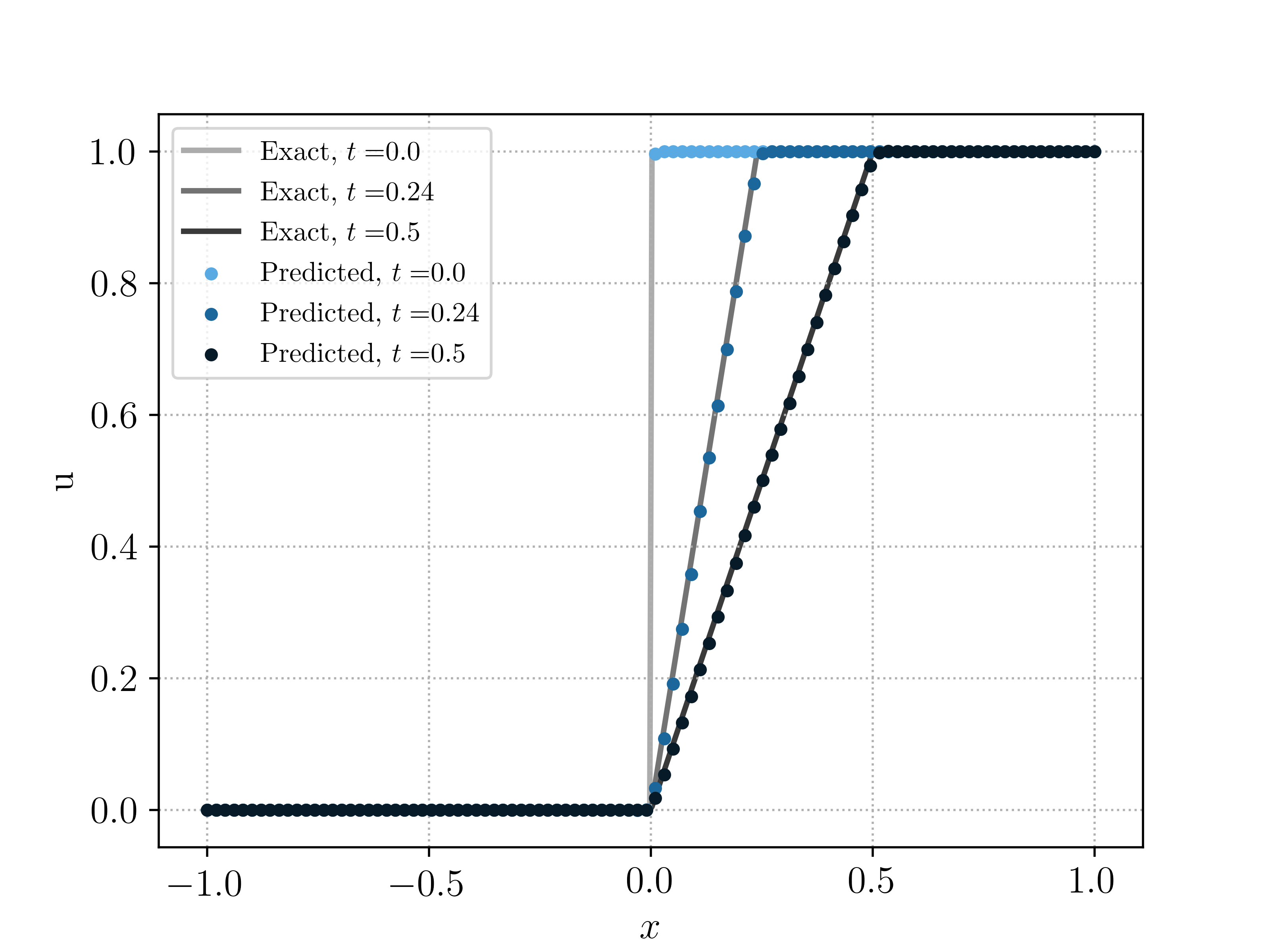}
        \caption{$\nu = 0$, $\er^r_G=0.012$}
    \end{subfigure}
    \caption{Burgers equation with rarefaction wave for different values of $\nu$}
    \label{fig:burg2}
\end{figure}
To further test the bound \eqref{eq:begenb}'s ability to explain performance of PINNs for the viscous Burgers' equation, we consider the following initial and boundary conditions,
\[   
\bar{u}(x) = 
     \begin{cases}
       0, &\quad\text{if } x\leq 0 \\
       1, &\quad\text{if } x> 0\\
     \end{cases}\quad \forall ~x \in [-1,1],
\]
\begin{equation}
     u(t,-1) =0, ~ u(t,1)=1, ~ \forall ~t \in [0,0.5].
\end{equation}
Given the discontinuity in the initial data we train the PINNS with a larger number of boundary training samples $N_{tb}=512$ and $N_{sb}=512$, while leaving $N_{int}= 8192$, unchanged. As in the previous experiments, training sets are Sobol sequences and an ensemble training is preformed to configure the network architecture. The results are summarized in Table \ref{tab:rar} and figure \ref{fig:burg2}. In this case, the exact solution is a so-called rarefaction wave (see figure \ref{fig:burg2}
for the reference solution, computed in the manner, analogous to the previous numerical experiment) and the gradient of the solution remains bounded, uniformly as the viscosity coefficient $\nu \rightarrow 0$. Hence, from the bound \eqref{eq:begenb}, we expect that PINNs will efficiently approximate the underlying solution for all values of the viscosity coefficient. This is indeed verified in the solution snapshots, presented figure \ref{fig:burg2}, where we observe that the PINN approximates the reference solution quite well, for all values of the viscosity coefficient. This behavior is further verified in table \ref{tab:rar}, where we see that the generalization error \eqref{eq:begen}, remains low (less than $2\%$) for all the values of viscosity and in fact, reduces slightly as $\nu \rightarrow 0$, completely validating the error estimate \eqref{eq:begenb}.

\begin{table}[htbp] 
    \centering
    \renewcommand{\arraystretch}{1.1} 
    
    \footnotesize{
        \begin{tabular}{ c c c c c c c c c c c} 
            \toprule
            \bfseries $\nu$  &\bfseries $N_{int}$  & \bfseries $N_{sb}$& \bfseries $N_{tb}$  &\bfseries $K-1$ & \bfseries $d$  &\bfseries $L^1$-reg. &\bfseries $L^2$-reg. &\bfseries $\lambda$ &\bfseries $\er_T$ &\bfseries $\er_G^r$ \\ 
            \midrule
            \midrule
            $0.01/pi$            & 8192   & 256 & 256 &8&20& 0.0& 0.0& 0.1 & 0.0005& 1.0\% \\
            \midrule 
              $0.005/pi$  & 8192  & 256 & 256 &10&20& 0.0& 0.0& 0.1 &0.00075& 1.2\% \\
                \midrule 
              $0.001/pi$   & 8192  & 256& 256&10 &20& 0.0& $10^{-6}$& 0.1 &0.009& 11.0\%\\
                \midrule 
              $0.0$   & 8192 & 256& 256 &8 &24& 0.0& $10^{-5}$& 0.1 &0.08& 23.0\%\\

            \bottomrule
        \end{tabular}
    \caption{Best performing \textit{Neural Network} configurations for the Burgers equation with shock, for different values of the parameter $\nu$. }
        \label{tab:burg}
    }
\end{table}
\begin{table}[htbp] 
    \centering
    \renewcommand{\arraystretch}{1.1} 
    
    \footnotesize{
        \begin{tabular}{ c c c c c c c c c c c} 
            \toprule
            \bfseries $\nu$  &\bfseries $N_{int}$  & \bfseries $N_{sb}$& \bfseries $N_{tb}$  &\bfseries $K-1$ & \bfseries $d$  &\bfseries $L^1$-reg. &\bfseries $L^2$-reg. &\bfseries $\lambda$ &\bfseries $\er_T$ &\bfseries $\er_G^r$ \\ 
            \midrule
            \midrule
            $0.01/pi$            & 8192   & 512 & 512 &4&20& 0.0& 0.0& 0.1 & 0.0043& 2.2\% \\
            \midrule 
              $0.005/pi$  & 8192  & 512 & 512 &4&20& 0.0& 0.0& 0.1 &0.0034& 1.8\% \\
                \midrule 
              $0.001/pi$   & 8192  & 512& 512&4 &16& 0.0& 0.0& 0.1 &0.00048& 1.6\%\\
                \midrule 
              $0.0$   & 8192 & 512& 512 &4 &20& 0.0& 0.0& 0.1 &0.00033& 1.2\%\\

            \bottomrule
        \end{tabular}
    \caption{Best performing \textit{Neural Network} configurations for the Burgers equation with rarefaction wave, for different values of the parameter $\nu$. }
        \label{tab:rar}
    }
\end{table}
\section{Incompressible Euler equations}
\label{sec:5}
\subsection{The underlying PDE}
The motion of an inviscid, incompressible fluid is modeled by the incompressible Euler equations \cite{MBbook}. We consider the following form of these PDEs,
\begin{equation}
    \label{eq:ie}
    \begin{aligned}
    \bu_t + \left(\bu \cdot \nabla\right)\bu + \nabla p &= \f, \quad (x,t) \in D \times (0,T), \\
    {\rm div}(\bu) &= 0, \quad (x,t) \in D \times (0,T), \\
    \bu\cdot\nl &= 0, \quad (x,t) \in \bD \times (0,T), \\
    \bu(x,0) &= \bar{\bu}(x), \quad x \in D.
    \end{aligned}
\end{equation}
Here, $D \subset \R^d$, for $d=2,3$ is an open, bounded, connected subset with smooth $C^1$ boundary $\bD$, $D_T = D \times (0,T)$, $\bu: D_T \mapsto \R^d$ is the velocity field, $p: D_T \mapsto \R$ is the pressure that acts as a Lagrange multiplier to enforce the divergence constraint and $\f \in C^1(D_T;\R^d)$ is a forcing term. We use the \emph{no penetration} boundary conditions here with $\nl$ denoting the unit outward normal to $\bD$. 

Note that we have chosen to present this form of the incompressible Euler equations for simplicity of exposition. The analysis, presented below, can be readily but tediously extended to the following,
\begin{itemize}
\item Other boundary conditions such as periodic boundary conditions on the torus ${\mathbb T}^d$.
\item The \emph{Navier-Stokes equations}, where we add the \emph{viscous term} $\nu \Delta \bu$ to the first equation in \eqref{eq:ie}, with either periodic boundary conditions or the so-called \emph{no slip} boundary conditions i.e, $\bu \equiv 0$, for all $x \in \bD$ and for all $t \in (0,T]$.
\end{itemize}
\subsection{PINNs}
We describe the algorithm \ref{alg:PINN} for this PDE in the following steps,
\subsubsection{Training set}
We chose the training set $\train \subset D_T$ with $\train = \train_{int} \cup \train_{sb} \cup \train_{tb}$, with interior, spatial and temporal boundary training sets, chosen exactly as in section \ref{sec:tset}, either as quadrature points for a (composite) Gauss rule or as low-discrepancy sequences on the underlying domains. 
\subsubsection{Residuals}
For the neural networks $(x,t) \mapsto \left(\bu_{\theta}(x,t),p_{\theta}(x,t)\right) \in C^k((0,T)\times D) \cap  C([0,T]\times \bar{D})$, defined by \eqref{eq:ann1}, with a smooth activation function and $\theta \in \Theta$ as the set of tuning parameters, we define the residual $\res$ in algorithm \ref{alg:PINN}, consisting of the following parts,
\begin{itemize}
    \item \emph{Velocity residual} given by,
    \begin{equation}
        \label{eq:ires1}
        \res_{\bu,\theta}(x,t):= (\bu_{\theta})_t + \left(\bu_{\theta} \cdot \nabla\right)\bu_{\theta} + \nabla p_{\theta}  - \f, \quad (x,t) \in D \times (0,T),
    \end{equation}
    \item \emph{Divergence residual} given by,
    \begin{equation}
        \label{eq:ires2}
        \res_{div,\theta}(x,t):= {\rm div}(\bu_{\theta}(x,t)), \quad (x,t) \in D \times (0,T),
    \end{equation}
  \item  \emph{Spatial boundary Residual} given by,
\begin{equation}
    \label{eq:ires3}
    \res_{sb,\theta}(x,t):= \bu_{\theta}(x,t)\cdot \nl, \quad \forall x \in \bD, ~ t \in (0,T]. 
\end{equation}
\item \emph{Temporal boundary Residual} given by,
\begin{equation}
    \label{eq:ires4}
    \res_{tb,\theta}(x):= \bu_{\theta}(x,0) - \bar{\bu}(x), \quad \forall x \in D. 
\end{equation}
\end{itemize}
As the underlying neural networks have the required regularity, the residuals are well-defined. 
\subsubsection{Loss function}
We consider the following loss function for training PINNs to approximate the incompressible Euler equation \eqref{eq:ie},
\begin{equation}
    \label{eq:ilf}
    J(\theta):= \sum\limits_{n=1}^{N_{tb}} w^{tb}_n|\res_{tb,\theta}(x_n)|^2 + \sum\limits_{n=1}^{N_{sb}} w^{sb}_n|\res_{sb,\theta}(x_n,t_n)|^2 + \lambda \left( \sum\limits_{n=1}^{N_{int}} w^{int}_n|\res_{\bu,\theta}(x_n,t_n)|^2+\sum\limits_{n=1}^{N_{int}} w^{int}_n|\res_{div,\theta}(x_n,t_n)|^2 \right).
\end{equation}
Here the residuals are defined by \eqref{eq:ires1}-\eqref{eq:ires4}. $w^{tb}_n$ are the $N_{tb}$ quadrature weights corresponding to the temporal boundary training points $\train_{tb}$, $w^{sb}_n$ are the $N_{sb}$ quadrature weights corresponding to the spatial boundary training points $\train_{sb}$ and $w^{int}_n$ are the $N_{int}$ quadrature weights corresponding to the interior training points $\train_{int}$. Furthermore, $\lambda$ is a hyperparameter for balancing the residuals, on account of the PDE and the initial and boundary data, respectively. 
\subsection{Estimate on the generalization error.}
We denote the PINN, obtained by the algorithm \ref{alg:PINN}, for approximating the incompressible Euler equations, as $\bu^{\ast}= \bu_{\theta^{\ast}}$, with $\theta^{\ast}$ being a (approximate,local) minimum of the loss function \eqref{eq:lf2},\eqref{eq:ilf}. We consider the following generalization error,
\begin{equation}
    \label{eq:iegen}
    \er_{G}:= \left(\int\limits_0^T \int\limits_D \|\bu(x,t) - \bu^{\ast}(x,t)\|^2 dx dt \right)^{\frac{1}{2}},
\end{equation}
with $\|\cdot\|$ denoting the Euclidean norm in $\R^d$. Note that we only consider the error with respect to the velocity field $\bu$ in \eqref{eq:iegen}. Although the pressure $p$ in \eqref{eq:ie} is approximated by the neural network $p^{\ast} = p_{\theta^{\ast}}$, we recall that the pressure is a Lagrange multiplier, and not a primary variable in the incompressible Euler equations.  Hence, we will not consider pressure errors here.

As in section \ref{sec:2}, we will bound the generalization error in terms of the following \emph{training errors},
\begin{equation}
    \label{eq:ietrain}
    \er_T^2:= \underbrace{\sum\limits_{n=1}^{N_{tb}} w^{tb}_n|\res_{tb,\theta^{\ast}}(x_n)|^2}_{\left(\er_T^{tb}\right)^2} + \underbrace{\sum\limits_{n=1}^{N_{sb}} w^{sb}_n|\res_{sb,\theta^{\ast}}(x_n,t_n)|^2}_{\left(\er_T^{sb}\right)^2} + \underbrace{\sum\limits_{n=1}^{N_{int}} w^{int}_n|\res_{\bu,\theta^{\ast}}(x_n,t_n)|^2}_{\left(\er_T^{\bu}\right)^2}+\lambda\underbrace{\sum\limits_{n=1}^{N_{int}} w^{int}_n|\res_{div,\theta^{\ast}}(x_n,t_n)|^2}_{\left(\er_T^{d}\right)^2}.
\end{equation}
As in the previous sections, the training errors can be readily computed \emph{a posteriori} from the loss function \eqref{eq:lf2}, \eqref{eq:ilf}. 

We have the following bound on the generalization error in terms of the training error,
\begin{theorem}
\label{thm:euler}
Let $\bu \in C^1((0,T)\times D) \cap C([0,T]\times \bar{D})$ be the classical solution of the incompressible Euler equations \eqref{eq:ie}. Let $\bu^{\ast} = \bu_{\theta^{\ast}}, p^{\ast} = p_{\theta^{\ast}}$ be the PINN generated by algorithm \ref{alg:PINN}, then the resulting generalization error \eqref{eq:iegen} is bounded as,\begin{equation}
    \label{eq:iegenb}
    \begin{aligned}
        \er_G^2 &\leq \left(T+C_{\infty}T^2e^{C_{\infty}T}\right) \left[\left(\er^{tb}_T\right)^2 + \left(\er^{\bu}_T\right)^2 +  C_0T^{\frac{1}{2}}\left(\er_T^{div} + \er_T^{sb}\right)      \right] \\
        &+ \left(T+C_{\infty}T^2e^{C_{\infty}T}\right) \left[C^{tb}_{qaud} N_{tb}^{-\alpha_{tb}} +C^{int,\bu}_{qaud} N_{int}^{-\alpha_{int}} + \left(C^{int,div}_{qaud}\right)^{\frac{1}{2}}  N_{int}^{-\frac{\alpha_{int}}{2}} + \left(C^{sb}_{qaud}\right)^{\frac{1}{2}}  N_{sb}^{-\frac{\alpha_{sb}}{2}}\right].
    \end{aligned}
\end{equation}
Here, the training errors are defined in \eqref{eq:ietrain} and the constants are given by,
\begin{equation}
    \label{eq:cons}
    \begin{aligned}
    C_0 &= C\left(\|\bu\|_{C^0([0,T]\times \bar{D})},\|\bu^{
    \ast}\|_{C^0([0,T]\times \bar{D})}, \|p\|_{C^0([0,T]\times \bar{D})}, \|p^{\ast}\|_{C^0([0,T]\times \bar{D})}\right), \\
    C_{\infty} &= 1 + 2C_d \|\nabla \bu\|_{L^{\infty}(D_T)}, 
\end{aligned}
\end{equation}
with $C_d$ only depending on dimension $d$ and $C^{tb}_{quad} = C^{tb}_{qaud}\left(\|\res^2_{tb,\theta^{\ast}}\|_{C^k}\right)$, $C^{int,\bu}_{quad} = C^{int}_{qaud}\left(\|\res^2_{\bu,\theta^{\ast}}\|_{C^{k-1}}\right)$, $C^{int,div}_{quad} = C^{int}_{qaud}\left(\|\res^2_{div,\theta^{\ast}}\|_{C^{k-1}}\right)$ and $C^{sb}_{quad} = C^{sb}_{qaud}\left(\|\res^2_{sb,\theta^{\ast}}\|_{C^k}\right)$ are the constants associated with the quadrature errors \eqref{eq:hquad1}-\eqref{eq:hquad3}. 
\end{theorem}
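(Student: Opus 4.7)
The plan is to mimic the energy-method proof of Theorem \ref{thm:burg}, adapted to the vector-valued incompressible setting. For notational brevity I write $\res_{\bu},\res_{div},\res_{sb},\res_{tb}$ for the residuals evaluated at $\theta^\ast$, and set $\hat{\bu}:=\bu^\ast-\bu$, $\hat p:=p^\ast-p$. Subtracting \eqref{eq:ie} from the residual definitions \eqref{eq:ires1}--\eqref{eq:ires4} yields the error system
\begin{equation*}
\hat{\bu}_t + (\bu^\ast\!\cdot\!\nabla)\bu^\ast - (\bu\!\cdot\!\nabla)\bu + \nabla\hat p = \res_{\bu},\qquad \div(\bu^\ast)=\res_{div},
\end{equation*}
with $\hat{\bu}\cdot\nl = \res_{sb}$ on $\bD$ (since $\bu\cdot\nl=0$) and $\hat{\bu}(\cdot,0)=\res_{tb}$.

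The key algebraic step is to rewrite the nonlinear term as
\begin{equation*}
(\bu^\ast\!\cdot\!\nabla)\bu^\ast - (\bu\!\cdot\!\nabla)\bu = (\hat{\bu}\!\cdot\!\nabla)\bu + (\bu^\ast\!\cdot\!\nabla)\hat{\bu}.
\end{equation*}
I would then take the $L^2(D)$ inner product of the velocity error equation with $\hat{\bu}$. The first nonlinear piece gives a term bounded by $C_d\|\nabla\bu\|_{L^\infty}\|\hat{\bu}\|_{L^2}^2$. For the second piece I integrate by parts,
\begin{equation*}
\int_D \hat{\bu}\cdot(\bu^\ast\!\cdot\!\nabla)\hat{\bu}\,dx = \tfrac{1}{2}\int_D (\bu^\ast\!\cdot\!\nabla)|\hat{\bu}|^2\,dx = -\tfrac{1}{2}\int_D \res_{div}|\hat{\bu}|^2\,dx + \tfrac{1}{2}\int_{\bD} \res_{sb}|\hat{\bu}|^2\,ds,
\end{equation*}
using $\div(\bu^\ast)=\res_{div}$ and $\bu^\ast\!\cdot\!\nl=\res_{sb}$. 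The pressure term is treated the same way:
\begin{equation*}
\int_D \hat{\bu}\cdot\nabla\hat p\,dx = -\int_D \res_{div}\,\hat p\,dx + \int_{\bD}\res_{sb}\,\hat p\,ds.
\end{equation*}
Here the trick is that the inexactness of the divergence constraint and the boundary condition, which would be zero for the exact solution, instead produces terms controlled by $\res_{div}$ and $\res_{sb}$ multiplied by $L^\infty$-bounded quantities (the $L^\infty$ bounds on $\bu,\bu^\ast,p,p^\ast$ are exactly what defines $C_0$).

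Collecting terms, Cauchy--Schwarz on the $\res_\bu\cdot\hat{\bu}$ and boundary/divergence residual contributions, together with $\|\hat{\bu}\|_{L^\infty}\leq \|\bu\|_{L^\infty}+\|\bu^\ast\|_{L^\infty}$, yields an inequality of the form
\begin{equation*}
\tfrac{d}{dt}\|\hat{\bu}(\cdot,t)\|_{L^2(D)}^2 \leq C_\infty\|\hat{\bu}(\cdot,t)\|_{L^2(D)}^2 + \|\res_\bu(\cdot,t)\|_{L^2(D)}^2 + C_0\bigl(\|\res_{div}(\cdot,t)\|_{L^2(D)} + \|\res_{sb}(\cdot,t)\|_{L^2(\bD)}\bigr),
\end{equation*}
with $C_\infty=1+2C_d\|\nabla\bu\|_{L^\infty(D_T)}$. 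Integrating over $[0,\bar T]$, using $\hat{\bu}(\cdot,0)=\res_{tb}$, and applying Cauchy--Schwarz in time to pick up the $T^{1/2}$ factor on the divergence- and boundary-residual contributions, I then invoke the integral Gr\"onwall inequality and integrate once more over $\bar T\in[0,T]$. This produces the bound on $\er_G^2$ in terms of the $L^2$ norms of the four residuals with the prefactor $T+C_\infty T^2 e^{C_\infty T}$. Finally, replacing each $L^2$ norm of a residual by its quadrature approximation via \eqref{eq:hquad1}--\eqref{eq:hquad3}, with the relevant regularity constants absorbed into $C^{tb}_{quad}, C^{int,\bu}_{quad}, C^{int,div}_{quad}, C^{sb}_{quad}$, gives \eqref{eq:iegenb}.

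The main obstacle I anticipate is the careful handling of the pressure and divergence-residual terms: unlike the scalar Burgers case, we cannot eliminate the pressure via an exact divergence-free condition, so $\res_{div}$ appears both quadratically (through $\tfrac12\int \res_{div}|\hat{\bu}|^2$) and linearly (paired with $\hat p$), and similarly for $\res_{sb}$ on the boundary. Controlling these requires the uniform $L^\infty$ bounds on the velocities and pressures that make up $C_0$, and it is precisely the need to bound $\hat{\bu}$ and $\hat p$ in $L^\infty$ --- rather than just in $L^2$ --- that forces the generalization error to be measured in $L^2$ while the constant $C_0$ absorbs stronger norms of the exact and approximate solutions.
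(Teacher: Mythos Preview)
Your proposal is correct and follows essentially the same energy-method argument as the paper: derive the error PDE, take the $L^2(D)$ inner product with $\hat{\bu}$, integrate the transport and pressure terms by parts so that the inexact divergence and boundary conditions produce $\res_{div}$ and $\res_{sb}$ contributions controlled via $C_0$, bound the stretching term $\int_D \hat{\bu}\cdot((\hat{\bu}\cdot\nabla)\bu)$ by $C_d\|\nabla\bu\|_{L^\infty}\|\hat{\bu}\|_{L^2}^2$, apply Gr\"onwall and a further time integration, and finally swap the residual integrals for their quadrature approximations. The only cosmetic difference is that the paper splits the convective nonlinearity into three pieces $(\hat{\bu}\cdot\nabla)\hat{\bu}+(\bu\cdot\nabla)\hat{\bu}+(\hat{\bu}\cdot\nabla)\bu$ and notes that the $\bu$-transport term integrates to zero, whereas you group the first two as $(\bu^\ast\cdot\nabla)\hat{\bu}$ and integrate by parts directly against $\div(\bu^\ast)=\res_{div}$ and $\bu^\ast\cdot\nl=\res_{sb}$; the two routes are algebraically identical.
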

\begin{proof}
We will drop explicit dependence of all quantities on the parameters $\theta^{\ast}$ for notational convenience. We denote the difference between the underlying solution $\bu$ of \eqref{eq:ie} and PINN $\bu^{\ast}$ as $\hat{\bu} = \bu^{\ast} - \bu$. Similarly $\hat{p} = p^{\ast}-p$. Using the PDE \eqref{eq:ie} and the definitions of the residuals \eqref{eq:ires1}-\eqref{eq:ires4}, a straightforward calculation yields the following PDE for the $\hat{\bu}$,
\begin{equation}
    \label{eq:iehat}
    \begin{aligned}
    \hat{\bu}_t + \left(\hat{\bu} \cdot \nabla\right)\hat{\bu} + \left(\bu \cdot \nabla\right)\hat{\bu} + \left(\hat{\bu} \cdot \nabla\right)\bu+ \nabla \hat{p} &= \res_{\bu}, \quad (x,t) \in D \times (0,T), \\
    {\rm div}(\hat{\bu}) &= \res_{div}, \quad (x,t) \in D \times (0,T), \\
    \hat{\bu}\cdot\nl &= \res_{sb}, \quad (x,t) \in \bD \times (0,T), \\
    \bu(x,0) &= \res_{tb}, \quad x \in D.
    \end{aligned}
\end{equation}
We take a inner product of the first equation in \eqref{eq:iehat} with the vector $\hat{\bu}$ and use the following vector identities,
\begin{align*}
    \hat{\bu}\cdot \partial_t \hat{\bu} &= \partial_t \left(\frac{\|\hat{\bu}\|^2}{2}\right), \\
    \hat{\bu} \cdot \left(\left(\hat{\bu} \cdot \nabla\right)\hat{\bu}\right) &= \left(\hat{\bu} \cdot \nabla\right)\left(\frac{\|\hat{\bu}\|^2}{2}\right), \\
    \hat{\bu} \cdot \left(\left(\bu \cdot \nabla\right)\hat{\bu}\right) &= \left(\bu \cdot \nabla\right)\left(\frac{\|\hat{\bu}\|^2}{2}\right), 
\end{align*}
yields the following identity,
\begin{align*}
\partial_t \left(\frac{\|\hat{\bu}\|^2}{2}\right) +     \left(\hat{u} \cdot \nabla\right)\left(\frac{\|\hat{\bu}\|^2}{2}\right)
+\left(\bu \cdot \nabla\right)\left(\frac{\|\hat{\bu}\|^2}{2}\right) +
\hat{\bu} \cdot \left(\left(\hat{\bu} \cdot \nabla\right)\bu\right)+\left(\hat{\bu} \cdot \nabla\right)\hat{p} = \hat{\bu}\cdot\res_{\bu}.
\end{align*}
Integrating the above identity over $D$ and integrating by parts, together with \eqref{eq:ie} and \eqref{eq:iehat} yields,
\begin{equation}
    \label{eq:ipf1}
\begin{aligned}
    \frac{d}{dt} \int_D \left(\frac{\|\hat{\bu}\|^2}{2}\right) dx &= \int_{D} \res_{div} \left(\frac{\|\hat{\bu}\|^2}{2}+ \hat{p}\right) dx - \int_{\bD} \res_{sb}\left(\frac{\|\hat{\bu}\|^2}{2}+ \hat{p}\right) ds(x) \\
    &-\int_D \hat{\bu} \cdot \left(\left(\hat{\bu} \cdot \nabla\right)\bu\right) dx + \int_D\hat{\bu}\cdot\res_{\bu} dx.  
\end{aligned}
\end{equation}
It is straightforward to obtain the following inequality, 
\begin{align*}
     \int_D \hat{\bu} \cdot \left(\left(\hat{\bu} \cdot \nabla\right)\bu\right) dx \leq C_d \|\nabla \bu\|_{\infty} \int_D \|\hat{\bu}\|^2 dx,
\end{align*}
with the constant $C_d$ only depending on dimension and $\|\bu\|_{\infty} = \|\bu\|_{L^{\infty}(D_T)}$.

Using the above estimate and estimating \eqref{eq:ipf1} yields,
\begin{equation}
    \label{eq:ipf2}
    \frac{d}{dt} \int_D \|\hat{\bu}\|^2 dx \leq C_0 \left[ \left(\int_D \left(\res_{div}\right)^2 dx\right)^{\frac{1}{2}} + \left(\int_{\bD} \left(\res_{sb}\right)^2 ds(x)\right)^{\frac{1}{2}}\right] + C_{\infty}  \int_D \|\hat{\bu}\|^2 dx + \int_D \res^2_{\bu} dx,
\end{equation}
with constants given by \eqref{eq:cons}.

For any $\bar{T} \leq T$, we integrate \eqref{eq:ipf2} over time and use some simple inequalities to obtain,
\begin{equation}
    \label{eq:ipf3}
    \begin{aligned}
    \int_D \|\hat{\bu}(x,\bar{T})\|^2 dx &\leq \Cont + C_{\infty}  \int_0^{\bar{T}}\int_D \|\hat{\bu}(x,t)\|^2 dx dt, \\
    \Cont&= \int_D \res_{tb}^2 dx + \int_0^{T} \int_D \res^2_{\bu} dx dt, \\
     &+ C_0T^{\frac{1}{2}}\left[ \left(\int_0^T\int_D \left(\res_{div}\right)^2 dxdt \right)^{\frac{1}{2}} + \left(\int_0^T\int_{\bD} \left(\res_{sb}\right)^2 ds(x)dt \right)^{\frac{1}{2}}\right] .
    \end{aligned}
\end{equation}
Now by using the integral form of the Gr\"onwall's inequality in \eqref{eq:ipf3} and integrating again over $[0,T]$ results in,
\begin{equation}
    \label{eq:ipf4}
    \er_G^2 \leq \left(T+C_{\infty}T^2e^{C_{\infty}T}\right)\Cont.
\end{equation}
Using the bounds \eqref{eq:hquad1}-\eqref{eq:hquad3} on the quadrature errors and the definition of $\Cont$ in \eqref{eq:ipf3}, we obtain,
\begin{align*}
    \Cont &\leq \sum\limits_{n=1}^{N_{tb}} w^{tb}_n|\res_{tb}(x_n)|^2 + C^{tb}_{qaud}\left(\|\res_{tb}\|_{C^k}\right) N_{tb}^{-\alpha_{tb}} \\
    &+ \sum\limits_{n=1}^{N_{int}} w^{int}_n|\res_{\bu}(x_n,t_n)|^2 + C^{int}_{qaud}\left(\|\res_{\bu}\|_{C^{k-1}}\right) N_{int}^{-\alpha_{int}}, \\
    &+ C_0T^{\frac{1}{2}} \left[\left(\sum\limits_{n=1}^{N_{int}} w^{int}_n|\res_{div}(x_n,t_n)|^2\right)^{\frac{1}{2}} +  \left(C^{int}_{qaud}\left(\|\res_{div}\|_{C^{k-1}}\right)\right)^{\frac{1}{2}} N_{int}^{-\frac{\alpha_{int}}{2}} \right] \\
    &+ C_0T^{\frac{1}{2}} \left[\left(\sum\limits_{n=1}^{N_{sb}} w^{sb}_n|\res_{sb}(x_n,t_n)|^2\right)^{\frac{1}{2}} +  \left(C^{sb}_{qaud}\left(\|\res_{sb}\|_{C^{k}}\right)\right)^{\frac{1}{2}} N_{sb}^{-\frac{\alpha_{sb}}{2}} \right]
\end{align*}
From definition of training errors \eqref{eq:ietrain} and \eqref{eq:ipf4} and the above inequality, we obtain the desired estimate \eqref{eq:iegenb}.

\end{proof}
\begin{remark}
The bound \eqref{eq:iegenb} explicitly requires the existence of a classical solution $\bu$ to the incompressible Euler equations, with a minimum regularity of $\nabla \bu \in L^{\infty}(D \times (0,T))$. Such solutions do exist as long as we consider the incompressible Euler equations in \emph{two space dimensions} and with sufficiently smooth initial data \cite{MBbook}. However, in three space dimensions, even with smooth initial data, the existence of smooth solutions is a major open question. It is possible that the derivative blows up and the constant $C_{\infty}$ in \eqref{eq:iegenb} is unbounded, leading to a loss of control on the generalization error. In general, complicated solutions of the Euler equations are characterized by strong vorticity, resulting in large values of the spatial derivative. The bound \eqref{eq:iegenb} makes it clear that the generalization error with PINNs can be large for such problems.  
\end{remark}
\subsection{Numerical Experiments}
We present experiments for the incompressible Euler equations in two space dimensions, i.e $d=2$ in \eqref{eq:ie}. Moreover, we will use the \emph{CELU} function, given by,
\begin{equation}
\label{eq:celu}
    CELU(x)=\max(0,x)+\min\big(0,\exp(x)-1\big),
\end{equation}
as the activation function $\sigma$ in \eqref{eq:ann1}, \cite{barron2017continuously}. The CELU function results in better approximation than the hyperbolic tangent, for the Euler equations.
\begin{figure}[h!]
    \begin{subfigure}{.49\textwidth}
        \centering\captionsetup{width=.775\linewidth}
        \includegraphics[width=1\linewidth]{{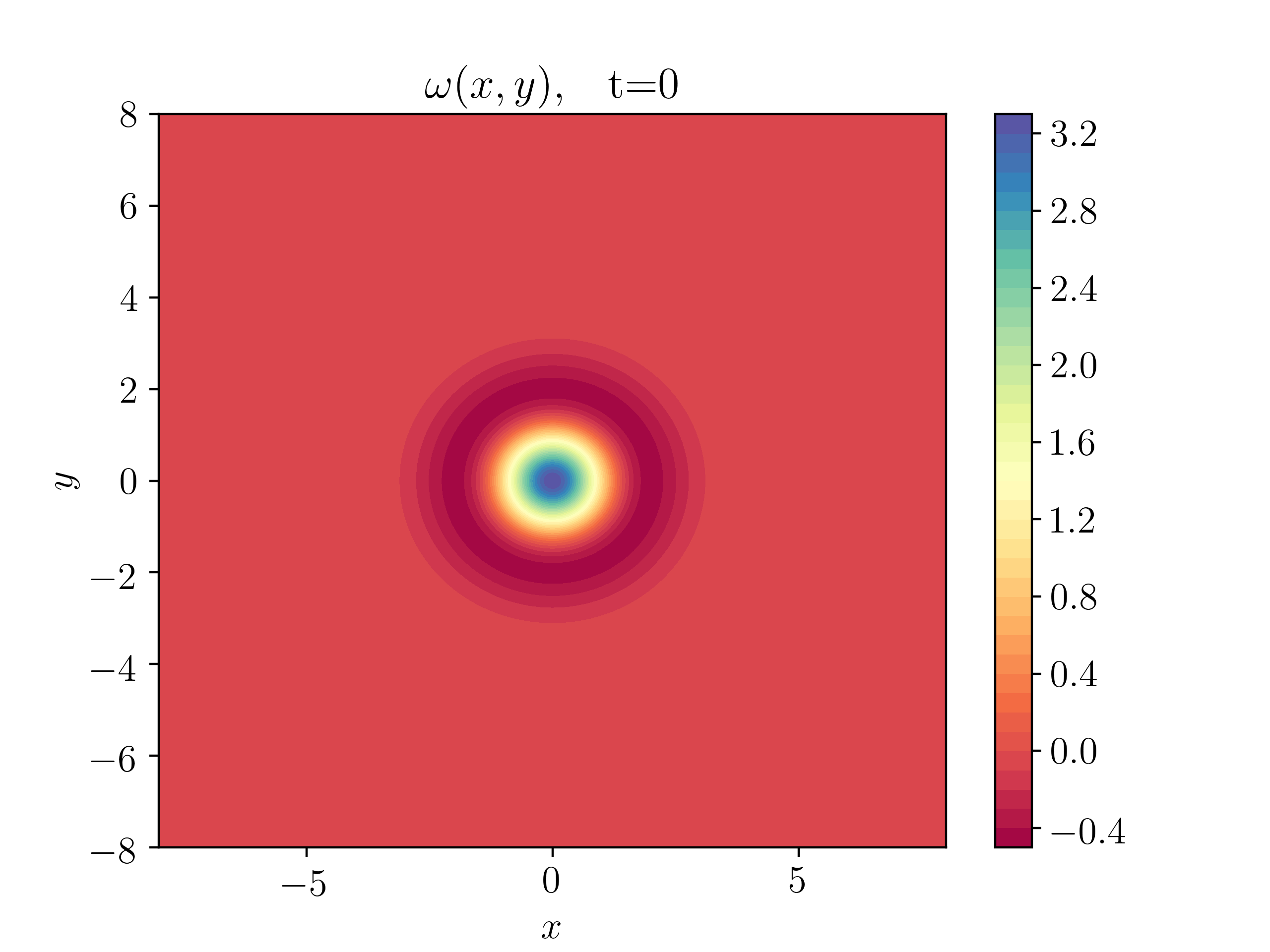}}
        \caption{Exact vorticity at $T=0$}
    \end{subfigure}
    \begin{subfigure}{.49\textwidth}
        \centering\captionsetup{width=.775\linewidth}
        \includegraphics[width=1\linewidth]{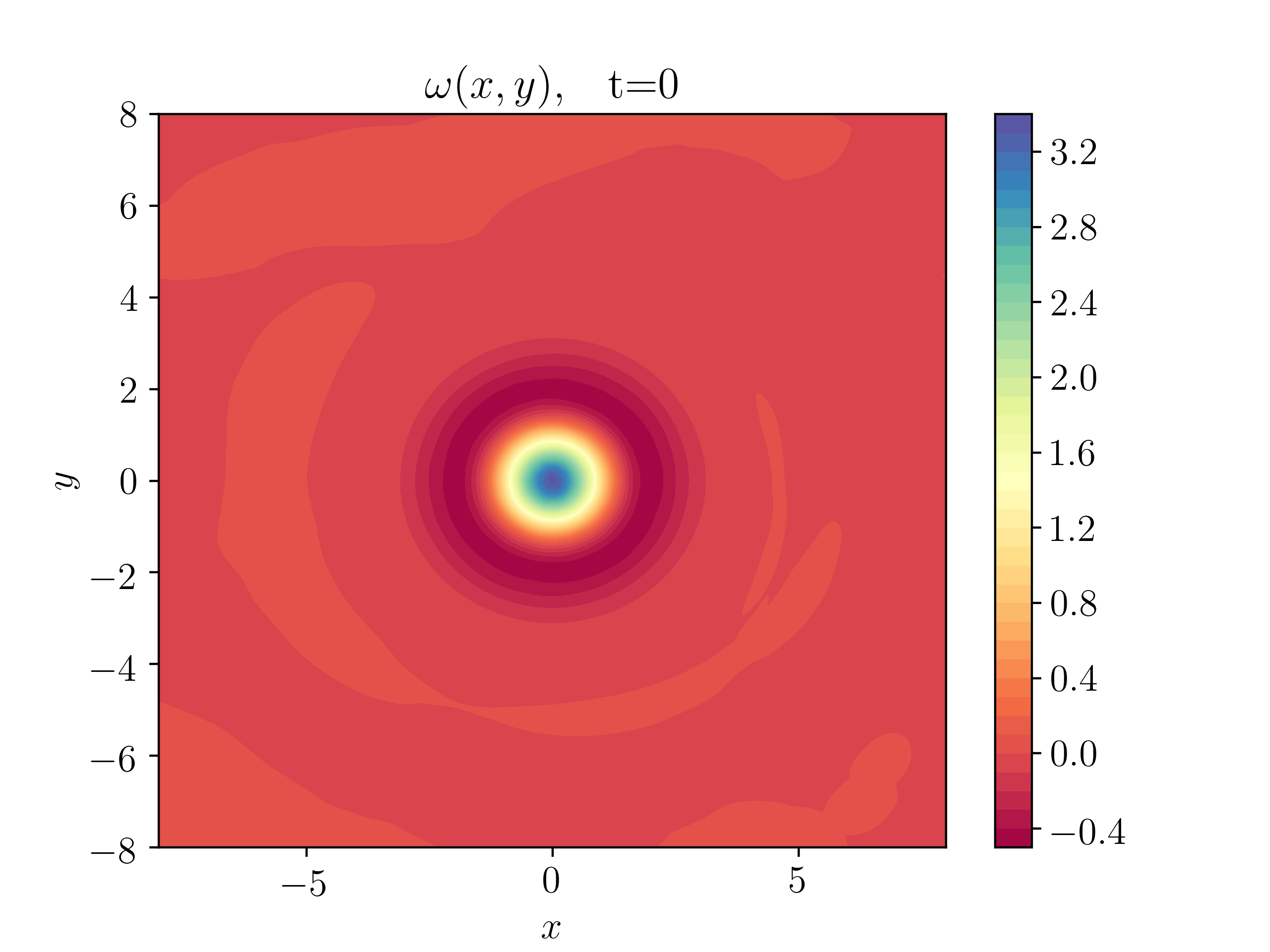}
        \caption{Approximate (PINN) vorticity at $T=0$}
    \end{subfigure}
    
    \begin{subfigure}{.49\textwidth}
        \centering\captionsetup{width=.775\linewidth}
        \includegraphics[width=1\linewidth]{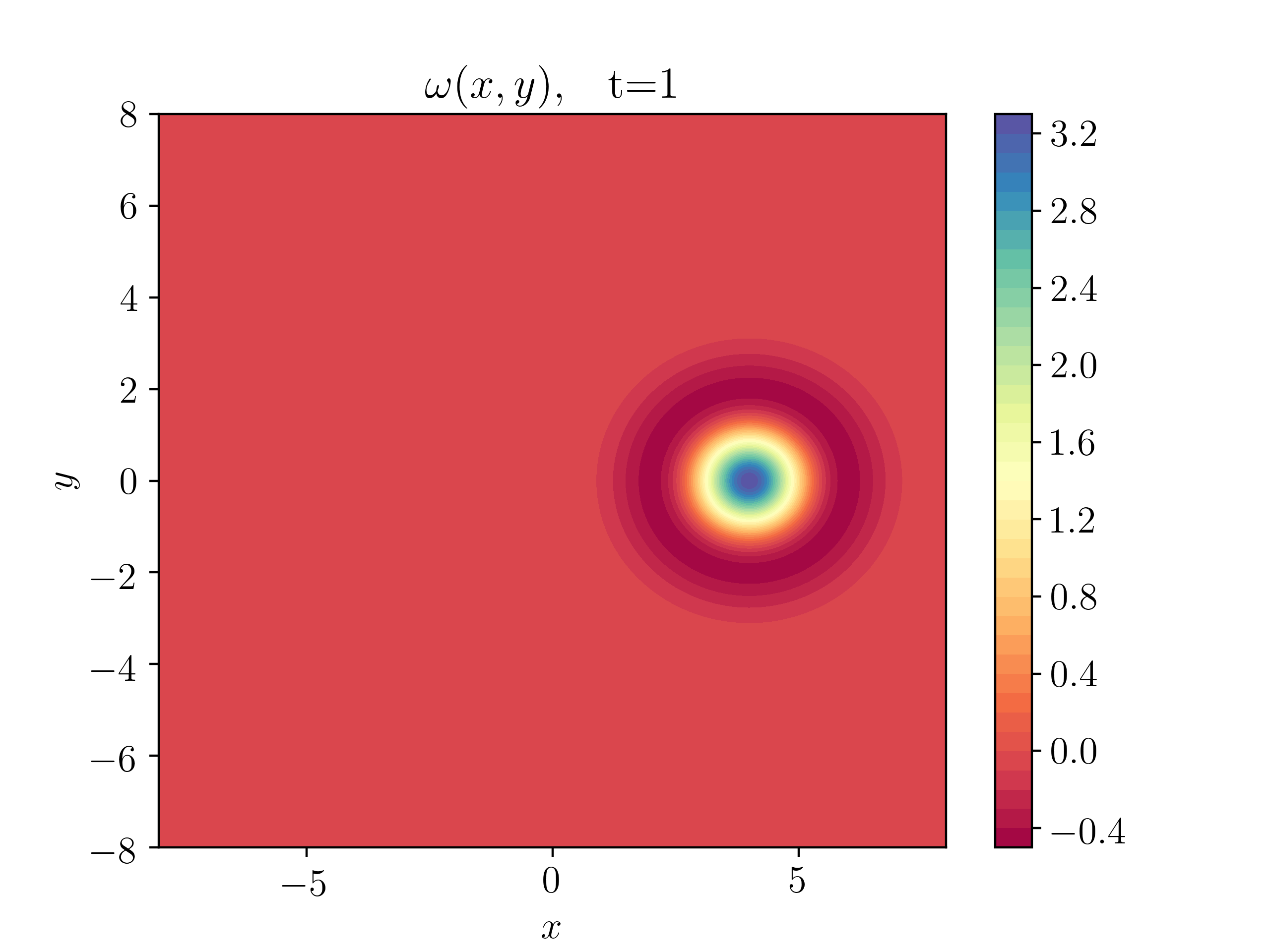}
        \caption{Exact vorticity at $T=1$}
    \end{subfigure}
    \begin{subfigure}{.49\textwidth}
        \centering\captionsetup{width=.775\linewidth}
        \includegraphics[width=1\linewidth]{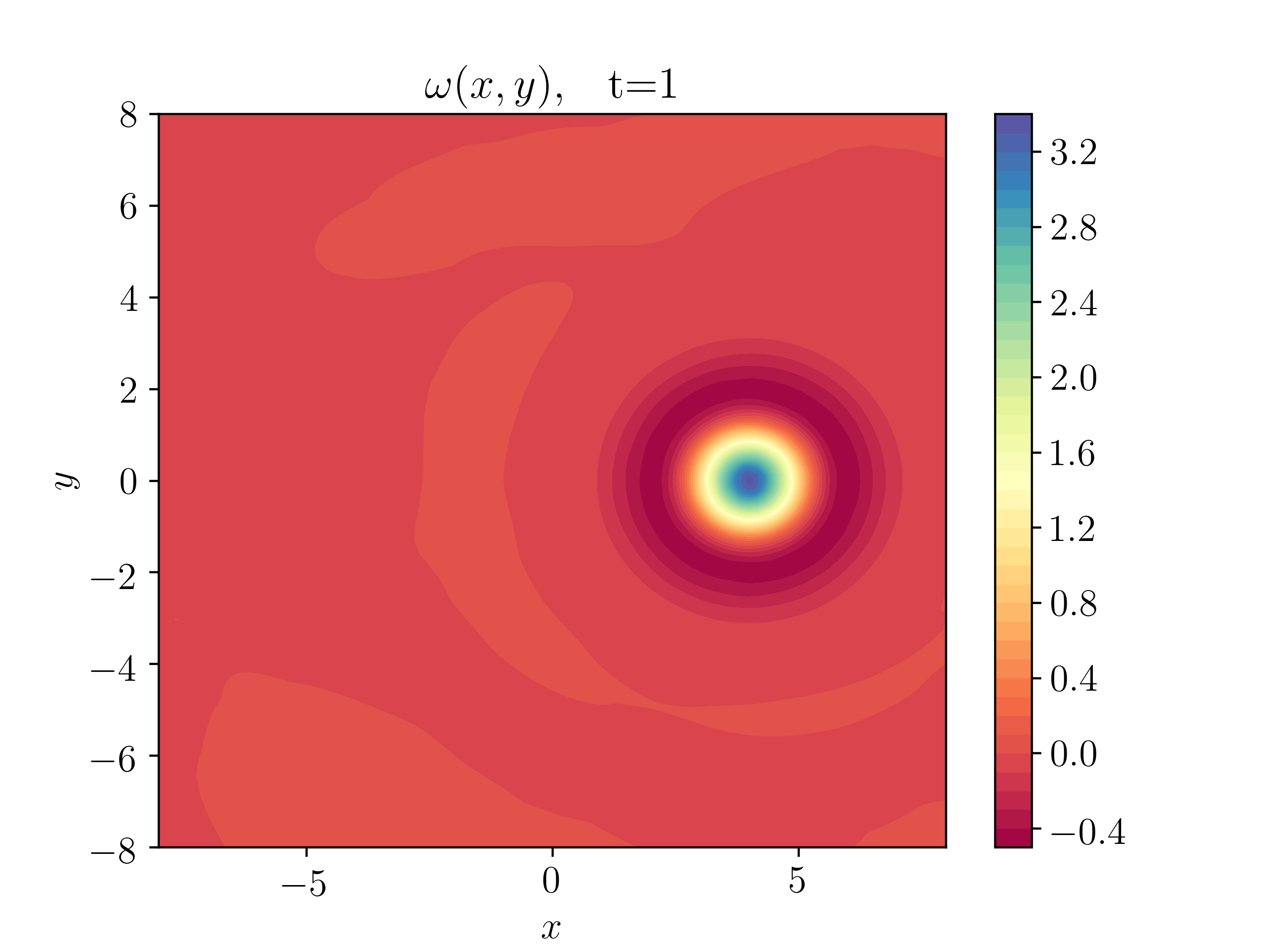}
        \caption{Approximate (PINN) vorticity at $T=1$}
    \end{subfigure}
    \caption{Exact and PINN solutions to the Taylor Vortex}
    \label{fig:euler1}
\end{figure}
\subsubsection{Taylor Vortex}
In the first numerical experiment, we consider the well-known Taylor Vortex, in a computational domain $D = [-8,8]^2$ with periodic boundary conditions and with the initial conditions, 
\begin{equation}
\label{eq:TV_ic}
\begin{aligned}
    u_0(x,y) &= -y e^{\frac{1}{2}(1-x^2-y^2)} + a_x,& \quad& (x,y)\in[-8,8]^2,\\
    v_0(x,y) &= xe^{\frac{1}{2}(1-x^2-y^2)} + a_y, & \quad& (x,y)\in[-8,8]^2,
\end{aligned}
\end{equation}
with $a_x=4$ and $a_y=0$.

In this case, one can obtain the following exact solution,
\begin{equation}
\label{eq:tv_exact}
\begin{aligned}
    u(t, x,y) &= -(y-a_yt) e^{\frac{1}{2}\big[1-(x-a_xt)^2-(y-a_yt)^2\big]}  + a_x,\\
    v(t, x,y) &= (x-a_xt)e^{\frac{1}{2}\big[1-(x-a_xt)^2-(y-a_yt)^2\big]}  + a_y.
\end{aligned}
\end{equation}

We will generate the training set with $N_{int}=8192$, $N_{tb} = N_{sb} = 256$ points, chosen as low-discrepancy Sobol sequences on the underlying domains. An ensemble training procedure is performed, as described in the previous section, and resulted in the hyperparameter configuration presented in Table \ref{tab:euler}. 

To visualize the solution, we follow standard practice and compute the vorticity $\omega = {\rm curl} (\bu)$ and present the exact vorticity and the one obtained from the PINN, generated by algorithm \ref{alg:PINN} in figure \ref{fig:euler1}. We remark that the vorticity can be readily computed from the PINN $\bu^{\ast}$ by automatic differentiation. We see from the figure, that the PINN, approximates the flow field very well, both initially as well as at later times, with small numerical errors. This good quality of approximation is further reinforced by the generalization error \eqref{eq:iegen}, computed from \eqref{eq:tv_exact} with $10^5$ uniformly distributed random points, and presented in Table \ref{tab:euler}. We see that the generalization error for the best hyperparameter configuration is only $0.012\%$, indicating very high accuracy of the approximation for this test problem. 
\begin{table}[htbp] 
    \centering
    \renewcommand{\arraystretch}{1.1} 
    
    \footnotesize{
        \begin{tabular}{ c c c c c c c c c c c} 
            \toprule
            \bfseries   &\bfseries $N_{int}$  & \bfseries $N_{sb}$& \bfseries $N_{tb}$  &\bfseries $K-1$ & \bfseries $d$  &\bfseries $L^1$-reg. &\bfseries $L^2$-reg. &\bfseries $\lambda$ &\bfseries $\er_T$&\bfseries $\er_G^r$ \\ 
            \midrule
            \midrule
            Taylor Vortex &8192   & 256 & 256 &12&24& 0.0& 0.0& 1 &0.0003& 0.012\% \\
            \midrule 
            Double Shear Layer           & 65536   & 16384 & 16384 &24&48& 0.0& 0.0& 0.1 &0.0025& 3.8\% \\

            \bottomrule
        \end{tabular}
    \caption{Best performing \textit{Neural Network} configurations for the Taylor Vortex and Double Shear Layer problem. Low-discrepancy Sobol points are used for every reported numerical example.}
        \label{tab:euler}
    }
\end{table}
\subsubsection{Double shear Layer}
We consider the two-dimensional Euler equations \eqref{eq:ie} in the computational domain $D = [0,2\pi]^2$ with periodic boundary conditions and consider initial data with the underlying vorticity, shown in figure \ref{fig:euler2} (Top Left). This vorticity, corresponds to a velocity field that has been evolved with a standard second-order finite difference projection method, with the well-known double shear layer initial data \cite{BCG1}, evolved till $T=1$. We are interested in determining if we can train a PINN to match the solution for later times. 

To this end, we acknowledge that the underlying solution is rather complicated (see figure \ref{fig:euler2} Top row) for the corresponding reference vorticity, and consists of fast moving sharp vortices. Moreover, the vorticity is high, implying from the bound \eqref{eq:iegenb}, that the generalization errors with PINNs can be high in this case. Hence, we consider training sets with larger number of points than the previous experiment, by setting $N_{int}=65536$ and $N_{tb}=N_{sb}=16384$. The ensemble training procedure resulted in hyperparameters presented in Table \ref{tab:euler}. 

We present the approximate vorticity computed with the PINN, together with the exact vorticity, in figure \ref{fig:euler2}, at three different times. From the figure, we see that the vorticity is approximated by the PINN quite well. However, the sharp vortices are smeared out and this is particularly apparent at later times. This is not surprising as the underlying solution is much more complicated in this case. Moreover, we have trained the PINN to approximate the velocity field, rather than the vorticity, and the generalization error \eqref{eq:iegen} is still quite low at $3.8 \%$ (see Table \ref{tab:euler}). 
\begin{figure}[h!]
    \begin{subfigure}{.3\textwidth}
        \centering\captionsetup{width=.775\linewidth}
        \includegraphics[width=1\linewidth]{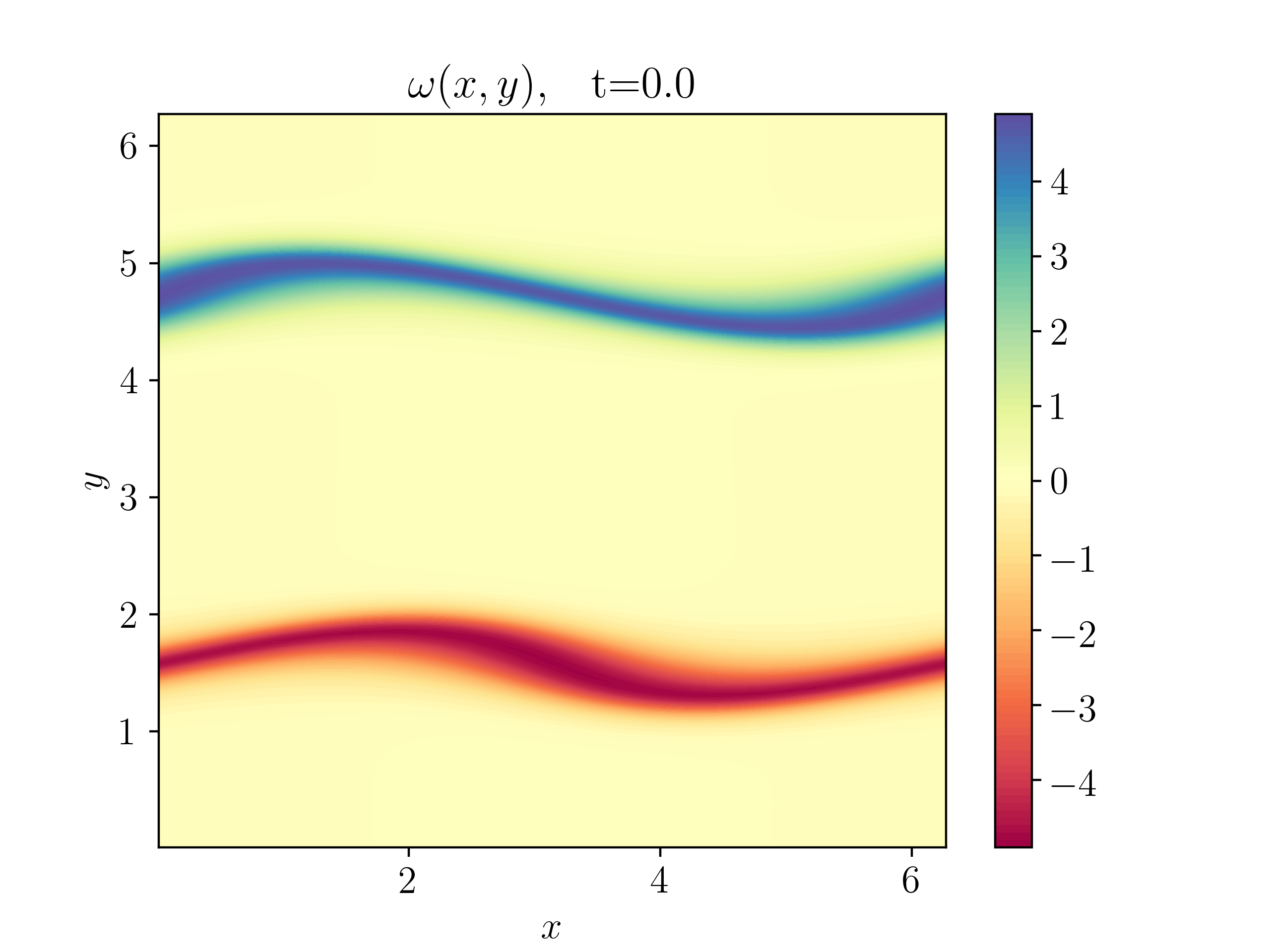}
        \caption{Reference, $T=0$}
    \end{subfigure}
    \begin{subfigure}{.3\textwidth}
        \centering\captionsetup{width=.775\linewidth}
        \includegraphics[width=1\linewidth]{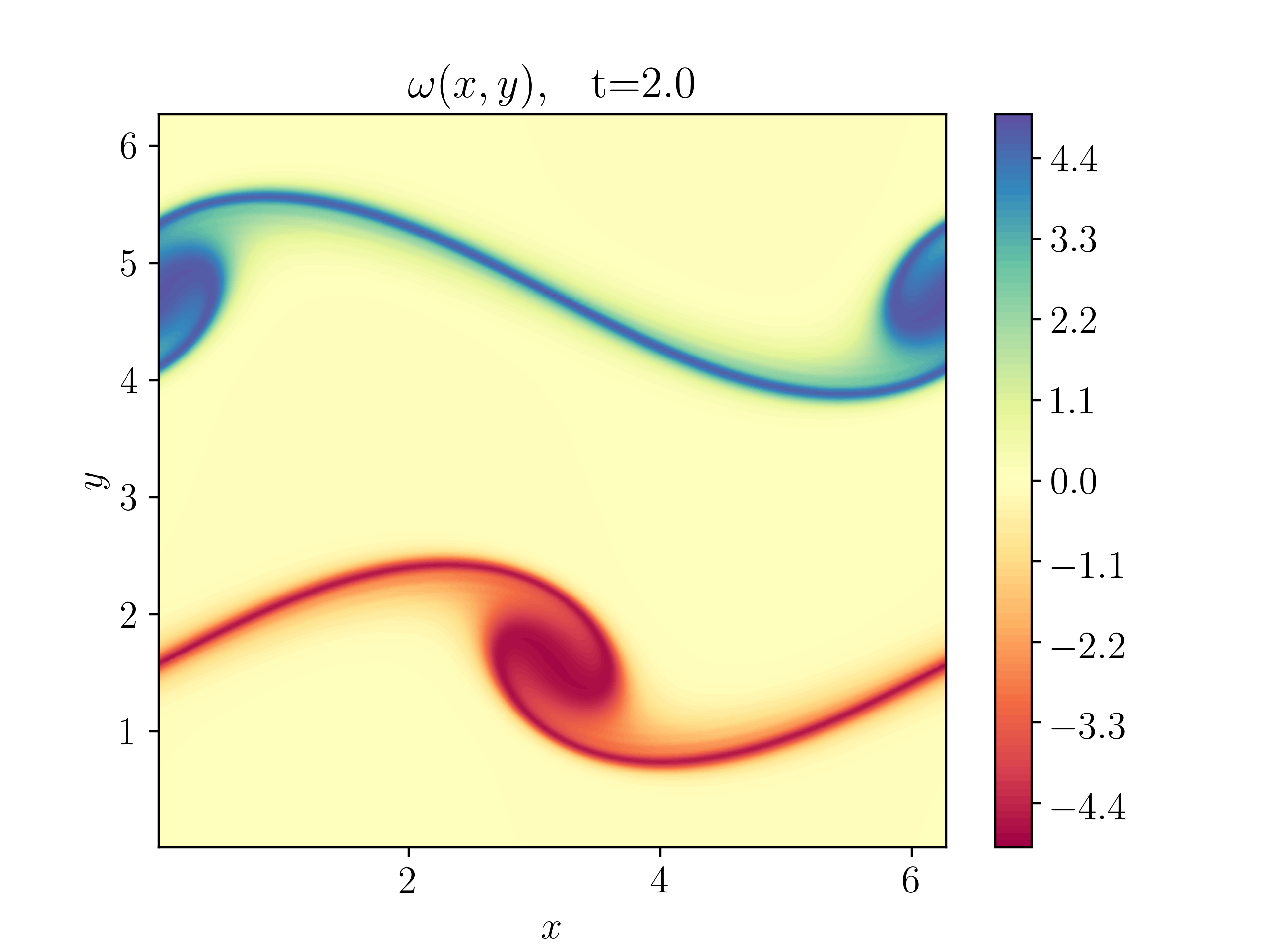}
        \caption{Reference, $T=2$}
    \end{subfigure}
    \begin{subfigure}{.3\textwidth}
        \centering\captionsetup{width=.775\linewidth}
        \includegraphics[width=1\linewidth]{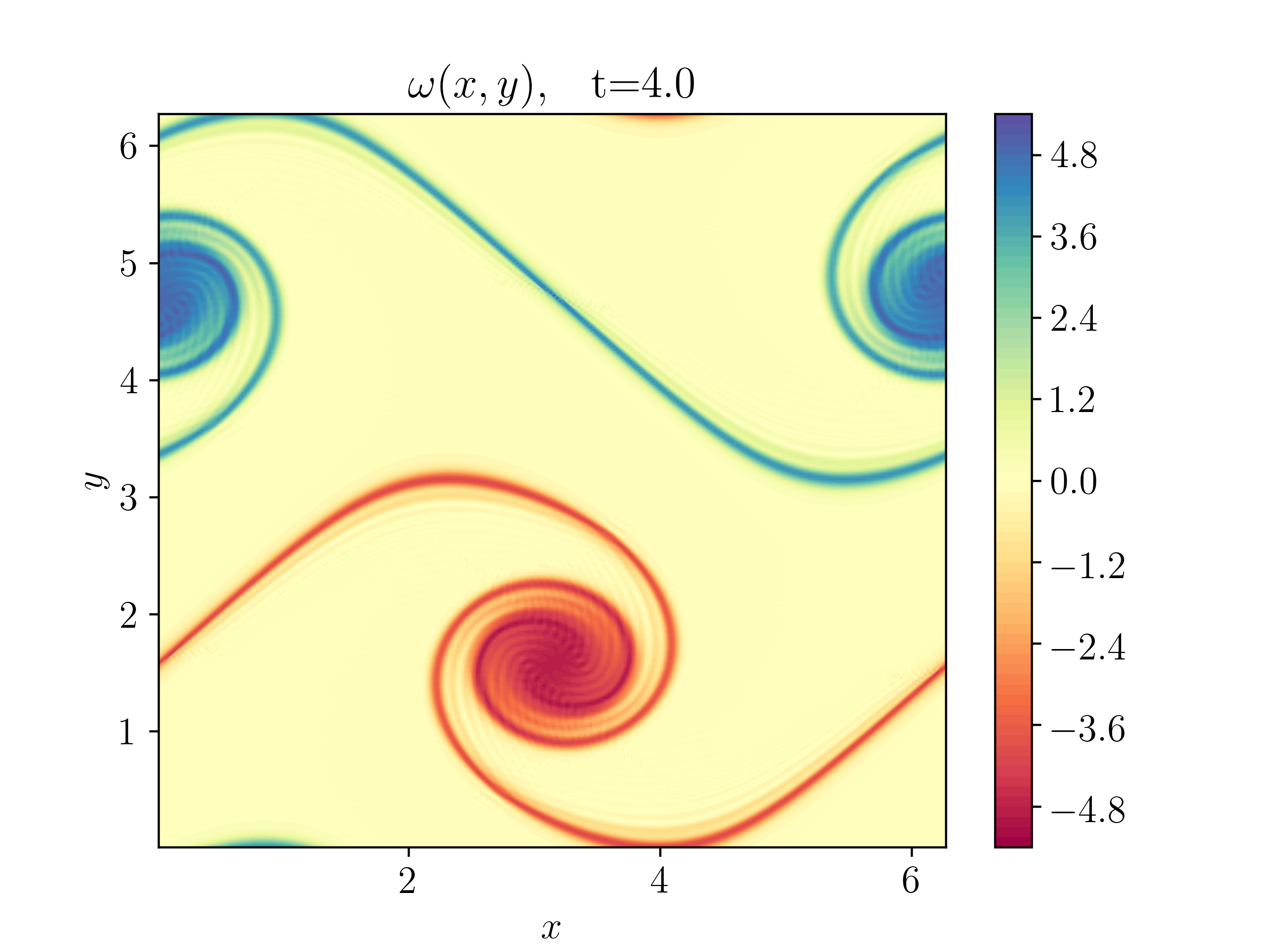}
        \caption{Reference, $T=4$}
    \end{subfigure}
    
    \begin{subfigure}{.3\textwidth}
        \centering\captionsetup{width=.775\linewidth}
        \includegraphics[width=1\linewidth]{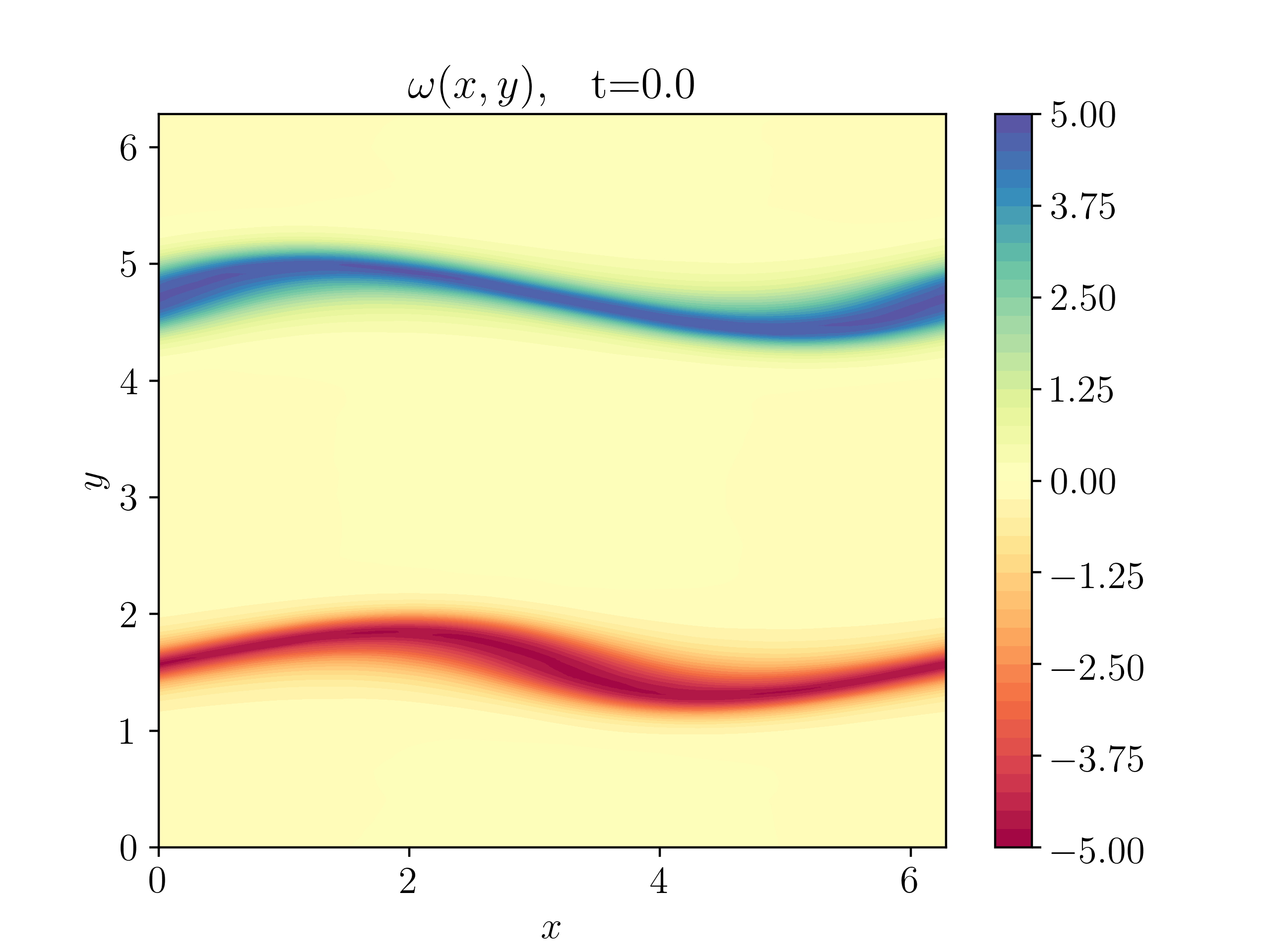}
        \caption{PINN, $T=0$}
    \end{subfigure}
    \begin{subfigure}{.3\textwidth}
        \centering\captionsetup{width=.775\linewidth}
        \includegraphics[width=1\linewidth]{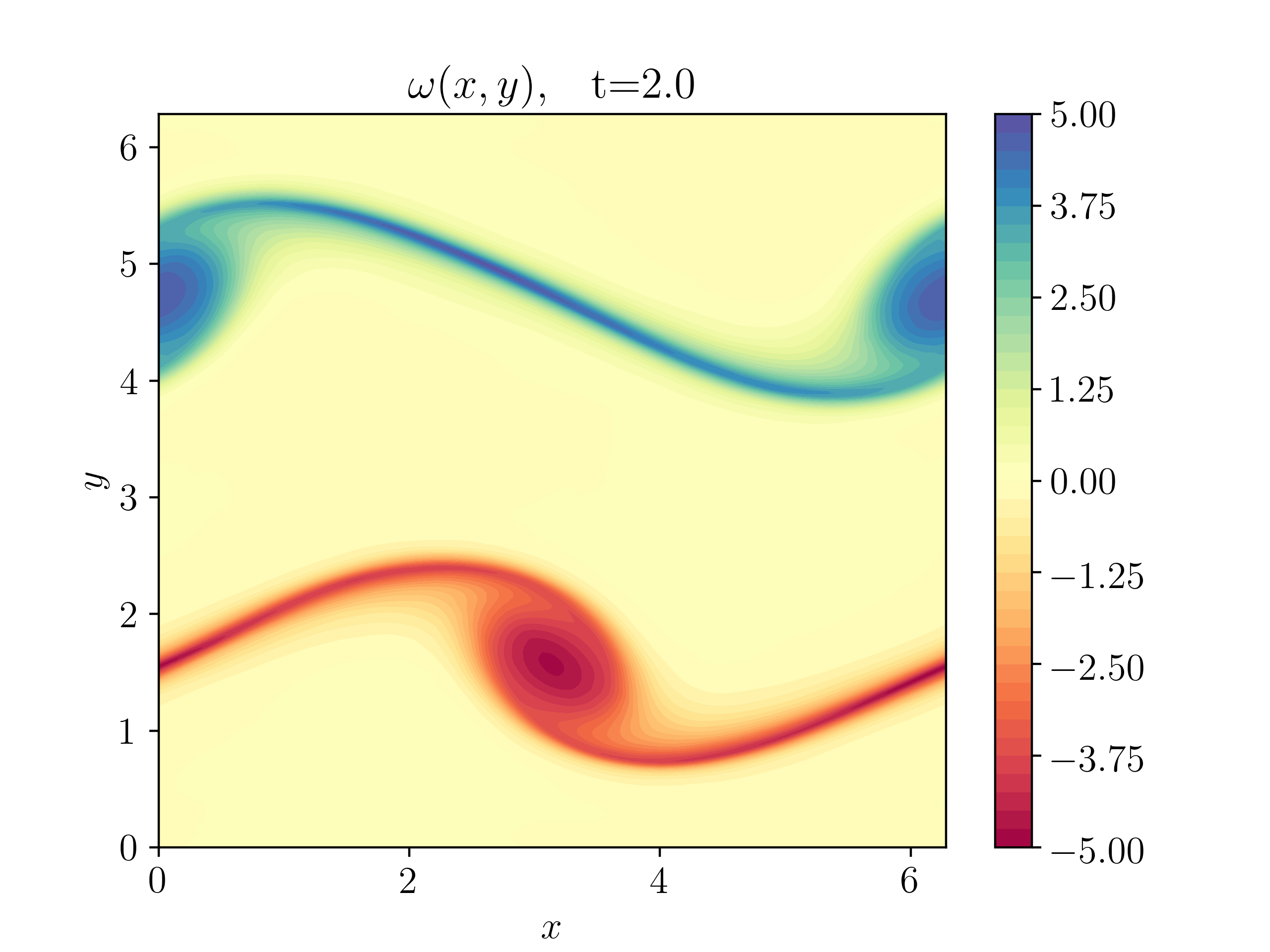}
        \caption{PINN, $T=2$}
    \end{subfigure}
    \begin{subfigure}{.3\textwidth}
        \centering\captionsetup{width=.775\linewidth}
        \includegraphics[width=1\linewidth]{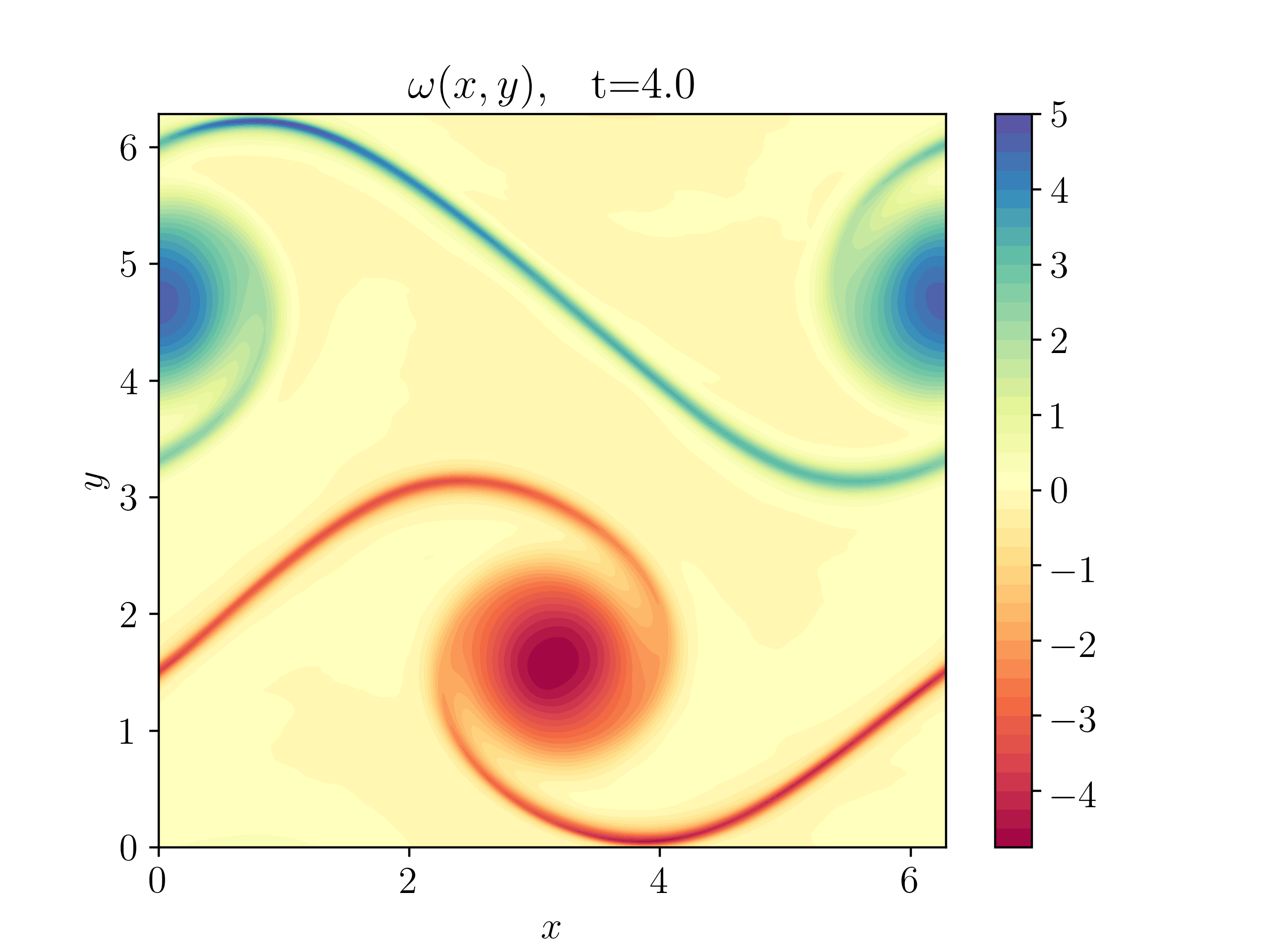}
        \caption{PINN, $T=4$}
    \end{subfigure}
\caption{Reference (Top Row) and PINN generated (Bottom Row) vorticities for the double shear layer problem at different times}
    \label{fig:euler2}
\end{figure}

\section*{Code} The building and the training of PINNs, together with the ensemble training for the selection of the model hyperparameters, are performed with a collection of Python scripts, realized with the support of PyTorch \url{https://pytorch.org/}. The scripts can be downloaded from \url{https://github.com/mroberto166/PinnsSub}.

\section{Discussion}
\label{sec:6}
Physics informed neural networks (PINNs), originally proposed in \cite{Lag1}, have recently been extensively used to numerically approximate solutions of PDEs in different contexts \cite{KAR1,KAR2,KAR4} and references therein. The main aim of this paper was to suggest possible explanations for this efficient approximation by PINNs. In particular, one needs to explain why minimization the PDE residual at collocation points will lead to bounds (control) on the overall approximation or generalization error.

In this article, we introduce an abstract framework, where an abstract nonlinear PDE \eqref{eq:pde} is approximated by a PINN, generated with the algorithm \ref{alg:PINN}. We repeat that the key point of this algorithm is to minimize the PDE residual \eqref{eq:res1}, at training points chosen as the quadrature points, corresponding to an underlying quadrature rule \eqref{eq:quad}. The resulting generalization error is bounded in the abstract error estimate \eqref{eq:egenb}, in terms of the training error, the number of training samples and involves constants that stem from a stability estimates \eqref{eq:assm2} for the underlying PDE as well as in the quadrature error. From our bound \eqref{eq:egenb}, one can conclude that as long as the training error is small, the number of quadrature points is sufficiently large and one has some control on the constants related to the stability of PDE and accuracy of the quadrature rule, the resulting generalization error will be small.  A key role in our error analysis is played by the stability of solutions of the underlying PDE to inputs that we leveraged to derive the abstract error bound \eqref{eq:egenb}. 

We illustrate our approach with three representative examples for PDEs, i.e, semilinear parabolic PDEs, viscous scalar conservation laws and the incompressible Euler equations of fluid dynamics. For each of these examples, the abstract framework was worked out and resulted in the bounds \eqref{eq:hegenb}, \eqref{eq:begenb}, \eqref{eq:iegenb} on the PINN generalization errors. All the bounds were of the form \eqref{eq:egenb}, in terms of the training and quadrature errors and with constants relying on stability (and regularity) estimates for \emph{classical} solutions of the underlying PDE. 

We also presented numerical experiments to validate the proposed theory. The numerical results were consistent with the derived estimates on the generalization error. For the heat equation, the results showed that PINNs are able to approximate the solutions accurately, even for very high ($100$) dimensional problems. For the viscous scalar conservation laws, we observed very accurate approximations with PINNs, for all values of the viscosity, as long as the underlying solution was at least Lipschitz continuous. However, as expected from the estimate \eqref{eq:begenb}, the accuracy deteriorated for the inviscid problem, when shocks were formed. Results with the two-dimensional incompressible Euler equations were also consistent with the derived error estimate \eqref{eq:iegenb}. 

With the exception of the very recent paper \cite{DAR1}, this article is one of the first rigorous investigations on approximations of PDEs by PINNs. Given that \cite{DAR1} also considers this theme, it is instructive to compare the two articles and highlight differences. In \cite{DAR1}, the authors focus on consistent approximation by PINNs by estimating the PINN loss in terms of the number of randomly chosen training samples and training error, corresponding to a \emph{loss function}, regularized with H\"older norms of PDE residual, (Theorem 2.1 of \cite{DAR1}). Under assumptions of vanishing training error and uniform bounds on the residual in H\"older spaces, the authors prove convergence of the resulting PINN to the classical solution of the underlying PDE as the number of training samples increase. They illustrate their abstract framework on linear elliptic and parabolic PDEs. 

Although similar in spirit, there are major differences in our approach compared to that of \cite{DAR1}. First, we do not need any additional regularization terms and directly estimate the generalization error from the stability estimate \eqref{eq:assm2} and loss function \eqref{eq:lf2}. Second, given the abstract formalism of section \ref{sec:2}, in particular the error estimate \eqref{eq:egenb}, we can cover very general PDEs, including non-linear PDEs.  In fact, any PDE with a stability estimate of the form \eqref{eq:assm2} is covered by our approach. We believe that this provides a unified explanation, in terms of stability estimates and quadrature rules, for the robust performance of PINNs for approximating a large number of linear and non-linear PDEs.

Finally, our current work has certain advantages as well as limitations and forms the foundation for the following extensions,
\begin{itemize}
    \item We provided a very abstract framework for deriving error estimates for PINNs here and illustrated this approach for some representative linear and non-linear PDEs. Given the generality and universality of the proposed formalism, it is possible to extend the estimates for approximating a very wide class of PDEs that satisfy stability estimates of the form \eqref{eq:assm2} and have classical solutions. In addition to the examples here, such PDEs include all well-known linear PDEs such as linear elliptic PDEs, wave equations, Maxwell's equations, linear elasticity, Stokes equations, Helmholtz equation etc as well as nonlinear PDEs such as Schr\"odinger equations, semilinear elliptic equations, dispersive equations such as KdV and many others. Extension of PINNs to these equations can be performed readily. 
    \item A major limitation of the estimate \eqref{eq:egenb} on the generalization error lies in the fact that the rhs in this estimate involves the training error \eqref{eq:train}. We are not able to estimate this error rigorously. However, this is standard in machine learning \cite{MLbook2}, where the training error is computed \emph{a posteriori} (see the numerical results for training errors in respective experiments). The training error stems from the solution of a non-convex optimization problem in very high dimensions and there are no robust estimates on this problem. Once such estimates become available, they can be readily incorporated in the rhs of \eqref{eq:egenb}. For the time being, the estimate \eqref{eq:egenb} should be interpreted as \emph{as long as the PINN is trained well, it generalizes well}. This is also extensively verified in the presented numerical experiments.
    \item Although never explicitly assumed, stability estimates of the abstract form \eqref{eq:assm2}, at least for nonlinear PDEs, amounts to regularity assumptions on the underlying solutions. For instance, the subspace $Z$ in \eqref{eq:assm2} is often a space of functions with sufficiently high Sobolev (or H\"older) regularity. The issue of regularity of the underlying PDE solutions is brought to the fore in the numerical example (see figure \ref{fig:burg1} and table \ref{tab:burg}) for the viscous Burgers' equation. Here, we clearly need the underling PDE solution to be Lipschitz continuous, for the PINN to appproximate it. Can PINNs approximate rough solutions of PDEs? The techniques introduced here may not suffice for this purpose and non-trivial extensions are needed.
    \item Another key implicit assumption in our error bounds was on the dependence of the constants $C_{pde}$ and $C_{quad}$ on the underlying PINN. Hence, these constants can also depend on the number of training points $N$. In particular, $C_{quad}$ in \eqref{eq:egenb} can change with changing $N$. In general, one needs to estimate the constants $C_{pde}$ and $C_{quad}$ for each specific PDE (and the underlying quadrature rule) in terms of the weights of the resulting neural networks. These detailed calculations are out of scope of the current paper and we refer the interested reader to \cite{DRM1} for detailed calculations of these constants in the case of linear Kolmogorov PDEs. Nevertheless, in our numerical experiments, we consistently observe that these constants are controlled in practice and the training and generalization errors are highly correlated (see figure \ref{fig:tg} for an illustration). This correlation also provides users with a practical criteria to select PINNs during ensemble training as low training error is a good indication of low generalization error.  
    \item Finally, we have focused on the forward problems for PDEs in this paper. One of the most attractive features of PINNs is their ability to solve Inverse problems \cite{KAR2,KAR4,KAR6}, with the same computational costs and efficiency as the forward problem. We prove generalization error estimates for PINNs, approximating inverse problems for PDEs in the companion paper \cite{MM2}.

\end{itemize}

\section*{Acknowledgements.} The research of SM and RM was partially supported by European Research Council Consolidator grant ERCCoG 770880: COMANFLO.

\bibliographystyle{abbrv}
\bibliography{MMpaperI}

\end{document}